\newcommand{\C}{\mathbb C}
\newcommand{\Z}{\mathbb Z}
\newcommand{\N}{\mathbb N}
\newcommand{\F}{\mathbb F}
\newcommand{\E}{\mathbb E}
\newcommand{\Q}{\mathbb Q}
\newcommand{\R}{\mathbb R}
\newcommand{\PP}{\mathbb P}
\newcommand{\HH}{\mathbb H}
\newcommand{\sma}{\left(\begin{array}}
\newcommand{\fma}{\end{array}\right)}
\newcommand{\ep}{\epsilon}
\newtheorem{lem}{Lemma}[section]
\newtheorem{defn}[lem]{Definition}
\newtheorem{co}[lem]{Corollary}
\newtheorem{thm}[lem]{Theorem}
\newtheorem{prop}[lem]{Proposition}
\newenvironment{proof}{\textbf{Proof.}}{\newline\hspace*{\fill}{$\Box$}\\}
\begin{document}
\title{Properties of linear groups with restricted
unipotent elements}
\author{J.\,O.\,Button}

\newcommand{\Address}{{
  \bigskip
  \footnotesize

\textsc{Selwyn College, University of Cambridge,
Cambridge CB3 9DQ, UK}\par\nopagebreak
  \textit{E-mail address}: \texttt{j.o.button@dpmms.cam.ac.uk}
}}

\date{}
\maketitle
\begin{abstract}
We consider linear groups which do not contain unipotent elements
of infinite order, which includes all linear groups 
in positive characteristic, and show that this class of groups
has good properties which resemble those held by groups of  non positive
curvature and which do not hold for arbitrary 
characteristic zero linear groups.
In particular if such a linear group is
finitely generated then centralisers virtually split and all finitely
generated abelian subgroups are undistorted. If further the group
is virtually torsion free (which always holds in characteristic zero)
then we have a strong property on small subgroups: any subgroup either
contains a non abelian free group or is finitely generated and 
virtually abelian, hence also undistorted.
We present applications, including that the mapping
class group of a surface having genus at least 3
has no faithful linear representation which is complex
unitary or over any field of positive characteristic.
\end{abstract}

\section{Introduction}

Knowing that an abstract group $G$ is actually linear (here meaning
that $G$ embeds in the general linear group $GL(d,\F)$ for some
$d$ and field $\F$ of arbitrary characteristic) is, or at least
should be, an indication that $G$ is a well behaved group.
For instance if $G$ is also finitely generated then it is residually
finite (but this is not true of linear groups in general), whereas
the Tits alternative states that if $\F$ has characteristic zero, or if
again $G$ is finitely generated (but not necessarily for arbitrary
linear groups in positive characteristic) then $G$ either contains a
non abelian free subgroup or is a virtually solvable group. In fact
if $G$ is finitely generated and linear in any characteristic then
all subgroups $S$ of $G$, whether finitely generated or not,
also satisfy this alternative in that $S$ either contains the non abelian
free group $F_2$ or is virtually solvable (this is outlined in Section 4).

Now on sticking to the case where $G$ is both linear and finitely
generated, we can try and go further with
those subgroups $S$ of $G$ that do not contain $F_2$ by asking whether 
$S$ has to be finitely generated and also
whether $S$ is always virtually polycyclic, virtually nilpotent or
virtually abelian. Moreover we can think about the geometry of the embedding
in the cases where $S$ does turn out to be finitely generated and ask
if $S$ is undistorted in $G$. For positive results in a different
context, suppose we take any word
hyperbolic group $\Gamma$. In this case $\Gamma$ will be finitely generated,
indeed finitely presented, but it need not be linear over any field.
However we have a very strong property held by any ``small'' subgroup
of $\Gamma$, namely that if $S\leq \Gamma$ is a subgroup not containing
$F_2$ then $S$ is virtually cyclic and is
undistorted in $\Gamma$.

In this paper we will
certainly not want to eliminate groups $G$ containing $\Z^2$, so the correct
analogy here
for the ``strong behaviour of small subgroups'' seen in the
word hyperbolic case is that
every subgroup of $G$ either contains a non abelian free subgroup or
is virtually abelian, finitely generated and undistorted in $G$.   
However let us consider the following
examples, all of which are finitely generated linear groups 
or subgroups thereof.\\
\hfill\\
{\bf  Example 1.1}: The wreath product $\Z\,\wr\,\Z$, which can be
thought of as a semidirect product
$\left(\bigoplus_{i=-\infty}^{i=+\infty}\Z\right)\rtimes\Z$, is 
finitely generated
and a subgroup of $GL(2,\R)$ via the matrices
$\sma{cc}\pi&0\\0&1\fma$ and $\sma{cc}1&1\\0&1\fma$
which is credited to Philip Hall. In fact here we only use the
fact that $\pi$ is transcendental, so that this group is actually
linear in two dimensions over the field $\Q(t)$ for $t$ an arbitrary element
which is transcendental over $\Q$. Clearly this group is solvable, even
metabelian, but not virtually abelian and it contains the subgroup
$\bigoplus_{i=-\infty}^{i=+\infty}\Z$ which is abelian but not finitely
generated.\\
\hfill\\
{\bf Example 1.2}: We can do the same in any positive characteristic
$p$ to obtain the ``$p$-lamplighter group'' $C_p\,\wr\, \Z$, using the
matrices $\sma{cc}t&0\\0&1\fma$ and $\sma{cc}1&1\\0&1\fma$
over the field $\F_p(t)$. Again this is  a semidirect product
$\left(\bigoplus_{i=-\infty}^{i=+\infty}C_p\right)\rtimes\Z$ and has the
property that it is finitely generated and metabelian, but it
contains an abelian group which is torsion and not finitely
generated. Thus $C_p\,\wr\,\Z$ fails to be virtually torsion free.

We now consider distortion of cyclic and abelian subgroups. The next two
cases are by far the most commonly given examples of distorted infinite
cyclic subgroups.\\
\hfill\\
{\bf Example 1.3}: Consider the finitely presented Baumslag-Solitar
group\\ $BS(1,2)=\langle a,t\,|\,tat^{-1}=a^2\rangle$. This is also solvable
and metabelian and is even linear over $\Q$ using the matrices
$a=\sma{cc}1&1\\0&1\fma$ and  $t=\sma{cc}2&0\\0&1\fma$. 
Again it has an abelian subgroup which is not finitely generated. Moreover
as we clearly have $t^kat^{-k}=a^{2^k}$ for the infinite order element $a$,
the subgroup $\langle a\rangle$ is distorted in $BS(1,2)$. In fact the
behaviour is even worse because $BS(1,2)$ is an example of a group $G$
which is not balanced: namely there exists an element $a\in G$ of 
infinite order
such that $a^m$ is conjugate to $a^n$ but $|m|\neq |n|$ (whereupon
$\langle a\rangle$ is distorted in $G$ if $G$ is finitely generated). 
Such groups do not have good properties in general, for instance they
cannot be linear over $\Z$.\\
\hfill\\
{\bf Example 1.4}: Consider the Heisenberg group
$H=\langle x,y,z\,|\,z=[x,y],[z,x],$ $[z,y]\rangle$ 
(where $[x,y]=xyx^{-1}y^{-1}$) which is finitely
presented and nilpotent of class 2 (so every subgroup is also
finitely presented) but which is not virtually abelian. It is
linear over $\Z$ using the matrices
\[x=\sma{ccc}1&1&0\\0&1&0\\0&0&1\fma,
y=\sma{ccc}1&0&0\\0&1&1\\0&0&1\fma,
z=\sma{ccc}1&0&1\\0&1&0\\0&0&1\fma
\]
but as $z^{n^2}=x^{-n}y^{-n}x^ny^n$, we have that $\langle z\rangle$
is distorted in $H$. (Here $\langle z\rangle$ is said to
have polynomial distortion, whereas $\langle a \rangle$ in the previous
example has exponential distortion but in this paper we will not be concerned
with the type of distortion, rather just whether the subgroup is distorted
or not.)
Note also that if $S$ is any finite index subgroup of $H$ and we have
$l>0$ with $x^l,y^l\in S$ then $z^{l^2}$ is a commutator in $S$, so
no power of $z$ has infinite order in the abelianisation of $S$.\\
\hfill\\
{\bf Example 1.5}: Consider the semidirect product 
$S=\Z^2\rtimes_\alpha\Z=\langle a,b,t\rangle$
given by the automorphism $\alpha(a)=tat^{-1}=a^2b,\alpha(b)=
tbt^{-1}=ab$ of $\Z^2=\langle a,b\rangle$. This is polycyclic, 
so again every subgroup is finitely presented, but
not virtually nilpotent (and thus not virtually abelian). Moreover
$S$ is linear over $\Q(\phi)$, where $\phi$ is the golden ratio,
via the matrices
\[a=\sma{cc}1&1\\0&1\fma,b=\sma{cc}1&\phi-1\\0&1\fma,
t=\sma{cc}\phi+1&0\\0&1\fma.
\]

Now the abelian subgroup $\langle a,b\rangle\cong\Z^2$ is distorted
in $S$ because $t^nat^{-n}=a^{s_{2n+1}}b^{s_{2n}}$ for $s_n$ the $n$th
Fibonacci number. However every cyclic subgroup of $S$ is undistorted
(for instance by seeing $S$ as the fundamental group of a closed
Sol 3-manifold).

These examples, which were all in very small dimension and used basic fields, 
certainly demonstrate that small subgroups of finitely generated
linear groups need not have the strong behaviour that we see
elsewhere. However surely a striking
feature of all of these examples is that the ``trouble'' comes
from non identity matrices of the form
\[\sma{cc}1&?\\0&1\fma\mbox{ or }\sma{ccc}1&?&?\\0&1&?\\0&0&1\fma\]
which happen to be unipotent.

In this paper we will present two classes of 
linear groups, defined by restricting the unipotent matrices 
which are allowed to occur.
One of our classes contains the other but both have considerable
range in all dimensions and in any characteristic. Indeed the
more general class includes all linear groups in positive characteristic
and in zero characteristic it includes all real orthogonal
and complex unitary linear groups. The stricter class
is actually the same as the more general class in characteristic zero
whereas it certainly contains all linear groups in
positive characteristic which are virtually torsion free. We will show in 
Theorem \ref{undis} that if $G$ is a
finitely generated group in the more general class then every 
finitely generated virtually
abelian subgroup is undistorted in $G$, and in Corollary \ref{sbgp}
that a finitely generated
group in the stricter class has the ``small subgroups'' property
that every subgroup either contains $F_2$ or
is both finitely generated and virtually abelian
(thus also undistorted).

In fact the motivation for these results came not quite from
hyperbolic groups, but more from non positive curvature.
Our stricter class of linear groups, or at least the finitely
generated groups in this class, corresponds reasonably well to
the class of CAT(0) groups (those groups which act geometrically,
namely properly and cocompactly by isometries, on a CAT(0) space). 
Although neither class
in this correspondence contains the other, we will see that they
share some group theoretic properties. As for our wider class of linear
groups, in this paper they will be compared to 
groups $H$ having an isometric action on a CAT(0) metric
space $(X,d)$ where the action is both proper and, as a weakening
of being cocompact, is  semisimple
(meaning that for any $h\in H$ the displacement function $x\mapsto
d(x,h(x))$ attains its infimum over $X$). Again the two classes being
compared are not equal, for instance the
Burger-Mozes groups are CAT(0) and even have a geometric action on a
2 dimensional CAT(0) cube complex, but are infinite simple finitely
presented groups, so are as far from being linear as can be imagined.
However this time we will show that
the correspondence between the group theoretic
properties of these two classes is extremely close indeed. 
We now give more details of our results.   

First for CAT(0) groups, we have the following theorem
in \cite{bdhf} Part III Chapter $\Gamma$ Section 1:
\begin{thm} (\cite{bdhf} Theorem 1.1 Part 1) \label{know}
A CAT(0) group $\Gamma$ has the following properties:\\
(1) $\Gamma$ is finitely presented.\\
(2) $\Gamma$ has only finitely many conjugacy classes of finite
subgroups.\\
(3) Every solvable subgroup of $\Gamma$ is virtually abelian.\\
(4) Every abelian subgroup of $\Gamma$ is finitely generated.\\
(5) If $\Gamma$ is torsion-free, then it is the fundamental group of
a compact cell complex whose universal cover is contractible.
\end{thm}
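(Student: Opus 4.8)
The plan is to derive the five properties from the defining data of a CAT(0) group $\Gamma$ --- a proper cocompact action by isometries on a CAT(0) space $X$, which we may take to be complete --- by first recording the ambient geometric facts that do the real work. Since $X$ is complete CAT(0) it is contractible, geodesic contraction to any basepoint supplying the homotopy; a cocompact proper isometric action makes $X$ a proper metric space (closed balls compact); and properness together with cocompactness force every $\gamma\in\Gamma$ to act as a \emph{semisimple} isometry, its displacement function attaining its infimum. Contractibility, properness of $X$, and semisimplicity of the action are the three facts I would invoke repeatedly below.

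For (1) I would apply the \v{S}varc--Milnor lemma to the geodesic space $X$ to obtain that $\Gamma$ is finitely generated and quasi-isometric to $X$; to pass from finite generation to finite presentation I would use that $X$ is simply connected (a consequence of contractibility), since a geometric action on a simply connected geodesic space yields a finite presentation, the relations being carried by the loops of a compact fundamental domain. For (2) I would use that any finite subgroup $F\leq\Gamma$ fixes a point of $X$: a finite, hence bounded, $F$-orbit has a unique circumcenter, which $F$ must fix (the Bruhat--Tits fixed point theorem). After translating by a suitable $\gamma$ these fixed points lie in one compact fundamental domain, and properness then allows only finitely many $\Gamma$-conjugacy classes among the point stabilisers, hence among finite subgroups.

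Properties (4) and (3) are where the geometry of flats enters, through the Flat Torus Theorem: a free abelian $A\cong\Z^n$ acting properly by semisimple isometries stabilises an isometrically embedded flat $\R^n\subseteq X$ on which it acts cocompactly by translations. Cocompactness of the full action bounds the dimension of such a flat, hence bounds the rank of free abelian subgroups, while properness rules out infinitely generated torsion or divisible parts; this yields (4), that every abelian subgroup is finitely generated. For (3) I would then establish the Solvable Subgroup Theorem by induction on derived length, using at each stage that a normal abelian subgroup splits off a flat factor of the relevant Min-set, forcing every solvable subgroup to be finitely generated and virtually abelian. For (5), when $\Gamma$ is torsion-free properness makes the action free --- point stabilisers are finite, hence trivial --- so that $X\to X/\Gamma$ is the universal covering of the compact aspherical quotient $X/\Gamma$ with contractible universal cover $X$; endowing $X/\Gamma$ with (or replacing it by a homotopy-equivalent) finite cell complex exhibits $\Gamma$ as required.

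I expect property (3) to be the main obstacle. The Flat Torus Theorem disposes of the abelian case cleanly, but passing from abelian to solvable requires controlling how the derived series of a solvable subgroup interacts with the flats and Min-sets of $X$, and in particular guaranteeing finite generation at each inductive step; this geometric induction, rather than any of the fixed-point or quasi-isometry arguments, is the technical heart of the theorem.
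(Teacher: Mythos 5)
Your proposal cannot be checked against anything internal to this paper: Theorem \ref{know} is stated here purely as a quotation of \cite{bdhf} (Part III, Chapter $\Gamma$, Theorem 1.1), and no proof of it appears in the paper at all --- it serves as background against which the paper's own results on NIU-linear and VUF-linear groups are measured. The meaningful comparison is therefore with the Bridson--Haefliger arguments themselves, and in approach you match them: \v{S}varc--Milnor plus simple connectedness of $X$ for (1), the Bruhat--Tits circumcentre argument plus a compact fundamental domain for (2), the Flat Torus Theorem feeding an induction on derived length for (3) and (4), and freeness of the action for (5).

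There is, however, one step that fails as you state it. For (4) you assign to \emph{properness} the job of ruling out ``infinitely generated torsion or divisible parts'' of an abelian subgroup. Properness together with semisimplicity does not do this: the group $\bigoplus_{i=1}^{\infty}\Z$ acts metrically properly by hyperbolic (hence semisimple) isometries on an infinite-dimensional Hilbert space --- a complete CAT(0) space --- by letting the $i$th generator translate by $2^i$ along the $i$th coordinate direction, so that only finitely many group elements displace any given point by at most $R$; similarly $\bigoplus_{i=1}^{\infty}C_2$ acts properly by reflections. Both are abelian and not finitely generated. What actually excludes divisibility and infinite torsion in the cocompact setting is different: properness \emph{and} cocompactness together give a uniform positive lower bound on translation lengths of hyperbolic elements (if $a=b^n$ with $b$ hyperbolic then $|a|=n|b|$, so infinite divisibility forces translation lengths to accumulate at $0$) and a uniform bound on the orders of finite subgroups. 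Your induction for (3) inherits the same issue, since it requires finite generation at every stage. This is precisely the dividing line the paper is organised around: its Theorem \ref{know2}, the proper-and-semisimple analogue without cocompactness, deliberately omits properties (3) and (4), which genuinely fail in that generality. Your sketch is repairable, because cocompactness is available to you throughout, but the mechanism must be reattributed to it; as written, the properness-only claim is the gap. (Your parenthetical replacement of $X/\Gamma$ by a homotopy-equivalent finite complex in (5) also glosses a nontrivial point, though a standard one.)
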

Now none of these five points hold in general on replacing CAT(0) group
by finitely generated linear group, regardless of the characteristic.
For instance $F_2\times F_2$ is linear in any characteristic but
it has finitely generated subgroups which are not finitely presented,
thus (1) and (5) fail here on being applied to one of these
subgroups. Next Proposition 4.2 of \cite{grpl} displays
a finitely generated subgroup of $SL(2,\Z)\times SL(2,\Z)$ (also
linear in any characteristic) with infinitely many conjugacy classes of 
order 4
(extended in \cite{brima} to produce finitely presented subgroups 
of $SL(2,\Z)\times SL(2,\Z)\times SL(2,\Z)$ with the same property).
Moreover we have already seen the failure of (3) and (4) for the
wreath products in Examples 1.1 and 1.2. Indeed the counterexamples
being given here for Properties (1), (2) and (5) all belong in our
stricter class of linear groups. Nevertheless a consequence of Theorem
\ref{alt} in our paper is that (3) and (4) do actually hold for 
all finitely generated linear groups in our stricter class, so some of this
correspondence does go through.

However there is a Part 2 of the theorem in \cite{bdhf} as follows:
\begin{thm} (\cite{bdhf} Theorem 1.1 Part 2) \label{know2}
If $H$ is a finitely generated group that acts properly (but not necessarily
cocompactly) by semisimple isometries on the CAT(0) space $X$, then:\\
(i) Every polycyclic subgroup of $H$ is virtually abelian.\\
(ii) All finitely generated abelian subgroups of $H$ are
undistorted in $H$.\\
(iii) $H$ does not contain subgroups of the form
$\langle a,t|t^{-1}a^pt=a^q\rangle$ 
for non zero $p,q$ with $|p|\neq |q|$.\\
(iv) If $A\cong\Z^n$ is central in $H$ then there exists a subgroup
of finite index in $H$ that contains $A$ as a direct factor.\\
\end{thm}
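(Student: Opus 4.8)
The plan is to reduce everything to standard features of semisimple isometries of a CAT(0) space together with the Flat Torus Theorem. First I would record the tools. Since the action is proper the stabiliser of any point is finite, so an element of infinite order cannot be elliptic; being semisimple it is therefore hyperbolic, with a positive translation length $|g|=\inf_x d(x,gx)$ attained on an axis. This translation length is a conjugacy invariant and satisfies $|g^n|=|n|\,|g|$. For a free abelian subgroup $A\cong\Z^n$ the Flat Torus Theorem gives $\mathrm{Min}(A)=\bigcap_{a\in A}\mathrm{Min}(a)\neq\emptyset$, splitting isometrically as $Y\times\E^n$ with $A$ acting trivially on $Y$ and as a cocompact lattice of translations on $\E^n$. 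Moreover any $g$ normalising $A$ preserves this splitting and acts on $\E^n$ as a Euclidean isometry whose linear part realises the conjugation automorphism $a\mapsto gag^{-1}$ of $A$; since linear parts of Euclidean isometries are orthogonal, every such automorphism has all eigenvalues of modulus one.

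Given these, (iii) is immediate: if $\langle a,t\mid t^{-1}a^pt=a^q\rangle\le H$ then $a$ has infinite order, hence is hyperbolic with $|a|>0$, and applying $|\cdot|$ to $t^{-1}a^pt=a^q$ gives $|p|\,|a|=|t^{-1}a^pt|=|a^q|=|q|\,|a|$, forcing $|p|=|q|$. For (ii) let $A$ be a finitely generated abelian subgroup; passing to its finite-index free part $\Z^n$ (undistortion of a finite-index subgroup implies it for the group), choose a basepoint $x_0$ in $\mathrm{Min}(A)=Y\times\E^n$. Then $d(x_0,ax_0)$ equals the Euclidean length of the translation $\tau(a)$, and since $A$ acts as a lattice $\|\tau(a)\|$ is bi-Lipschitz to the word length $|a|_A$. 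On the other hand the orbit map $H\to X$ is Lipschitz for the word metric on $H$, so $|a|_A\lesssim d(x_0,ax_0)\le L\,|a|_H$; as the reverse bound $|a|_H\le|a|_A$ is clear, $A$ is quasi-isometrically embedded, i.e.\ undistorted.

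The main work is (i). I would first treat a finitely generated torsion-free nilpotent subgroup $N$ by induction, splitting off a central $\Z=\langle z\rangle$: as $z$ is central and hyperbolic, all of $N$ preserves $\mathrm{Min}(z)=Y'\times\R$ and its canonical $\R$-factor, so $N$ acts on the lower-complexity space $Y'$, to which the inductive hypothesis applies, yielding that $N$ is virtually abelian and hence (being torsion-free nilpotent) abelian. For a general polycyclic $P$ I would pass to a finite-index torsion-free subgroup and set $A=\mathrm{Fitt}(P)$, which by the nilpotent case is free abelian $\Z^n$. Now $P$ normalises $A$, so by the normaliser fact above the conjugation map $P\to\mathrm{Aut}(A)=GL(n,\Z)$ lands in integer matrices all of whose eigenvalues have modulus one; by Kronecker's theorem these eigenvalues are roots of unity and, the matrices being diagonalisable (orthogonal), the image is a finitely generated torsion subgroup of $GL(n,\Z)$, hence finite. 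Thus $C_P(A)$ has finite index; since the centraliser of the Fitting subgroup of a polycyclic group is contained in it we get $C_P(A)=A$, so $A$ has finite index in $P$ and $P$ is virtually abelian. The delicate points here, which I expect to be the main obstacle, are making the nilpotent induction precise (controlling the induced action on $Y'$ and its semisimplicity) and correctly invoking the Fitting-centraliser fact.

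Finally (iv). Applying the setup to the central $A\cong\Z^n$: all of $H$ preserves $\mathrm{Min}(A)=Y\times\E^n$, and since conjugation on a central subgroup is trivial the linear parts of the induced Euclidean isometries vanish, so $H$ acts on $\E^n$ purely by translations, giving a homomorphism $\phi\colon H\to\R^n$ restricting to an isomorphism of $A$ onto the lattice $\Lambda=\phi(A)$. Choosing a set-section $s\colon H/A\to H$ with cocycle $c$ valued in $A\subset\R^n$ and putting $f(q)=\phi(s(q))$ shows $c=-\delta f$ over $\R$, so the central class is trivial with $\R^n$-coefficients. Reducing $f$ modulo $\Lambda$ yields a homomorphism $\bar f\colon H/A\to\R^n/\Lambda$; passing to a finite-index subgroup $Q_0$ whose abelianisation is free, $\bar f|_{Q_0}$ lifts to a homomorphism $g\colon Q_0\to\R^n$, and then $f-g$ is $A$-valued with $\delta(f-g)=-c$. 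Hence the central extension splits over the finite-index preimage $H_0$ of $Q_0$, and because $A$ is central the splitting makes $A$ a direct factor $H_0=A\times K$, as required.
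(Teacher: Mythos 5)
First, a point of orientation: the paper itself gives no proof of Theorem \ref{know2} --- it is quoted verbatim from \cite{bdhf} (Part III.$\Gamma$, Theorem 1.1(2)), and the paper's own contribution is to prove linear-group analogues of these statements. So your proposal can only be measured against the cited source, and your toolkit is indeed the one used there: properness plus semisimplicity makes infinite-order elements hyperbolic with $|g^n|=|n|\,|g|$, which gives (iii) at once; the Flat Torus Theorem plus the orbit-map comparison gives (ii); and the crucial lemma for (i) and (iv) is exactly the one you isolate, namely that any element normalising $A\cong\Z^n$ acts on the $\E^n$ factor of $\mathrm{Min}(A)$ by a Euclidean isometry whose \emph{orthogonal} linear part realises the conjugation automorphism. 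Your (ii) and (iii) are correct. Your (iv) is correct in substance, but the step ``passing to a finite-index subgroup $Q_0$ whose abelianisation is free'' should be rephrased: take $Q_0$ to be the preimage in $Q=H/A$ of the free part of $Q^{\mathrm{ab}}$, so that $\bar f|_{Q_0}$ factors through a free abelian group and hence lifts to $\R^n$. (It is also more work than needed: once you have the homomorphism $\phi\colon H\to\R^n$ injective on $A$, the elementary pullback argument in the proof of Corollary \ref{coiv} finishes immediately.)

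The genuine gap is the nilpotent induction in (i). The induced action of $N$ on the factor $Y'$ of $\mathrm{Min}(z)=Y'\times\R$ is \emph{not proper}: $z$ acts trivially on $Y'$, the kernel of $N\to\mathrm{Isom}(Y')$ may well be infinite, and nothing makes the induced action of the quotient group proper either. Since properness is indispensable (every group acts by semisimple isometries on a point), ``the inductive hypothesis applies'' is unjustified; nor is ``lower complexity of the space'' a well-founded induction parameter --- any induction must be on the group, and then the loss of properness blocks it. The repair is to never leave the ambient proper action on $X$: the orthogonality fact you already use in the Fitting step kills non-abelian nilpotent groups directly. If $N$ is torsion-free nilpotent of class $c\geq 2$, pick $y\in N$ and $x$ in the $(c-1)$st term of the lower central series with $z=[x,y]\neq 1$; then $z$ is central, $A'=\langle x,z\rangle\cong\Z^2$ (rank one would force $yxy^{-1}=x^{\pm 1}$, contradicting $z\neq 1$ in the $+$ case and nilpotency of $\langle x,y\rangle$ in the $-$ case), and $y$ normalises $A'$ acting by $x\mapsto z^{-1}x$, $z\mapsto z$, i.e.\ by a non-identity unipotent element of $GL(2,\Z)$. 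Such a matrix is not diagonalisable, whereas your normaliser fact says it is conjugate to an orthogonal matrix: contradiction, so $N$ is abelian. With this, your Fitting-subgroup step --- which is correct, including Kronecker's theorem, the finiteness of finitely generated torsion subgroups of $GL(n,\Z)$, and $C_P(\mathrm{Fitt}(P))\leq\mathrm{Fitt}(P)$ for polycyclic $P$ --- completes (i). This is in substance how \cite{bdhf} argues: the geometry is only ever applied to subgroups $\Z^n\rtimes_\phi\Z$ acting on $X$ itself, where orthogonality of the linear part forces $\phi$ to be diagonalisable with root-of-unity eigenvalues, hence of finite order, covering the unipotent (Heisenberg) and exponential cases in one stroke; all the rest is polycyclic group theory.
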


Again all four of these points fail for finitely generated linear
groups overall, as already seen in the examples above.
But in this paper we will show that these four results hold verbatim
for any finitely generated group in our more general class of linear groups.
Consequently they all hold for any finitely
generated group which is linear in positive characteristic.
In fact we have already observed that Property (iii) is a consequence of 
Property (ii).
The rest of the paper is arranged as follows: in the next section
we introduce our two classes, where the more general class
will be known as NIU-linear groups (for no infinite order unipotents)
and the stricter class as VUF-linear groups (virtually unipotent
free). 
We show in this section that both classes of groups have good closure
properties, indeed the same as for
linear groups in general. Section 3 is about
centralisers where we establish Property (iv) for 
finitely generated NIU-linear groups
in Corollary \ref{coiv}.
  
Section 4 is about small subgroups. 
Property (i) is readily established for NIU-linear groups 
in Proposition \ref{coi} but we get much stronger behaviour
for VUF-linear groups, as Corollary \ref{sbgp} states that
a finitely generated VUF-linear group $G$ (in fact here finite
generation is required) has the property that
every subgroup $S$ of $G$ either
contains $F_2$ or is virtually abelian and finitely generated.
Then in Section 5 we show that for $G$ any finitely generated
NIU-linear group, all finitely generated abelian subgroups
of $G$ are undistorted in $G$. This extends a result in \cite{lmr}
which established this for cyclic subgroups. 

Of course both $G$ and the subgroup
must be finitely generated for this concept to be defined, but combining
this with Section 4 tells us that if $G$ is any finitely generated
VUF-linear group then any subgroup $S$ of $G$ is virtually abelian,
finitely generated and undistorted in $G$. Consequently we end up
showing that finitely generated VUF-linear groups are
extremely well behaved: not only do they have the
very strong properties on small subgroups enjoyed by word
hyperbolic groups, but they also have much better closure properties
than word hyperbolic groups.

We provide applications of each of these results in Section 6.
In particular, by using the ideas in \cite{bri} on centralisers
of Dehn twists in mapping class groups, we show in Corollary \ref{yoh}
that for genus at least 3 the mapping class group cannot be linear
over any field of positive characteristic, nor embed in the
complex unitary group of any finite dimension
(indeed any such representation in either case
sends all Dehn twists to elements
of finite order). We then consider how our classes of linear groups
correspond to the fundamental groups of 3-manifolds of non positive
curvature. Though we only consider closed 3-manifolds with a geometric
structure, we can show quickly in Theorem \ref{3m} that for such an
$M^3$ with $\pi_1(M^3)$ not virtually cyclic, the fundamental group
belongs in either of our two classes of linear groups 
if and only
if $M^3$ admits a Riemannian metric of non positive curvature.  

We then consider Euclidean groups, meaning arbitrary abstract subgroups
of the Euclidean isometry group in a particular dimension, without any
condition on their geometric properties. These need not be in either of
our two classes of linear groups, although by using our earlier results
we can reduce this to dealing with their translation subgroup. We establish
Property (iv) for any finitely generated Euclidean group in Corollary 
\ref{cene}  and a similar result on small subgroups in Corollary \ref{smae}
(though even in dimension two we can have finitely generated Euclidean
groups which are solvable but not virtually abelian). We also prove in
Corollary \ref{undse} that all infinite cyclic subgroups of finitely
generated Euclidean groups are undistorted, in contrast to \cite{conir}
which shows that finitely generated abelian subgroups are often distorted.

We finish by looking at the group $Out(F_n)$ which is not linear
(at least for $n\geq 4$). The paper \cite{alib} shows that all
infinite cyclic subgroups of $Out(F_n)$ are undistorted and this
was recently extended to abelian groups in \cite{wrig}, though in
Theorem \ref{trids} we show that this also follows by combining the original 
result in \cite{alib} and a quick argument using translation lengths.
We also note that the property of undistorted abelian subgroups then
extends immediately to $Aut(F_n)$ and free by cyclic groups $F_n\rtimes\Z$.

\section[Linear groups without infinite order unipotents]{Properties 
of linear groups without infinite order unipotents}

If $\F$ is any field and $d\in\N$ any positive integer
then we say an element $g$ of the general linear group $GL(d,\F)$ is 
{\bf unipotent}
if all its eigenvalues (considered over the algebraic closure
$\overline{\F}$ of $\F$) are equal to 1, or equivalently 
some positive power of $g-I$ is the zero matrix.

\begin{prop}  \label{diag} \qquad\\
(i) If $\F$ is a field of characteristic $p>0$  
and $M\in GL(d,\F)$ is unipotent then $M$ has finite order
equal to some power of $p$.

Conversely if $M$ is any element of $GL(d,\F)$ with order
$n$ which is a multiple of $p$ then $M^{n/p}$ is a 
non identity unipotent element. \\ 
\hfill\\
(ii) If $\F$ has zero characteristic then the only unipotent element
$M$ having finite order in $GL(d,\F)$ is $I$.
\end{prop}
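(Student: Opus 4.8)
The plan is to work throughout over the algebraic closure $\overline{\F}$ and to exploit the behaviour of $p$-th powers in characteristic $p$, where the binomial coefficients $\binom{p}{j}$ for $0<j<p$ all vanish. For the first assertion of (i), I would write $M=I+N$ with $N=M-I$. The unipotence hypothesis says every eigenvalue of $M$ equals $1$, so $N$ has all eigenvalues $0$ and is nilpotent; being a $d\times d$ matrix it satisfies $N^d=0$. Since $I$ and $N$ commute, the binomial theorem gives $M^{p^s}=(I+N)^{p^s}=\sum_j\binom{p^s}{j}N^j$, and in characteristic $p$ every coefficient $\binom{p^s}{j}$ with $0<j<p^s$ is divisible by $p$ and hence vanishes, leaving $M^{p^s}=I+N^{p^s}$. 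Choosing $s$ with $p^s\geq d$ forces $N^{p^s}=0$, so $M^{p^s}=I$; thus the order of $M$ divides $p^s$ and is a power of $p$. The one genuine point here is the collapse of the binomial expansion modulo $p$, which is routine once one records that $I$ and $N$ commute.

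For the converse in (i), I would examine the eigenvalues of $M^{n/p}$ directly over $\overline{\F}$. As $M^n=I$ we have $(M^{n/p})^p=I$, so each eigenvalue $\mu$ of $M^{n/p}$ satisfies $\mu^p=1$. In characteristic $p$ the identity $x^p-1=(x-1)^p$ shows that $1$ is the only $p$-th root of unity, whence $\mu=1$ and $M^{n/p}$ is unipotent. It is not the identity, because $n/p$ is strictly smaller than the order $n$ of $M$.

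For (ii), suppose $M\in GL(d,\F)$ is unipotent of finite order $n$ in characteristic zero. Then the minimal polynomial of $M$ divides $x^n-1$, which is separable since $n\neq 0$ in $\F$ (its derivative $nx^{n-1}$ shares no root with it), so $M$ is diagonalisable over $\overline{\F}$; being unipotent all its eigenvalues are $1$, and a diagonalisable matrix with every eigenvalue equal to $1$ is $I$. Alternatively, avoiding diagonalisation, one writes $M=I+N$ with $N^k=0$ and $N^{k-1}\neq 0$, and if $N\neq 0$ then multiplying $(I+N)^n-I=nN+\binom{n}{2}N^2+\cdots$ by $N^{k-2}$ leaves $nN^{k-1}\neq 0$ (using $n\neq 0$ in characteristic zero), contradicting $M^n=I$.

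I expect the main difficulty to be purely organisational rather than mathematical: each piece reduces to an elementary fact about roots of unity or about nilpotent matrices, the only substantive input being the characteristic $p$ Frobenius identity, so the care needed is mainly in keeping the two directions of (i) and the two characteristics cleanly separated.
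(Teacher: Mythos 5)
Your proof is correct and takes essentially the same route as the paper's: the collapse of the binomial expansion of $(I+N)^{p^s}$ in characteristic $p$ for the first part of (i), the factorisation $x^p-1=(x-1)^p$ for the converse, and separability of $x^n-1$ in characteristic zero giving diagonalisability for (ii). The only cosmetic differences are that you derive $N^d=0$ from the eigenvalue condition via Cayley--Hamilton where the paper uses the nilpotency formulation of unipotence directly, and you argue the converse through the eigenvalues satisfying $\mu^p=1$ rather than through the minimum polynomial dividing $(x-1)^p$, which is the same fact in different clothing.
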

\begin{proof}
For (i) there is $r>0$
with $N^l=0$ for $l\geq r$, where $N=M-I$. If
we take $k$ to be any power of $p$
which is at least $r$ then
\[M^k=(I+N)^k=I^k+\binom{k}{1}I^{k-1}N+\ldots
+\binom{k}{k-1}I N^{k-1}+N^k.\]
But $N^k=0$ because $k\geq r$ and $\binom{k}{i}\equiv 0$ modulo $p$
for $0<i<k$ as $k$ is a power of $p$, thus $M$ has order dividing $k$.

We now assume for the rest of the proof that $\F$ is algebraically
closed. As $1 \leq n/p<n$ we know that $M^{n/p}$ has order exactly $p$ and
hence has minimum polynomial $p(x)$ dividing $x^p-1=(x-1)^p$, but
any eigenvalue of a matrix must be a root of its minimum polynomial
so $M^{n/p}$ is unipotent.

For (ii), if $M$ has finite order then the minimum polynomial
of $M$ is $x^n-1$ for some $n\in\N$ and in characteristic zero this has 
no repeated roots, so $M$ is diagonalisable over $\F$ but has all eigenvalues
equal to 1, so is the identity. 
\end{proof}

We now come to the two key definitions of the paper.
\begin{defn} If $\F$ is any field and $d\in\N$ any dimension then we say
that a subgroup $G$ of $GL(d,\F)$ is {\bf NIU-linear} (standing for
linear with No Infinite order Unipotents) if every unipotent element of
$G$ has finite order.
\end{defn}
Note: by Proposition \ref{diag}, if $\F$ has positive characteristic then $G$ 
is automatically NIU-linear. If $\F$ has characteristic zero then the
definition says that the only unipotent element of $G$ is the identity.\\
\hfill\\
{\bf Example}: If $G$ is any subgroup of the real orthogonal group $O(d)$ or
of the complex unitary group $U(d)$ in any dimension $d$ then $G$ is
NIU-linear because all orthogonal or unitary matrices are diagonalisable
over $\C$.\\
\begin{defn} 
If $\F$ is any field and $d\in\N$ any dimension then we say
that a subgroup $G$ of $GL(d,\F)$ is {\bf VUF-linear} (standing for
linear and Virtually Unipotent Free) if $G$ has a finite index
subgroup $H$ where the only unipotent element of $H$ is the identity.
\end{defn}
Note: clearly VUF-linear implies NIU-linear and they are the same in
characteristic zero. As for the case when $G$ is linear in positive
characteristic, clearly if $G$ is also virtually torsion free then
it is VUF-linear. Although this need not true the other
way round, it does hold if $G$ is finitely generated, say by \cite{wehbk}
Corollary 4.8. This states that any
finitely generated linear group
has a finite index subgroup whose elements of finite order are
all unipotent (which might be thought of as ``Selberg's theorem in
arbitrary characteristic'').\\
\hfill\\
When we have a group $G$ which is only given in abstract form then to
say $G$ is NIU-linear or VUF-linear will mean that there exists 
some field $\F$ and dimension
$d$ such that $G$ has a faithful representation in $GL(d,\F)$ and the
image of this representation has the respective property.
We now begin by examining the closure properties of these two classes of
linear groups, which turn out to be the same as for
arbitrary linear groups. More precisely the property 
of being linear in a given
characteristic is known to be preserved under subgroups, commensurability
classes, direct and free products, so we now show that the same is true
for either of these two classes when restricted to a particular characteristic.

\begin{prop} \label{clos}
If $G_1$ and $G_2$ are groups which are both NIU-linear over fields
$\F_1,\F_2$ having the same characteristic then:\\
(i) Any subgroup $S$ of $G_1$ (or $G_2$) is NIU-linear.\\
(ii) Any group $G$ commensurable with $G_1$ (or $G_2$) is NIU-linear.\\
(iii) The direct product $G_1\times G_2$ is NIU-linear.\\
(iv) The free product $G_1*G_2$ is NIU-linear.

The same holds with NIU-linear replaced throughout by VUF-linear.
\end{prop}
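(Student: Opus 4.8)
The plan is to prove the four closure properties for NIU-linear groups and then argue that each argument transfers to the VUF-linear case. The key unifying observation is that all four constructions --- subgroups, commensurable groups, direct products, free products --- are already known to preserve ordinary linearity in a fixed characteristic, and each comes with an explicit faithful representation. So in each case I would take the \emph{standard} linear representation produced by the classical construction and merely verify that it introduces no infinite-order unipotent elements. The heart of the matter is therefore to understand, for each construction, where unipotent elements in the new representation come from and to bound their orders using the NIU hypothesis on $G_1,G_2$.

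First I would dispose of (i) and (ii), which are essentially immediate. For (i), if $S\leq G_1$ then the restriction to $S$ of the given faithful NIU representation of $G_1$ is faithful, and any unipotent element of $S$ is a unipotent element of $G_1$, hence of finite order; so $S$ is NIU-linear. For (ii) I would first note that commensurability allows passage up and down along finite-index subgroups, and that having finite-index subgroups on both sides reduces to showing NIU-linearity is preserved under finite extensions and finite-index subgroups. The finite-index-subgroup direction follows from (i). For a finite overgroup one can use the standard induced-representation (or regular-representation) construction to realise the overgroup linearly in the same characteristic; here the main point to check is that an infinite-order unipotent in the overgroup would force, via some power lying in the finite-index NIU subgroup, an infinite-order unipotent there, a contradiction. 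I expect a clean way to phrase this is: if $g$ in the overgroup is unipotent of infinite order then a suitable power $g^k$ lies in the index-$n$ subgroup and is still unipotent (powers of unipotents are unipotent) and still of infinite order, contradicting the hypothesis.

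For (iii), the direct product $G_1\times G_2$ embeds via the block-diagonal representation $\rho_1\oplus\rho_2$ into $GL(d_1+d_2,\F)$ after first arranging $\F_1,\F_2$ to sit inside a common field $\F$ of the shared characteristic (for instance a field containing copies of both, obtainable since they have the same characteristic). An element $(g_1,g_2)$ is unipotent in the block-diagonal image precisely when each block $\rho_i(g_i)$ is unipotent, because the eigenvalues of a block-diagonal matrix are the union of the eigenvalues of the blocks. Hence a unipotent element of $G_1\times G_2$ has both coordinates unipotent, so each $g_i$ has finite order by the NIU hypothesis, and therefore $(g_1,g_2)$ has finite order (the lcm of the two). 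For (iv), the free product $G_1*G_2$ is linear in the same characteristic by the classical result (e.g.\ via a ping-pong or amalgamation construction over a common field), and I would again check that unipotent elements have finite order. The main obstacle is genuinely here: in a free product a unipotent element need not be conjugate into a factor, so I cannot simply reduce to $G_1$ or $G_2$ directly. The natural route is to invoke Proposition \ref{diag}: in positive characteristic \emph{every} linear group is automatically NIU-linear, so (iv) is free in that case. In characteristic zero the NIU condition says the only unipotent is the identity, and I would use the fact that a nontrivial element of a free product acts with positive translation length / has an infinite-order image under the standard faithful representation, so that any genuinely unipotent (hence, in characteristic zero, infinite-order) element would have to be trivial; equivalently, one checks that the classical faithful representation of $G_1*G_2$ sends every nontrivial element to a non-unipotent matrix unless it lies in a factor, where the factor hypothesis applies.

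Finally, for the VUF statement I would observe that each of the four constructions respects the passage to a finite-index ``unipotent-free'' subgroup: given finite-index subgroups $H_i\leq G_i$ whose only unipotent element is the identity, the corresponding subgroup ($H_1$ inside a subgroup or commensurable group, $H_1\times H_2$ inside the direct product, and the finite-index subgroup of the free product generated appropriately) again has finite index, and the NIU-style arguments above show its only unipotent element is the identity. The one point requiring care is the free product, where a finite-index subgroup of $G_1*G_2$ need not itself be a free product of the $H_i$; here I would use the Kurosh subgroup theorem to describe it as a free product of conjugates of subgroups of the factors together with a free group, and check the unipotent-free property factor by factor. I expect this free-product VUF case to be the genuine technical obstacle, and the rest to be routine given the classical linearity constructions.
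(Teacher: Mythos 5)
Your parts (i)--(iii), and the positive-characteristic half of (iv), essentially match the paper: restriction of the representation for (i), induced representations plus "some power of an infinite-order unipotent lands in the finite-index subgroup" for (ii), a common field and block-diagonal eigenvalue bookkeeping for (iii), and the fact that every linear group in positive characteristic is automatically NIU-linear (granting the classical theorem that free products of linear groups in a fixed characteristic are linear) for the easy case of (iv). Your Kurosh-based treatment of the VUF free-product case in positive characteristic is also sound and is a legitimate variant of the paper's argument: the paper instead intersects the kernels of the two retractions $G_1*G_2\rightarrow G_i/H_i$ and uses that an order-$p$ unipotent element of a free product is conjugate into a free factor, which is the same underlying fact your Kurosh decomposition relies on.

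The genuine gap is the characteristic-zero case of (iv), which is where the real content of the proposition lies, and which also covers the VUF statement there since VUF and NIU coincide in characteristic zero. You correctly isolate the claim that is needed --- that a suitable faithful representation of $G_1*G_2$ sends every element not conjugate into a factor to a non-unipotent matrix --- but your proposed justification does not prove it. Observing that such an element has infinite order (or positive translation length on the Bass--Serre tree) cannot help: in characteristic zero a nontrivial unipotent automatically has infinite order, so "infinite order" is consistent with, not contrary to, being unipotent, and the inference "genuinely unipotent, hence of infinite order, hence trivial" is circular. Nor is the claim a formal consequence of faithfulness of "the classical representation"; it is a property of a carefully chosen representation that must be verified by computation. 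The paper does this via Shalen's construction: after padding so that neither $G_i$ contains nontrivial scalar matrices, conjugating $G_1$ so that no nontrivial element has a zero top-right entry and $G_2$ so that no nontrivial element has a zero bottom-left entry, and then conjugating $G_1$ by a diagonal matrix in powers of a transcendental $t$, one shows that any element of $G_1*G_2$ not conjugate into a factor has trace equal to a Laurent polynomial in $t^{\pm 1}$ with at least two nontrivial terms. Such a trace is transcendental over the base field, hence cannot equal $d$, so the element is not unipotent; elements that are conjugate into a factor are then handled by conjugacy-invariance of unipotency together with the NIU hypothesis on $G_1$ and $G_2$. Without this trace computation (or some comparable verification attached to a specific representation), your outline for (iv) in characteristic zero remains an assertion of the statement rather than a proof of it.
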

\begin{proof} We will proceed in the following order. First, 
NIU-linear groups in a given positive characteristic
just mean arbitrary linear groups in this characteristic,
in which case the closure properties are already known.
We then argue for NIU-linear groups in characteristic
zero, which here are the same as VUF-linear groups. We then make the
necessary adjustments in our proof for VUF-linear groups in positive
characteristic.

Part (i) is immediate for NIU-linear groups and follows straight away
for VUF-linear groups because if $S$ is a subgroup of $G$ and $H$ is the
given finite index subgroup which is unipotent free then $S\cap H$
has finite index in $S$. This now reduces (ii) to saying that
if $G_1$ is NIU-linear and is also a 
finite index subgroup of the group $G$ then 
$G$ is NIU-linear too (and the same for VUF-linearity but this is immediate).
We certainly know that $G$ is linear over the same field as
$G_1$ by induced representations (though probably of bigger dimension).
But if $g\in G$ is a unipotent element of infinite order then so are all
its positive powers $g^n$ and some of these will lie in $G_1$.

For (iii) and (iv), we first observe that there is a field $\F$
containing both $\F_1$ and $\F_2$. (As they have the same prime subfield
$\PP$, we can adjoin enough transcendental elements to $\PP$ which are all
algebraically independent, resulting in a field $\F'$ where all elements of
$\F_1$ and $\F_2$ are algebraic over $\F'$, so that $\F_1$ and $\F_2$ both
embed in the algebraic closure of $\F'$.) We then see that the direct product
of $G_1\times G_2$ is linear over $\F$ in the usual way by combining
the two blocks representing $G_1$ and $G_2$. 
Then we
note that the eigenvalues of an element $g\in G_1\times G_2$ are just the
union of the eigenvalues in each of the two blocks. Thus a unipotent
element of $G_1\times G_2$ is unipotent in both the
$G_1$ and the $G_2$ blocks, hence in characteristic zero it
is the identity in $G_1\times G_2$ if
$G_1$ and $G_2$ are both NIU-linear. If instead they are VUF-linear with
finite index subgroups $H_1,H_2$ respectively that are unipotent
free then so is $H_1\times H_2$, which has finite index in $G_1\times G_2$.

Free products of linear groups over the same characteristic were shown to
be linear in \cite{nis}, which of course implies NIU-linearity
in positive characteristic. For the remaining cases
we can assume that we are in an algebraically closed field
$\F$ which has infinite transcendence degree
over its prime subfield. This is the setting for the proofs
in \cite{sha} which rediscovered the result on free products. (Actually
that paper works in $SL(d,\F)$ but the proofs go through in $GL(d,\F)$ as
well.)

We first assume, by increasing the size of the matrices and adding 1s
on the diagonal if needed, that the NIU-linear groups
$G_1$ and $G_2$ both embed in
$GL(d,\F)$ with neither subgroup containing any scalar matrices apart
from the identity. Moreover these embeddings will still be NIU-linear. 
Then Lemma 2.2 of the above paper says there is a
conjugate of $G_1$ (which will henceforth be called $G_1$) in $GL(d,\F)$
such that no non identity element of this conjugate has zero in its top
right hand entry. Similarly by taking a conjugate of $G_2$ we can assume
that no non identity element of $G_2$ has zero in its bottom left hand
entry. Next \cite{sha} Proposition 1.3 shows that $G_1*G_2$ embeds in
$GL(d,\F(t))$ for $t$ any element which is transcendental over $\F$. This
is achieved by conjugating $G_1$ by a diagonal matrix made up of powers of 
$t$ and then showing that in the resulting linear representation of 
$G_1*G_2$, any element $g\in G_1*G_2$ which is not conjugate into $G_1$
or $G_2$ will have a trace which is a Laurent polynomial in $t^{\pm 1}$
with at least two non trivial terms and so $g$ is not the identity. But
this also means that the trace of $g$ is transcendental over $\F$, thus
cannot equal $d$ and so $g$ is not unipotent. Now
being unipotent is a conjugacy invariant and
as there are no unipotents
in $G_1$ or $G_2$ either,
the resulting faithful linear representation of $G_1*G_2$ is
NIU-linear.

Thus we are done in characteristic zero for both our classes of linear
groups. As for preservation of VUF-linearity under free products in
characteristic $p>0$, we can use a trick: by dropping down further if
necessary we can assume that both the unipotent free finite index subgroups
$H_1$ of $G_1$ and $H_2$ of $G_2$ are normal. This then gives us two
homomorphisms (for $i=1,2$) $q_i:G_i\rightarrow G_i/H_i$ onto finite groups
and these can be both be extended from $G_1*G_2$ to $G_i/H_i$ with kernels
which we will call $K_1$ and $K_2$. Now note that $K_1\cap K_2$ is also
normal and has
finite index in $G_1*G_2$, and that both maps from $G_1*G_2$
to $G_i$ are retractions so we have $G_i\cap K_i=H_i$.

So suppose that there is a non identity unipotent element $k\in K_1\cap K_2$.
By Proposition \ref{diag} (i) we can assume that $k$ has order $p$. Thus
in the free product $G_1*G_2$ we must have that $k$ is conjugate into 
$G_1$ or $G_2$. This conjugate also lies in $K_1\cap K_2$ and is
unipotent, so if it is
in $G_1$ then it is also in $H_1$ and the same for $G_2$ and $H_2$. But
both $H_1$ and $H_2$ are unipotent free, so either way we are done.  
\end{proof}
\hfill\\
Note: To see that we cannot mix and match different characteristics
in (iii) and (iv), even for finitely generated groups, the lamplighter
group $C_2\,\wr\,\Z$ is linear in characteristic 2,
but only in this characteristic, whereas the ``trilamplighter'' group
$C_3\,\wr\,\Z$ is linear only in characteristic 3. Thus any group
containing them both (such as their direct or free product) is not
NIU-linear, or even linear over any field. 

We finish this section with a few words on how these closure properties
work for the three geometric classes of groups that were mentioned in the 
introduction. First, being word hyperbolic is well known to be preserved
under commensurability classes and free products, but hardly ever under
direct products. Moreover it is certainly not preserved under passage to
subgroups in general, but even if we only pass to finitely presented
subgroups then there are examples of Noel Brady which are not word
hyperbolic.

Next CAT(0) groups are preserved under both free and direct products, but 
again we can have finitely presented subgroups of CAT(0) groups which
are not themselves CAT(0), for instance by using finiteness properties
of direct products of free groups and their subgroups (see \cite{bdhf}
Chapter III.$\Gamma$ Section 5). Also commensurability is unclear
because of the problem of lifting the action of a finite index subgroup
to the whole group in a way that preserves cocompactness.

However we do obtain all four closure properties for the class of groups
acting properly and semisimply on a CAT(0) space, provided we impose the
mild extra condition that the CAT(0) space is complete
(indeed some authors refer to this as a Hadamard space). This geometric
property is preserved by free and direct products, and clearly also by
passage to arbitrary subgroups. But less obviously,
if a group $H$ has finite index $i$
in $G$ and $H$ has a proper action on the CAT(0) space $X$ then
we can use this to induce an action of $G$ on the direct product of $i$ copies
of $X$ (also a CAT(0) space) and this action will be proper. Moreover if
$X$ is complete then this induced action of $G$ will also be semisimple,
by \cite{bdhf} Chapter II Proposition 6.7 and Part (2) of Theorem 6.8
(we thank Martin Bridson for helpful correspondence on this point).

Consequently, as for the analogies we have mentioned
between the various classes of linear and
geometric groups, the strongest one by far is between our NIU-linear
groups and groups acting properly and semisimply on a complete CAT(0)
space. Having seen that the closure properties above all hold in both
of these cases, the next sections are about other group theoretic properties
of NIU-linear and VUF-linear groups. 

We finish this section by remarking
on two papers containing related results: first \cite{kaplb} considers
basically the same class of groups (which they call Hadamard groups)
as those acting properly and semisimply on complete CAT(0) spaces
(their definition of a proper action, which they call discrete, is that of 
acting
metrically properly, though if the space is itself a proper metric
space then the two definitions are equivalent). Also \cite{alsh} shows
that in characteristic zero a finitely generated linear group
has finite virtual cohomological dimension if and only if there is a
finite upper bound on the Hirsch ranks of its finitely generated
unipotent
subgroups. Thus our finitely generated NIU-linear groups in characteristic 
zero always have finite virtual cohomological dimension, though as this also
holds for the last three examples in Section 1, we see that this property on 
its own is not enough to rule out bad behaviour.

\section[Centralisers in linear groups]{Abelianisation of centralisers in 
linear groups with restricted unipotent elements}
We first show the following result. 
\begin{thm} \label{det}
Suppose that $G$ is a linear group over some field $\F$
of arbitrary
characteristic and $A$ is a abelian subgroup which is
central in $G$. Let $\pi$ be the homomorphism from $G$ to
its abelianisation $G/G'$. We have:\\
(i) If $G$ is NIU-linear then $\mbox{ker}(\pi)$ is a torsion group.\\
(ii) If $G$ is VUF-linear then $\mbox{ker}(\pi)$ is a finite group.
\end{thm}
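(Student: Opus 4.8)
The plan is to reduce everything to an eigenvalue computation for a single central element of the derived subgroup, so fix $a\in A$ with $a\in\ker\pi$ (that is, a central element lying in $G'$) and work inside $GL(d,\overline{\F})$ after extending scalars, which affects neither centrality nor membership in $G'$. Since $a$ commutes with every $g\in G$, each generalised eigenspace $V_\lambda=\ker(a-\lambda I)^d$ of $a$ is $G$-invariant, giving subrepresentations $\rho_\lambda:G\to GL(V_\lambda)$ with $\overline{\F}^d=\bigoplus_\lambda V_\lambda$. I would then observe that $g\mapsto\det\rho_\lambda(g)$ is a homomorphism into the abelian group $\overline{\F}^\times$, so it factors through $G/G'$ and kills $\ker\pi$; as $a\in\ker\pi$ this gives $\det\rho_\lambda(a)=1$. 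But $a$ acts on $V_\lambda$ with the single eigenvalue $\lambda$, so $\det\rho_\lambda(a)=\lambda^{\dim V_\lambda}$, and hence $\lambda^{\dim V_\lambda}=1$ for every eigenvalue $\lambda$ of $a$.

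Thus every eigenvalue of $a$ is a root of unity of order dividing $L:=\mathrm{lcm}(1,2,\ldots,d)$, so $a^L$ has all eigenvalues equal to $1$, i.e. $a^L$ is unipotent. In the NIU case Proposition \ref{diag} (and the definition itself in characteristic zero) then forces $a^L$, and so $a$, to have finite order. As $a$ was an arbitrary element of $A\cap\ker\pi$ this proves part (i).

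For part (ii) I would upgrade ``torsion'' to ``finite''. Let $H\le G$ be the finite-index subgroup in which the identity is the only unipotent, and set $B=A\cap\ker\pi\cap H$; since $B$ has finite index in $A\cap\ker\pi$ it is enough to show $B$ is finite. Each $b\in B$ is torsion with eigenvalues of order dividing $L$ by part (i). In characteristic zero such a $b$ is automatically semisimple, while in characteristic $p$ I would note that $p$ cannot divide $n=\mathrm{ord}(b)$, since otherwise $b^{n/p}$ would be a non-identity unipotent lying in the unipotent-free group $H$ by the converse part of Proposition \ref{diag}(i); in either case $b$ is semisimple. Hence the commuting family $B$ is simultaneously diagonalisable over $\overline{\F}$ with all diagonal entries roots of unity of order dividing $L$. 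Therefore $B$ embeds in the finite group of diagonal $d\times d$ matrices whose entries are $L$-th roots of unity, so $B$, and with it $A\cap\ker\pi$, is finite.

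I expect the last paragraph to be the main obstacle. Over a field of positive characteristic an abelian torsion linear group of bounded exponent can easily be infinite (for example an infinite group of unipotent upper-triangular matrices), so ``torsion'' does not give ``finite'' for free; it is precisely the VUF hypothesis that rules this out. The delicate point is to combine the unipotent-free finite-index subgroup with the converse in Proposition \ref{diag}(i) to force semisimplicity of every element of $B$, after which commutativity and the uniform bound $L$ on eigenvalue orders turn simultaneous diagonalisation into an embedding into a fixed finite group.
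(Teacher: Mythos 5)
Your proof is correct, and it rests on the same key mechanism as the paper's own argument: determinants of the sub-representations of $G$ on $G$-invariant generalised eigenspaces are homomorphisms to an abelian group, hence factor through $G/G'$, forcing every eigenvalue of a central element of $\ker\pi$ to be a root of unity of order dividing the corresponding block dimension. (Like the paper's proof, what you actually establish concerns $A\cap\ker\pi$ rather than all of $\ker\pi=G'$; that is the intended reading and the one used in the applications, since the literal statement would fail for, say, a nonabelian free group embedded without unipotents.) The execution differs in two genuine ways. For (i), you decompose with respect to a single element $a$ at a time, which suffices and is leaner than the paper's construction: the paper builds one common $G$-invariant refinement $V_1\oplus\cdots\oplus V_k$ by iterating over all of $A$, so that every element of $A$ has a single eigenvalue on each block, and then conjugates $A$ to upper triangular form within each block. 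The paper needs that global picture precisely for (ii), where it argues that two elements of $H\cap A$ with the same diagonal entries differ by a unipotent and hence coincide, while the diagonal entries of elements of $\ker\theta$ are roots of unity of bounded exponent, so only finitely many elements can occur. You replace that step by showing each element of $B=A\cap\ker\pi\cap H$ is semisimple — using the converse half of Proposition \ref{diag}(i) to rule out order divisible by the characteristic — and then simultaneously diagonalising the commuting family $B$ and embedding it in the finite group of diagonal matrices whose entries are $L$-th roots of unity. Both routes invoke unipotent-freeness of $H$ at exactly the point where torsion of bounded exponent must be upgraded to finiteness (as you correctly flag, this is indispensable in positive characteristic); the paper's version avoids the semisimplicity and simultaneous-diagonalisation machinery, while yours avoids the simultaneous refinement and triangularisation of $A$ and makes part (i) self-contained for each individual element.
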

\begin{proof}
We first replace our field by its algebraic closure, which we will also call
$\F$. Then it is true that any abelian subgroup of $GL(d,\F)$ is conjugate
in $GL(d,\F)$ to an upper triangular subgroup of $GL(d,\F)$, for instance
by induction on the dimension and Schur's Lemma.

For any $g\in G$ and $a\in A$ we have $ga=ag$. This means that
$g$ must map not just each eigenspace of $a$ to itself, but each
generalised eigenspace
\[E_{\lambda}(a)=\{v\in\F^d:(a-\lambda I)^nv=0\mbox{ for some } n\in\N\}
\mbox{ where }\lambda\in\F\]
and together these span, so that if $a$ has distinct eigenvalues
$\lambda^{(a)}_1,\ldots ,\lambda^{(a)}_{d_a}$ then 
$\bigoplus_{i=1}^{d_a} E_{\lambda^{(a)}_i}(a)$ is a
$G$-invariant direct sum of $\F^d$.

We now take a particular (but arbitrary) non identity element $a$ of $A$
and restrict $G$ to the first of these
generalised eigenspaces $E_{\lambda^{(a)}_1}(a)$, so that 
here $a$ only has the one eigenvalue $\lambda^{(a)}_1$.  
If this property also holds on $E_{\lambda^{(a)}_1}(a)$ for every
other $a'\in A$ then we proceed to $E_{\lambda^{(a)}_2}(a)$, 
$E_{\lambda^{(a)}_3}(a)$ and so on. Otherwise there is another $a'\in A$
such that we can split $E_{\lambda^{(a)}_1}(a)$ further into
pieces where $a'$ has only one eigenvalue on each piece.
Moreover this decomposition is also $G$-invariant because it can
be thought of as the direct sum of the generalised eigenspaces of $a'$
when $G$ is restricted to $E_{\lambda^{(a)}_1}(a)$. 

We then continue this process on all of the pieces and over all elements
of $A$ until it terminates (essentially we can view it as building a
rooted tree where every vertex has valency at most $d$ and of finite
diameter). We will now find that we have split $\F^d$ into a $G$-invariant
sum $V_1\oplus\ldots \oplus V_k$ of $k$ blocks, where any element
of $A$ has a single eigenvalue when restricted to any one of these 
blocks. 

Now we conjugate within each of these blocks so that the restriction
of $A$ to this block is upper triangular, using the comment at the
start of this proof. Under this basis so obtained
for $\F^d$, we have that any $a\in A$ will now be of the form
\[a=\sma{ccc}\boxed{T_1}& &0\\
&\ddots&\\
0& &\boxed{T_k}\fma\]
where each block $T_i$ is an upper triangular matrix with all diagonal entries
equal (as these are the eigenvalues of $a$ within this block).
More generally any $g\in G$ will be of the form
\[\sma{ccc}\boxed{M_1}& &0\\
&\ddots&\\
0& &\boxed{M_k}\fma
\]
for various matrices $M_1,\ldots ,M_k$ which are the same size as the
respective matrices $T_1,\ldots ,T_k$ because we know $g$ 
preserves this decomposition.

Consequently we have available as homomorphisms from $G$ to the multiplicative
abelian group $(\F^*,\times)$ not just the determinant itself but also
the ``subdeterminant'' functions
$\mbox{det}_1,\ldots ,\mbox{det}_k$, where for $g\in G$ the
function $\mbox{det}_j(g)$ is defined as
the determinant of the $j$th block of $g$ when
expressed with respect to our basis above, and
these are indeed homomorphisms as is
\[\theta:G\rightarrow (\F^*)^k\mbox{ given by }
\theta(g)=(\mbox{det}_1(g),\ldots ,\mbox{det}_k(g)).\]
As $\theta$ is a homomorphism from $G$ to an abelian
group, it factors through the homomorphism $\pi$ 
from $G$ to its abelianisation because this is the universal abelian
quotient of $G$. This means that $\mbox{ker}(\pi)$ is contained in 
$\mbox{ker}(\theta)$ and so we can replace $\pi$ with $\theta$ for the 
rest of the proof.

Thus suppose that there is some $a\in A$ which is in the kernel of 
$\theta$. We know that
\[a=\sma{ccc}\boxed{T_1}& &0\\
&\ddots&\\
0& &\boxed{T_k}\fma\]
for upper triangular matrices $T_i$
and as the diagonal entries of $T_i$ are constant, say $\mu_i$
for $\mu_1,\ldots ,\mu_k\in \F^*$,
we conclude that $\mu_i^{d_i}=1$ where $d_i=\mbox{dim}(V_i)$.
In other words a power of $a$ is
unipotent, thus if $G$ is NIU-linear then $a$ has finite order. 

In the case where $G$ is VUF-linear with a finite index subgroup
$H$ having no non trivial unipotent elements, we have that $H\cap A$ has
finite index in $A$ so we will
take the restriction of $\theta$ to $H\cap A$
and show that this has finite kernel. 
If we have elements $u_1,u_2\in H\cap A$ with exactly
the same diagonal entries then $u_1^{-1}u_2$ must be unipotent and so
$u_1=u_2$. But on considering the diagonal entries
of an upper triangular 
element in $\mbox{ker}(\theta)$, we see they are all roots of unity
with bounded exponent
and so there are only finitely many possibilities, thus also only
finitely many possibilities for elements of $H\cap A$ which are also
in the kernel of $\theta$.
\end{proof}

In particular any infinite order element of $A$ also has infinite order
in the abelianisation of $G$. We now adapt this to obtain the same
conclusion of Theorem \ref{know2} Part (iv) in the case of NIU-linear groups. 
\begin{co} \label{coiv}
If $H$ is a finitely generated NIU-linear group and 
$A\cong\Z^n$ is central in $H$ then there exists a subgroup
of finite index in $H$ that contains $A$ as a direct factor.
\end{co}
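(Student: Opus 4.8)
The plan is to reduce the statement to producing a retraction onto $A$ defined on some finite index subgroup of $H$. Indeed, suppose I can find a subgroup $K\leq H$ of finite index with $A\leq K$ together with a homomorphism $r:K\to A$ that restricts to the identity on $A$. Since $A$ is central in $H$, hence central (in particular normal) in $K$, and $\ker(r)$ is normal with $A\cap\ker(r)=1$ and $A\cdot\ker(r)=K$ (as $k=r(k)\cdot(r(k)^{-1}k)$ with $r(k)\in A$ and $r(k)^{-1}k\in\ker(r)$), I would get $K=A\times\ker(r)$. So everything comes down to building such a retraction on a finite index subgroup.

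First I would use Theorem \ref{det}(i): as $H$ is NIU-linear and $A$ is central, the abelianisation map $\pi:H\to H/H'$ has torsion kernel. Since $A\cong\Z^n$ is torsion free, $A\cap\ker(\pi)=1$, so $\pi|_A$ is injective and $\pi(A)\cong\Z^n$. As $H$ is finitely generated, $H/H'$ is a finitely generated abelian group, say $H/H'\cong\Z^m\oplus F$ with $F$ finite; composing $\pi$ with the projection onto the free part gives a homomorphism $\phi:H\to\Z^m$. A torsion free subgroup meets the torsion kernel $F$ trivially, so $\phi|_A$ is still injective and $\phi(A)$ is a free abelian subgroup of $\Z^m$ of rank $n$.

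Then I would saturate: let $S=\{x\in\Z^m:kx\in\phi(A)\text{ for some }k\geq1\}$ be the isolator of $\phi(A)$, which is a rank $n$ direct summand of $\Z^m$ containing $\phi(A)$ with finite index. Writing $\Z^m=S\oplus T$ and letting $\rho:\Z^m\to S$ be the projection, set $\psi=\rho\circ\phi:H\to S\cong\Z^n$. Since $\phi(A)\subseteq S$, we have $\psi|_A=\phi|_A$, which is injective with image $\phi(A)$ of finite index in $S$; as $\psi(H)\subseteq S$, the subgroup $\psi(A)=\phi(A)$ has finite index in $\psi(H)$ as well. Hence $K=\psi^{-1}(\psi(A))$ has finite index in $H$ and contains $A$, and $\psi|_A:A\to\psi(A)$ is an isomorphism. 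Composing $\psi|_K:K\to\psi(A)$ with $(\psi|_A)^{-1}$ yields the desired retraction $r:K\to A$, which is the identity on $A$ by construction, and the reduction of the first paragraph finishes the proof.

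The genuinely substantive input here is Theorem \ref{det}, which forces $\pi|_A$ to be injective; this is exactly the property that fails for the distorted examples of Section 1 (for instance the central $\langle z\rangle$ in the Heisenberg group dies in the abelianisation). Once injectivity is in hand, the remaining steps are routine facts about finitely generated abelian groups (structure theorem, saturation to a direct summand, preimages of finite index subgroups), so I do not expect any real obstacle beyond correctly assembling them.
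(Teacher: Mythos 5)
Your proof is correct and takes essentially the same route as the paper's: Theorem \ref{det} supplies a homomorphism to an abelian group injective on $A$, the structure theorem for finitely generated abelian groups yields a map to $\Z^n$ still injective on $A$ with finite-index image, and the desired finite index subgroup is the preimage of the image of $A$. Your retraction formulation is equivalent to the paper's internal direct product decomposition (your $K=\psi^{-1}(\psi(A))$ is exactly the paper's $KA$ with $K=\mbox{Ker}(\phi\theta)$, and your $\ker(r)$ is the paper's $K$); the only real difference is that you spell out, via the isolator/direct-summand argument, the step the paper merely asserts when composing with a projection onto $\Z^n$.
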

\begin{proof} Theorem \ref{det} gives us a homomorphism $\theta$ from 
$H$ to some abelian group $C$ which is injective on $A$. By dropping to
the image, we can assume that $\theta$ is onto without loss
of generality and so $C$ is also finitely generated. By the classification
of finitely generated abelian groups, we have that $C=\Z^m\oplus$Torsion
for $m\geq n$ and we can compose $\theta$ with a homomorphism $\phi$ from
$C$ to $\Z^n$ in which $A$ still injects, so will have finite index.

Thus if we set $K=\mbox{Ker}(\phi\theta)$ then the pullback 
$(\phi\theta)^{-1}(A)=KA$ has finite index in $H$. Also $K$ and $A$ are
normal subgroups of $H$ with $K\cap A=\{e\}$, giving $KA\cong K\times A$.
\end{proof}

\section[Small subgroups of linear groups]{Small subgroups of 
NIU-linear and VUF-linear groups}

Suppose that $\F$ is an algebraically
closed field of any characteristic, and that $S$ is a solvable subgroup
of $GL(d,\F)$.

A consequence of the Lie - Kolchin Theorem, or alternatively results 
of Malce'ev, is that there is a finite index subgroup $T$
of $S$ which is upper triangularisable, namely conjugate in $GL(d,\F)$ to
a subgroup of $GL(d,\F)$ where every element is upper triangular.
Thus on assuming that we have conjugated $S$ within $GL(d,\F)$
so that $T$ is in this upper triangular form, we immediately see 
there is a homomorphism $h$ from $T$ to the abelian
group $(\F^*)^d$ given by the diagonal elements of an element $t\in T$. 
As the kernel of $h$ consists only of upper unitriangular matrices, meaning
that it must be a nilpotent group, we obtain the well known:
\begin{prop} \label{solv}
A solvable linear group $S$ over an arbitrary field is
virtually (nilpotent by abelian), meaning that $S$ possesses a
finite index subgroup which has an abelian quotient with
nilpotent kernel.
\end{prop}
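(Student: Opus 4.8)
The plan is to follow directly the setup established in the paragraph preceding the statement. First I would reduce to the case that $\F$ is algebraically closed, since $S$ embeds in $GL(d,\overline{\F})$ and all the relevant notions (solvability, finite index, being virtually nilpotent-by-abelian) are preserved under this embedding. With $\F$ algebraically closed, the key input is the Lie--Kolchin theorem, or Mal'cev's refinement in positive characteristic, which supplies a finite index subgroup $T\leq S$ that is upper triangularisable. After conjugating within $GL(d,\F)$ I may assume every element of $T$ is genuinely upper triangular, and this conjugation does not affect the index of $T$ in $S$.

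Next I would extract the diagonal homomorphism. For upper triangular matrices the map $h$ sending $t\in T$ to its tuple of diagonal entries in $(\F^*)^d$ is a homomorphism, because the diagonal of a product of two upper triangular matrices is the entrywise product of their diagonals. Since the target $(\F^*)^d$ is abelian, the image $h(T)$ is an abelian quotient of $T$, and $h$ itself is the desired quotient map onto an abelian group.

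It then remains to analyse the kernel. An element of $\ker(h)$ is an upper triangular matrix all of whose diagonal entries equal $1$, that is, an upper unitriangular matrix, so $\ker(h)$ is a subgroup of the full unitriangular group in $GL(d,\F)$. The latter is nilpotent of class at most $d-1$, its lower central series descending through the successive superdiagonals, hence $\ker(h)$ is nilpotent. This exhibits $T$ as nilpotent-by-abelian, and since $T$ has finite index in $S$ the group $S$ is virtually nilpotent-by-abelian, as claimed.

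The only genuinely substantial ingredient is the Lie--Kolchin/Mal'cev step producing the finite index triangularisable subgroup; everything after that is routine. I expect the main subtlety to lie in the positive characteristic case, where the pure Lie--Kolchin theorem (which requires Zariski connectedness) does not apply directly, so that one must invoke Mal'cev's theorem to ensure that passage to a finite index subgroup is precisely what is needed to achieve triangularisability. In characteristic zero the passage to finite index is already forced by working with the Zariski closure and its identity component.
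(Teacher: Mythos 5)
Your proposal is correct and follows essentially the same route as the paper: the paragraph preceding the proposition in the paper is precisely this argument, namely applying Lie--Kolchin/Mal'cev to obtain a finite index upper triangularisable subgroup $T$, conjugating so $T$ is upper triangular, and taking the diagonal homomorphism $h:T\rightarrow(\F^*)^d$ whose kernel is unitriangular and hence nilpotent. Your additional remarks (the explicit reduction to $\overline{\F}$, the nilpotency class bound $d-1$, and the role of Mal'cev's theorem versus the connected-group form of Lie--Kolchin) are accurate elaborations of details the paper leaves implicit.
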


To improve on this result we will first use NIU-linearity and 
VUF-linearity, then further
strengthen it by assuming that $S$, which might not be finitely
generated, is in fact a subgroup of a finitely generated linear
group.
\begin{co} \label{cos}
Suppose that $S$ is a solvable group and is NIU-linear.
Then $S$ is virtually (torsion by abelian) and if we are in characteristic
zero then $S$ is in fact virtually abelian (although $S$ need not be 
finitely generated in any characteristic).
\end{co}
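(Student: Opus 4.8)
The plan is to refine the proof of Proposition \ref{solv} by feeding in the NIU-linearity hypothesis exactly at the point where the nilpotent kernel appears. First I would pass to the algebraic closure $\overline{\F}$ of the ground field; since unipotency is tested over the algebraic closure and is a conjugacy invariant, the group $S$ stays NIU-linear after this field extension and after any conjugation inside $GL(d,\overline{\F})$. This lets me run the Lie--Kolchin setup verbatim: there is a finite index subgroup $T\leq S$ which I may assume to be upper triangular, together with the diagonal homomorphism $h:T\to(\F^*)^d$ recording the diagonal entries, whose kernel consists precisely of the upper unitriangular, hence unipotent, elements of $T$.

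The key observation is then immediate. Because $S$ is NIU-linear, every element of $\mbox{ker}(h)$, being unipotent, has finite order, so $\mbox{ker}(h)$ is a torsion group. Since $T/\mbox{ker}(h)$ injects into the abelian group $(\F^*)^d$, the subgroup $T$ is torsion by abelian, and as $T$ has finite index in $S$ this already shows that $S$ is virtually (torsion by abelian), which is the first assertion.

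For the characteristic zero statement I would invoke the remark following the definition of NIU-linear: in characteristic zero the only unipotent element of an NIU-linear group is the identity (equivalently, by Proposition \ref{diag}(ii), a unipotent element of finite order is trivial). Hence $\mbox{ker}(h)$ is now trivial, $h$ is injective on $T$, and so $T$ is itself abelian, giving that $S$ is virtually abelian.

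I do not anticipate a serious obstacle, since this is essentially a one line strengthening of Proposition \ref{solv} once the right hypothesis is inserted; the only points needing care are verifying that NIU-linearity is preserved under the passage to $\overline{\F}$ and the conjugation into triangular form, and observing that no finite generation of $S$ is used anywhere in the argument, which is precisely why the parenthetical claim that $S$ need not be finitely generated causes no difficulty.
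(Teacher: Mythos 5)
Your proof is correct and takes essentially the same route as the paper: both insert NIU-linearity exactly at the unipotent kernel of the diagonal homomorphism $h$ from Proposition \ref{solv}, concluding via Proposition \ref{diag} that $\mbox{ker}(h)$ is torsion in positive characteristic and trivial in characteristic zero. The one small difference concerns the parenthetical claim, which is an existence assertion: the paper justifies it with explicit infinitely generated examples ($\bigoplus_{i=-\infty}^{+\infty}C_p$ inside $C_p\,\wr\,\Z$, and the group of all diagonal matrices over $\Q$), whereas you only note that the argument never uses finite generation of $S$ — that shows the conclusion applies to non-finitely-generated groups but does not by itself exhibit any.
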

\begin{proof} The kernel of $h$ in Proposition \ref{solv} consists
entirely of unipotent elements. But as $S$ is
NIU-linear, we have by Proposition \ref{diag} that $\mbox{ker}(h)$ is a
torsion group in positive characteristic and $\mbox{ker}(h)=\{id\}$
in zero characteristic.

We have already seen that the wreath product $C_p\,\wr\,\Z$ is
linear in characteristic $p>0$ and this contains the infinitely
generated abelian group $\bigoplus_{i=-\infty}^{+\infty} C_p$.
As for characteristic zero, 
we can take all diagonal matrices over $\F=\Q$ say in any particular
dimension to get an NIU-linear and VUF-linear
group which is countable and abelian but not finitely generated.
\end{proof}

Next we show the equivalent result for NIU-linear groups of Theorem
\ref{know2} Part (i).
\begin{prop} \label{coi}
Suppose that $S$ is polycyclic and NIU-linear, then $S$ is virtually abelian.
\end{prop}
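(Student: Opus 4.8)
The plan is to reduce quickly to positive characteristic and then to exploit that polycyclicity forces the relevant unipotent part to be finite. Since $S$ is polycyclic it is in particular solvable and finitely generated, and every subgroup of $S$ is again polycyclic, hence finitely generated (the maximal condition). In characteristic zero there is then nothing left to do beyond what we already have: a polycyclic group is solvable, so Corollary \ref{cos} applies directly and tells us that $S$ is virtually abelian. So assume from now on that $\F$ has characteristic $p>0$.

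Following the proof of Proposition \ref{solv}, I would pass to a finite index subgroup $T$ of $S$ that is upper triangularisable and take the diagonal homomorphism $h\colon T\to(\F^*)^d$, whose kernel $U=\ker(h)$ consists of upper unitriangular matrices and is therefore nilpotent. By Proposition \ref{diag}(i) every element of $U$ is unipotent of finite order, so $U$ is a torsion group. Here is the crucial point where polycyclicity enters: $U$ is a subgroup of $S$, hence polycyclic, hence finitely generated, and a finitely generated nilpotent torsion group is finite, so $U$ is finite. This is exactly what fails for the lamplighter-type examples of Section 1, where the analogue of $U$ is an infinitely generated torsion group; the maximal condition coming from polycyclicity is precisely what rules these out.

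It then remains to deduce that $T$, and hence $S$, is virtually abelian from the fact that $T$ is finitely generated with $T/U$ abelian and $U$ finite, that is, that $[T,T]\le U$ is finite. I expect this purely group-theoretic step to be the main obstacle. The plan is to pass to $C=C_T(U)$, which has finite index in $T$ because $\mathrm{Aut}(U)$ is finite; then $[C,C]\le[T,T]\le U$ is centralised by $C$, so $[C,C]\le Z(C)$ and $C$ is nilpotent of class at most $2$ with finite derived subgroup. Writing $C=\langle x_1,\dots,x_r\rangle$ and $m=\exp([C,C])$, the class-$2$ identity $[x_i^m,x_j]=[x_i,x_j]^m=1$ shows each $x_i^m$ is central, so $\langle x_1^m,\dots,x_r^m\rangle$ is an abelian central subgroup whose quotient in $C$ is a finitely generated nilpotent group generated by torsion elements, hence finite. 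Thus $C$, and therefore $S$, is virtually abelian.

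Alternatively, in positive characteristic one can shortcut the finiteness of $U$ altogether: since $S$ is finitely generated and linear in positive characteristic it is VUF-linear (the result of \cite{wehbk} quoted after the definition of VUF-linear), so some finite index subgroup $H$ has no non-trivial unipotent elements. Triangularising a finite index subgroup $T$ of the solvable group $H$ then makes $\ker(h)$ trivial, since its elements are unipotent and lie in $H$, so $T$ embeds in $(\F^*)^d$ and is abelian. Either way the linearity is used only to produce the triangular form and the torsion unipotent kernel, and the remaining difficulty is entirely about controlling a finitely generated finite-by-abelian group.
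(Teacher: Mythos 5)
Your main argument is correct and is essentially the paper's proof: characteristic zero is handled by Corollary \ref{cos}, and in positive characteristic you triangularise a finite index subgroup, note that $\ker(h)$ consists of unipotent hence torsion elements by Proposition \ref{diag}(i), and use polycyclicity (the maximal condition) to conclude that $\ker(h)$ is finitely generated and hence finite. The only divergence is the final purely group-theoretic step: the paper intersects finitely many finite index subgroups of the residually finite group $T$, one missing each nontrivial element of the finite kernel, to obtain a finite index subgroup on which $h$ is injective, whereas you pass to $C=C_T(\ker h)$ and run the class-$2$ power argument. Both finishes are standard and correct.

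Your alternative shortcut, however, is wrong as stated. Finite generation plus linearity in positive characteristic does \emph{not} imply VUF-linearity: the lamplighter group $C_p\,\wr\,\Z$ of Example 1.2 is finitely generated and linear over $\F_p(t)$, yet it cannot be VUF-linear over any field, since Theorem \ref{alt} would then force it to be virtually abelian. You have also reversed the content of the quoted result from \cite{wehbk}: Corollary 4.8 produces a finite index subgroup whose \emph{finite order elements are all unipotent}, which is the opposite containment to the one defining VUF-linearity (unipotent implies trivial), and the paper uses it only to show that a finitely generated VUF-linear group in positive characteristic is virtually torsion free, not the converse. The shortcut can be repaired, because polycyclic groups are virtually torsion free (they are virtually poly-(infinite cyclic)), and in positive characteristic a virtually torsion free linear group is VUF-linear, after which Theorem \ref{alt} applies; but that repair rests on the virtual torsion-freeness of polycyclic groups, not on the Wehrfritz citation. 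As written the justification is false, so either make that repair explicit or delete the shortcut --- your main argument stands on its own.
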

\begin{proof}
We are done in characteristic zero by Corollary \ref{cos} above.
Moreover $S$ being polycyclic means that all subgroups of $S$ are 
finitely generated, in particular $\mbox{ker}(h)$ which is also
a solvable torsion group and thus is finite.
Hence $T$ is finite by abelian as well as finitely generated, so by
standard results it is virtually abelian (for instance: $T$ is
certainly residually finite, so we can take finitely many
finite index subgroups of $T$, each missing an element of $\mbox{ker}(h)$,
and their intersection injects under $h$ so is abelian).
\end{proof}

We now move to considering solvable groups $S$ which are VUF-linear.
We can obtain strong results if $S$ is
finitely generated, but in fact it is enough to assume that $S$
is merely contained in some linear group $G$ which is finitely generated.   
This is because we can then
utilise the fact that $G$
can be thought of as a subgroup not only of $GL(d,\F)$ for $\F$ a
finitely generated field, but (by taking the ring generated by all
entries of a generating set for the group which is closed under inverses)
also of $GL(d,R)$ where $R$ is an integral domain which is finitely
generated as a subring of $\F$. This approach is exploited in \cite{wehbk}
Chapter 4 and allows us here to obtain a much better result on small
subgroups in line with word hyperbolic or CAT(0) groups.
\begin{thm} \label{alt}
Suppose that the solvable group $S\leq GL(d,\F)$ is VUF-linear. Then
$S$ is virtually abelian. If further we have that $S$ is a subgroup of
$G\leq GL(d,\F)$ where $G$ is finitely generated 
then $S$ is also finitely generated.
\end{thm}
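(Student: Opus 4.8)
The plan is to split the statement into its two assertions and treat them in turn, reusing the triangularisation set-up of Proposition~\ref{solv}. For the first assertion, let $T$ be the finite index upper triangularisable subgroup of $S$ supplied by Proposition~\ref{solv} (conjugated so that $T$ is genuinely upper triangular over $\overline{\F}$), and let $h\colon T\to(\F^*)^d$ be the diagonal homomorphism, whose kernel consists of unitriangular, hence unipotent, matrices. Since $S$ is VUF-linear there is a finite index subgroup $H\le S$ containing no non identity unipotent element; set $B=T\cap H$, which still has finite index in $S$. Then the kernel of $h|_B$ lies in $H$ and consists of unipotents, so it is trivial, and $h$ injects $B$ into the abelian group $(\F^*)^d$. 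Thus $B$ is abelian and $S$ is virtually abelian. This is exactly where VUF-linearity rather than mere NIU-linearity is needed: in positive characteristic $\ker h$ would otherwise only be forced to be torsion, as the lamplighter examples show.

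For the second assertion it suffices to prove that $B$ is finitely generated, since a group possessing a finitely generated finite index subgroup is itself finitely generated (adjoin finitely many coset representatives). Using that $G$ is finitely generated I would pass, as in \cite{wehbk} Chapter~4, to $G\le GL(d,R)$, where $R$ is the subring of $\F$ generated by the entries of a finite generating set of $G$ that is closed under taking inverses; thus $R$ is an integral domain which is finitely generated \emph{as a $\Z$-algebra}. This last point is crucial: in characteristic zero $R$ is a finitely generated $\Z$-algebra and need not contain $\Q$ (in positive characteristic $R$ is a finitely generated $\F_p$-algebra, hence also finitely generated over $\Z$). Write $K=\mathrm{Frac}(R)$, so $B\le GL(d,K)$.

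The heart of the argument is to place all eigenvalues occurring in $B$ inside a single finite extension of $K$. Because $B$ is abelian, the $K$-subalgebra $A_0=K[B]\subseteq M_d(K)$ is commutative of dimension at most $d^2$, so it is spanned over $K$ by finitely many elements $b_1,\dots,b_N\in B$, and every $b\in B$ is a $K$-combination of the $b_i$. Each diagonal character $h_j$ (the scalar by which $B$ acts on the $j$th graded piece of a common triangularising flag over $\overline K$) extends to a $K$-linear map $A_0\to\overline K$, so $h_j(b)$ is a $K$-combination of the finitely many eigenvalues $h_j(b_i)$. Hence every eigenvalue occurring in $B$ lies in the finite extension $K'$ of $K$ generated by the eigenvalues of $b_1,\dots,b_N$, a compositum of finitely many splitting fields. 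Moreover each $h_j(b)$ is integral over $R$, as is its inverse $h_j(b^{-1})$, so $h_j(b)$ is a unit of $\mathcal O$, the integral closure of $R$ in $K'$; by finiteness of integral closure $\mathcal O$ is again a finitely generated $\Z$-algebra and an integral domain.

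Finally I would invoke the generalised Dirichlet unit theorem for finitely generated integral domains, which gives that $\mathcal O^*$ is a finitely generated abelian group. Since $h|_B$ is injective and lands in $(\mathcal O^*)^d$, the subgroup $h(B)\cong B$ of this finitely generated abelian group is itself finitely generated, and therefore so is $S$. The main obstacle is precisely this middle step of confining the eigenvalues to a finite extension and then to the units of a finitely generated domain; both the finite-dimensionality of $K[B]$ and the requirement that $R$ (hence $\mathcal O$) be finitely generated over $\Z$ rather than over a field are essential, since the unit group of, for instance, $\Q[t]$ is not finitely generated.
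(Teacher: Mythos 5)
Your proof is correct and follows essentially the same route as the paper: triangularise a finite index subgroup, note that the kernel of the diagonal homomorphism $h$ consists of unipotents so VUF-linearity forces it to be trivial, and then for finite generation trap the image of $h$ inside $({\cal U})^d$ for $\cal U$ the unit group of a finitely generated integral domain, which is finitely generated abelian by the generalised Dirichlet unit theorem. The only difference is that where the paper delegates this confinement step to \cite{wehbk} Lemma 4.10, you prove it directly (finite dimensionality of $K[B]$ to pin the eigenvalues in a single finite extension, then integrality of eigenvalues of $b$ and $b^{-1}$ plus Noether finiteness of the integral closure), which is a legitimate and careful filling-in of the cited lemma rather than a different approach.
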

\begin{proof}
We first replace $S$ with the appropriate finite index subgroup
$P$ which is unipotent free. 
We next assume that $\F$ is algebraically closed and proceed as in
Proposition \ref{solv} and Corollary \ref{cos} by  
conjugating $S$ in $GL(d,\F)$ so that $P$ has a finite
index subgroup $T$ which is upper triangular. Now
the homomorphism $h$ from $T$ to $(\F^*)^d$ has 
kernel consisting only of unipotent elements but $T$ is unipotent free,
so $T$ is abelian with $P$ and $S$ being virtually abelian.

Now suppose that $S$ is contained in the subgroup $G$ of $GL(d,\F)$
with $G$ finitely generated. We then follow
\cite{wehbk} Lemma 4.10 and see that, as $T$ is a subgroup of
$GL(d,R)$ for $R$ a finitely generated subring of the field $\F$
obtained from the entries of a symmetric generating set for $G$,
our homomorphism $h$ actually has image
in ${\cal U}^d$ for $\cal U$ the group of units of $R$, which
happens to be a finitely generated abelian group,
so $T$ and thus $P$ and $S$ are also finitely generated.
\end{proof}

We can now make this result on small subgroups of VUF-linear groups
definitive by bringing in the Tits alternative.

\begin{co} \label{sbgp}
Suppose that $G\leq GL(d,\F)$ is a finitely generated VUF-linear group.
Then any subgroup $S$ of $G$ (whether finitely generated
or not) satisfies the following alternative: either $S$
contains a non abelian free subgroup or $S$ is virtually abelian
and finitely generated.  
\end{co}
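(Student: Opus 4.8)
The plan is to combine the Tits alternative with Theorem \ref{alt}, so that the real work is already done and this corollary becomes a bookkeeping argument about finite index. First I would record that, since $G$ is finitely generated and linear, every subgroup of $G$ satisfies the Tits alternative in the strong form stated in the introduction: any $S\leq G$, finitely generated or not, either contains a copy of $F_2$ or is virtually solvable. If $S$ contains $F_2$ there is nothing more to prove, so for the rest of the argument I would assume $S$ is virtually solvable and aim to upgrade this to ``virtually abelian and finitely generated''.

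In the virtually solvable case I would choose a solvable subgroup $S_0\leq S$ of finite index. The key observation is that all the hypotheses of Theorem \ref{alt} now hold for $S_0$: it is solvable by construction, and since $S_0\leq S\leq G$ with $G$ VUF-linear, Proposition \ref{clos}(i) tells us $S_0$ is itself VUF-linear; moreover $S_0$ sits inside the finitely generated group $G\leq GL(d,\F)$. Applying Theorem \ref{alt} to $S_0$ therefore yields both conclusions at once: $S_0$ is virtually abelian, and because it is a subgroup of the finitely generated linear group $G$ it is also finitely generated.

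It then remains to transfer these two properties from the finite-index subgroup $S_0$ up to $S$. For finite generation I would use the standard fact that a group possessing a finitely generated subgroup of finite index is itself finitely generated (adjoin finitely many coset representatives to a generating set of $S_0$). For virtual abelianness I would take an abelian subgroup $A\leq S_0$ of finite index; then $[S:A]=[S:S_0]\,[S_0:A]<\infty$, so $A$ has finite index in $S$ and hence $S$ is virtually abelian. This closes the argument.

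I expect the only genuinely delicate point to be the very first step, namely applying the Tits alternative to an $S$ that need not be finitely generated. This is exactly where one must exploit that $S$ lies inside the finitely generated $G$: one runs the alternative on the finitely generated subgroups of $S$ and uses the uniform bound (in terms of $d$) on the index and derived length of the solvable part to conclude that $S$ as a whole is virtually solvable, which is the content outlined earlier in this section. Everything after that is purely formal, as the substantive strengthening from ``virtually solvable'' to ``finitely generated virtually abelian'' has already been carried out inside Theorem \ref{alt}.
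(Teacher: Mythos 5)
Your overall architecture coincides with the paper's: use the Tits alternative to reduce to the virtually solvable case, pass to a solvable finite-index subgroup $S_0$, apply Theorem \ref{alt} to conclude $S_0$ is virtually abelian and finitely generated, and transfer both properties back to $S$ across the finite index (the paper compresses this bookkeeping into the phrase ``thus Theorem \ref{alt} applies''). That part of your argument is fine.

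The gap is in the step you yourself flag as delicate, and it matters only in positive characteristic. In characteristic zero there is nothing delicate at all: Tits' theorem applies to arbitrary linear groups, finitely generated or not, so $S$ is virtually solvable outright. But in positive characteristic your proposed mechanism --- run the alternative on the finitely generated subgroups of $S$ and invoke ``the uniform bound (in terms of $d$) on the index and derived length of the solvable part'' --- fails, because no such bound in terms of $d$ alone exists. The group $GL(2,\overline{\F_q})$ is locally finite, so every finitely generated subgroup is finite and in particular virtually solvable; yet it contains the groups $SL(2,\F_{q^n})$, whose solvable subgroups have index at least $q^n+1$, so the indices are unbounded and the whole group is not virtually solvable. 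This is precisely the counterexample the paper raises at this point. The correct repair must use not just $d$ but the finite generation of $G$: writing $G\leq GL(d,R)$ with $R$ the finitely generated integral domain generated by the entries of a symmetric generating set of $G$, the paper invokes \cite{wehbk} Lemma 10.12, which states that a subgroup of $GL(d,R)$, $R$ a finitely generated integral domain, all of whose 2-generator subgroups are virtually solvable is itself virtually solvable. Your sketch gestures at exploiting that $S$ sits inside the finitely generated $G$, but the bound you then assert depends only on $d$ and is false; moreover, deferring the step to ``the content outlined earlier'' is circular, since the outline promised in the introduction is exactly the Wehrfritz argument in this proof. Without that lemma (or an equivalent uniform bound valid over $GL(d,R)$, to which your local-union argument could then be applied), the positive characteristic case of the corollary remains unproved.
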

\begin{proof} The Tits alternative in characteristic zero tells
us that any linear group in this characteristic either contains a
non abelian free subgroup or is virtually solvable, thus 
Theorem \ref{alt} applies.

This does not quite work in positive characteristic, because although
the Tits alternative still holds as above for linear groups in positive
characteristic which are finitely generated, for an arbitrary
linear group we might only conclude that it is solvable by locally
finite (for instance $GL(d,\overline{\F_q})$ for $d\geq 2$ and
$\overline{\F_q}$
the algebraic closure of the finite field $\F_q$). However we
can again evoke finite generation of the ring $R$ in this situation
to conclude that if $S$ is a subgroup of a finitely generated group
which is linear in positive characteristic then either $S$ contains
a non abelian free subgroup or $S$ is indeed virtually solvable.
For instance \cite{wehbk} Lemma 10.12 states that if $R$ is a
finitely generated integral domain and $S$ is a subgroup of
$GL(d,R)$, which is the case for $S$ here, such that every 2-generator
subgroup of $S$ is virtually solvable, which is also the case for $S$
here if it does not contain a non abelian free subgroup, then 
$S$ itself is virtually solvable. 
\end{proof}

We note that in other settings we do not always have a Tits alternative
available. For instance it is currently unknown whether every finitely
generated subgroup of a CAT(0) group must either contain $F_2$ or
be virtually solvable, whereas there are finitely generated groups
acting properly and semisimply on CAT(0) spaces which fail this
alternative.

\section{Undistorted abelian subgroups}

In this section we prove Theorem \ref{undis} which states that
for a finitely generated NIU-linear group $G$,
all finitely generated abelian subgroups are
undistorted in $G$. Proposition 2.4 of \cite{lmr} 
showed this for cyclic subgroups
(in fact their statement concludes that cyclic subgroups
do not have exponential distortion but the proof given works for arbitrary
distortion). They use a fact from the paper \cite{tts}
establishing the Tits alternative which is that for any element of
infinite multiplicative order in a finitely generated field,
we can find an absolute value on the field which is not equal to
1 on this element. We will argue in this way but will need to work with
many absolute values simultaneously, as well as needing a separate
argument for units in number fields.

Suppose we have a group $G$ which is finitely generated but otherwise
arbitrary for now, along with a finitely generated abelian subgroup
$A$ of $G$. We will repeatedly use the fact that showing a finite index
subgroup of $A$ is undistorted in $G$ also establishes this for $A$ itself,
so without loss of generality we will say that 
$A=\langle a_1,\ldots ,a_m\rangle\cong\Z^m$ for some $m\geq 1$.

Suppose further that we have a function $f:G\rightarrow [0,\infty)$
with the following two properties:
\begin{eqnarray*}
&(i)&\mbox{ Subadditivity: }
f(gh)\leq f(g)+f(h)\mbox{ for all }g,h\in G\\
&(ii)&
\mbox{ Lower bound on $A$: 
There exists a real number $c>0$ such that for}\\
&&\mbox{ all }
(n_1,\ldots ,n_m)\in\Z^m,\mbox{ we have }
f(a_1^{n_1}\ldots a_m^{n_m})\geq c(|n_1|+\ldots+|n_m|).
\end{eqnarray*}
Then on taking any finite generating set $S$ for $G$ and denoting
$l_S(g)$ for the standard word length of an element $g\in G$ with respect
to $S$, we have by (i) that $l_S(g)C\geq f(g)$ on setting
$C=\mbox{max}\{f(s^{\pm 1}):s\in S\}$. Now $C\geq 0$ and indeed
$C>0$ unless $f\equiv 0$, in which case (ii)
cannot hold. Thus if
$a=a_1^{n_m}\ldots a_m^{n_m}$ is an arbitrary element of $A$ then 
\[l_S(a)\geq \frac{1}{C}f(a)\geq\frac{c}{C}(|n_1|+\ldots +|n_m|)
\mbox{ with }c/C>0,\]
so $A$ is undistorted in $G$.

For linear groups, at least over $\C$, we have a natural candidate
for this function $f$. Indeed if we now suppose that $G$ is a subgroup of
$GL(d,\C)$ then we can put the max norm $||\cdot ||_\infty$ on $\C^d$
and the corresponding operator norm
\[||M||_{\mbox{op}}=\mbox{sup} \{||Mv||_\infty/||v||_\infty:v\in \C^d,
v\neq 0\}\]
on $GL(d,\C)$ which satisfies 
$||M_1M_2||_{\mbox{op}}\leq ||M_1||_{\mbox{op}}||M_2||_{\mbox{op}}$
for $M_1,M_2\in GL(d,\C)$ (or even for two $d$ by $d$ matrices over
$\C$), so the operator norm is a map from $G$ to $(0,\infty)$ which
is submultiplicative. Of course this can be made subadditive by taking
logs, but it might return negative values. Therefore we actually define
\[f(M)=\mbox{log max}(||M||_{\mbox{op}},||M^{-1}||_{\mbox{op}}).\]
This will be at least zero because if $\lambda$ is
an eigenvalue with eigenvector $v\neq 0$, so that $Mv=\lambda v$,
then $||M||_{\mbox{op}}\geq |\lambda|$, but if $0<|\lambda|\leq 1$ then
$1/\lambda$ is an eigenvalue of $M^{-1}$ with $|1/\lambda|\geq 1$.

Now subadditivity for $f$ is easily checked using the submultiplicity
of $||\cdot||_{\mbox{op}}$, so we first review the proof
of Proposition 2.4 in \cite{lmr} for the case when our abelian
group $A=\langle M\rangle$ is infinite cyclic: if
$M\in GL(d,\C)$ possesses
an eigenvalue $\lambda$ with $|\lambda|>1$
then for $n>0$ we get $||M^n||_{\mbox{op}}\geq |\lambda|^n$, and if
$|\lambda|<1$ then $||M^{-n}||_{\mbox{op}}\geq |1/\lambda|^n$
so that 
\[f(M^n)=\mbox{log max}(||M^n||_{\mbox{op}},||M^{-n}||_{\mbox{op}})
\geq n\big|\mbox{log}|\lambda|\big|\]
and we can swap $M$ and $M^{-1}$ when $n<0$, hence $f(M^n)\geq c|n|$ for
$c=\big|\mbox{log}|\lambda|\big|$. Thus when $|\lambda|\neq 1$ we
will have $c>0$ and so $\langle M\rangle$ is undistorted in $G$.

However it is quite possible that all eigenvalues of
every element of $G$ have modulus 1, for instance if $G$ were
a group of orthogonal or unitary matrices (which is exactly
one of the cases we are considering).
We also need a generalisation of this argument to arbitrary fields, not just
the characteristic zero case.
In order to complete the proof for cyclic 
subgroups, \cite{lmr} refers back to the
famous proof \cite{tts} of the Tits alternative, and specifically 
Lemma 4.1 of this paper
which states that for any finitely generated field $\F$ and for any
element $z\in\F^*$ of infinite multiplicative order, there is an absolute
value $\ep$ on $\F$ with $\ep(z)\neq 1$. Here  if $\F$
is any field then an absolute value $\ep:\F\rightarrow [0,\infty)$
satisfies
\begin{eqnarray*}
&(i)&\ep(x)=0\mbox{ if and only if }x=0\\
&(ii)&\ep(xy)=\ep(x)\ep(y)\\
&(iii)&\ep(x+y)\leq \ep(x+y).
\end{eqnarray*} 

Thus to finish the cyclic case, suppose that
our group $G$ is a subgroup of $GL(d,\F)$ for $\F$
an arbitrary  field. We first note that
we can take $\F$ to be a finitely generated field without
loss of generality because $G$ is a finitely generated group.
Then for any element $M\in G$ we extend $\F$ to
$\F_M$ by adjoining the eigenvalues of $M$, so that $\F_M$ is still
finitely generated. Now if all eigenvalues of $M$ are roots of
unity then a power of $M$ is unipotent, which we are specifically
excluding. Thus there must be an eigenvalue $\lambda$ of $M$ and
an absolute  value $\ep$ on $\F_M$ such that $\ep(\lambda)\neq 1$, so 
we do indeed have that $|\mbox{log}\,\ep(\lambda)|=c>0$ in the above
and $\langle M\rangle$ is undistorted in $G$.

Now we move to the case when $A$ is a free abelian subgroup of rank $m$ in the
finitely generated group $G\leq GL(d,\F)$. However, although still following
the same path, the argument requires more work than the cyclic case.
In particular, difficulties are caused by the fact that
we can have $m>d$ in general as the next example shows:\\
\hfill\\
{\bf Example}: Let $\F=\Q$ and consider $\Z^2\cong 
A=\langle 2,3\rangle=G\leq
\Q^*=GL(1,\Q)$. Then on trying the obvious absolute value
$\ep(q)=|q|$ for $q\in \Q$ and setting
\[f(q)=\mbox{log }\mbox{max }(||q||_{\mbox{op}},||q^{-1}||_{\mbox{op}})
=\big|\mbox{log}|q|\big|,\]
Property (ii) above would require $c>0$
such that $f(2^{n_1}3^{n_2})\geq c(|n_1|+|n_2|)$. But 
$f(2^{n_1}3^{n_2})=|n_1\mbox{log}(2)+n_2\mbox{log}(3)|$ so this is
impossible because  
$\langle\mbox{log}(2),\mbox{log}(3)\rangle$ is 
dense in $\R$. However we also have
$p$-adic evaluations on $\Q$, in particular the 2-adic evaluation
$|\cdot|_2$ and the 3-adic evaluation $|\cdot|_3$. Let us now 
define $f$ as above but using either of these two evaluations in place of the
usual Euclidean absolute value. Then $f$ is still subadditive
but on setting $g=2^{n_1}3^{n_2}\in\Q^*$, we have
\[f(g)=\big|\mbox{log}(|g|_2)\big|=|n_1|\mbox{log}(2)
\mbox{ or }f(g)=\big|\mbox{log}(|g|_3)\big|=|n_2|\mbox{log}(3).
\] 
However in the first case we have $f(3^{n_2})=0$ and $f(2^{n_1})=0$
in the second, so Property (ii) fails badly here. Moreover
by Ostrowski's Theorem, every absolute value on $\Q$ is equivalent
either to the modulus or to a $p$-adic evaluation for some prime $p$,
all of which give rise to our function $f$ being subadditive but 
all fail Property (ii). The key now is to see that we can combine
different functions $f$ obtained from separate evaluations, because
the sum of two subadditive functions is also subadditive.

In particular, here we can set $f$ to be the function 
\[f(g)=\big|\mbox{log}(|g|_2)\big|+\big|\mbox{log}(|g|_3)\big|=
|n_1| \mbox{log}(2)
+|n_2| \mbox{log}(3)\]
which (as $\mbox{log}(2)<\mbox{log}(3)$) is
at least $(|n_1|+|n_2|)\mbox{log}(2)$,
thus providing the required lower bound on $A$. Of course as $A=G$
we have merely shown that $A$ is undistorted in itself, but we have used
Properties (i) and (ii) to establish this.

We can now proceed with the general argument. We first introduce
the functions $f$ that we will be using here.
\begin{prop} \label{subadd}
Let $\F$ any field, with $\ep_1,\ldots ,\ep_N$
any finite list of absolute values on $\F$.

Then for any dimension $d$
there exists a function $f:GL(d,\F)\rightarrow [0,\infty)$ which is
subadditive and with the following property: if $M\in GL(d,\F)$ has an
eigenvalue $\lambda\in\F^*$ then for all $1\leq i\leq N$ we get
$f(M)\geq\big|\log\ep_i(\lambda)\big|$.
\end{prop}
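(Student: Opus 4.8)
The plan is to build $f$ as a sum $f=\sum_{i=1}^N f_i$ of subadditive functions, one attached to each absolute value $\ep_i$: a finite sum of subadditive functions is again subadditive, and since each summand will be nonnegative we will have $f\geq f_i$ for every $i$. This reduces the proposition to the single-value case $N=1$, so it suffices to produce, for one absolute value $\ep$ on $\F$, a subadditive $f_\ep:GL(d,\F)\rightarrow[0,\infty)$ with $f_\ep(M)\geq\big|\log\ep(\lambda)\big|$ whenever $\lambda\in\F^*$ is an eigenvalue of $M$.

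For a single $\ep$ I would imitate the operator-norm construction already used in this section for $GL(d,\C)$, but with the complex modulus replaced by $\ep$. Put the max norm $||v||=\mbox{max}_{1\leq j\leq d}\,\ep(v_j)$ on $\F^d$. The triangle inequality for $\ep$ gives $||v+w||\leq||v||+||w||$ and multiplicativity gives $||\mu v||=\ep(\mu)||v||$ for $\mu\in\F$; crucially these hold verbatim for every absolute value, archimedean or not, so no case distinction is needed. The associated operator norm $||M||_{\mbox{op}}=\mbox{sup}\{||Mv||/||v||:v\neq 0\}$ is finite, being bounded by $\mbox{max}_i\sum_j\ep(M_{ij})$, and is submultiplicative with $||I||_{\mbox{op}}=1$. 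I then set $f_\ep(M)=\log\,\mbox{max}(||M||_{\mbox{op}},||M^{-1}||_{\mbox{op}})$, exactly as in the complex case.

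Two routine verifications finish the single-value case. Subadditivity of $f_\ep$ follows from submultiplicativity applied to both $MN$ and $(MN)^{-1}=N^{-1}M^{-1}$, combined with the elementary inequality $\mbox{max}(ab,a'b')\leq\mbox{max}(a,a')\,\mbox{max}(b,b')$ for nonnegative reals; nonnegativity follows from $||M||_{\mbox{op}}||M^{-1}||_{\mbox{op}}\geq||I||_{\mbox{op}}=1$. For the eigenvalue bound, the hypothesis $\lambda\in\F^*$ is precisely what lets me choose an eigenvector $v\in\F^d$, since $M-\lambda I$ is singular over $\F$ itself rather than merely over $\overline{\F}$; then $||Mv||=\ep(\lambda)||v||$ gives $||M||_{\mbox{op}}\geq\ep(\lambda)$, while $M^{-1}v=\lambda^{-1}v$ gives $||M^{-1}||_{\mbox{op}}\geq\ep(\lambda)^{-1}$, so $f_\ep(M)\geq\log\,\mbox{max}(\ep(\lambda),\ep(\lambda)^{-1})=\big|\log\ep(\lambda)\big|$. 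Taking $f=\sum_i f_{\ep_i}$ then yields $f(M)\geq f_{\ep_i}(M)\geq\big|\log\ep_i(\lambda)\big|$ for each $i$, as required.

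The main point to get right is not really an obstacle but a simplification worth flagging: because only the triangle inequality of $\ep$ is used, the max-norm construction is completely uniform across all absolute values, so the archimedean and non-archimedean cases need not be separated, and merging the finitely many $\ep_i$ is then immediate by summation. The only step that genuinely demands care is the source of the eigenvector, which is exactly why the statement restricts to eigenvalues lying in $\F^*$.
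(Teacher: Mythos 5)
Your proof is correct and follows essentially the same route as the paper: the max norm on $\F^d$ built from each $\ep_i$, the associated operator norm, the function $f_i(M)=\log\max(||M||_{\mathrm{op},i},||M^{-1}||_{\mathrm{op},i})$, and the sum $f=f_1+\ldots+f_N$, with the eigenvalue bound obtained exactly as you describe. The extra details you supply (finiteness and submultiplicativity of the operator norm, nonnegativity via $||M||_{\mathrm{op}}||M^{-1}||_{\mathrm{op}}\geq 1$, and the explicit eigenvector in $\F^d$ justified by $\lambda\in\F^*$) are points the paper leaves implicit, and they are verified correctly.
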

\begin{proof}
We first let $||\cdot ||_i$ be the max norm on the  vector space $\F^d$
with respect to $\ep_i$, so that
given $v=(v_1,\ldots ,v_d)\in \F^d$ we have
\[||v||_i=\mbox{max}\{\ep_i(v_1),\ldots ,\ep_i(v_d)\}.\]
We then let $||\cdot ||_{\mbox{op},i}$ be the corresponding
operator norm on $GL(d,\F)$, namely
\[||M||_{\mbox{op},i}=\mbox{sup} \{||Mv||_i/||v||_i:v\in \F^d,
v\neq 0\}\]
and define $f_i:GL(d,\F)\rightarrow [0,\infty)$ like before as
\[f_i(M)=\mbox{log }\mbox{max}(||M||_{\mbox{op},i},
||M^{-1}||_{\mbox{op},i})\]
for $M\in GL(d,\F)$. Then each $f_i$ is subadditive, thus so is
$f:=f_1+\ldots +f_N$. Now if $\lambda$ is an eigenvalue for $M$
then for each $i$ we certainly have
\[||M||_{\mbox{op},i}\geq\ep_i(\lambda)\mbox{ and }
||M^{-1}||_{\mbox{op},i}\geq 1/\ep_i(\lambda)\]
thus $f(g)\geq f_i(g)\geq \big|\mbox{log}\,\ep_i(\lambda)\big|$.
\end{proof}
We now come to the proof of the equivalent of Theorem \ref{know2} Part (ii).
\begin{thm} \label{undis} 
If $G$ is any finitely generated group which is NIU-linear
and $A$ is any finitely generated abelian subgroup
of $G$ then $A$ is undistorted in $G$.
\end{thm}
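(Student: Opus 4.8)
The goal is to reduce the general finitely-generated abelian case to an application of Proposition~\ref{subadd}, by producing a single subadditive function $f$ satisfying the lower bound (ii) on $A=\langle a_1,\ldots,a_m\rangle\cong\Z^m$. As in the discussion preceding the statement, I may assume $\F$ is finitely generated (since $G$ is) and that $A$ is free abelian of rank $m$. The heart of the matter is that a single absolute value will in general fail property (ii), as the $\langle 2,3\rangle\leq\Q^*$ example shows; the key idea supplied by Proposition~\ref{subadd} is that finitely many absolute values may be summed, their contributions being individually subadditive.

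\textbf{First step: triangularise the abelian group.}
Since $A$ is abelian, after replacing $\F$ by a finitely generated extension containing all eigenvalues of a generating set, I would simultaneously upper-triangularise $A$ over $\overline{\F}$ (by the remark opening the proof of Theorem~\ref{det}, an abelian subgroup of $GL(d,\overline\F)$ is conjugate to an upper-triangular one). Then each $a_j$ has diagonal entries, and I obtain $d$ characters $\chi_1,\ldots,\chi_d:A\to\F^*$ recording the diagonal entries. For $a=a_1^{n_1}\cdots a_m^{n_m}$ the $r$-th diagonal entry is $\prod_j\chi_r(a_j)^{n_j}$. The NIU hypothesis is exactly what guarantees no nonidentity element of $A$ is unipotent: so for every nonzero $(n_1,\ldots,n_m)$ at least one diagonal entry of $a$ is not a root of unity. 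Equivalently, the group homomorphism $A\to(\F^*)^d$ given by the diagonal has image an infinite (in fact torsion-free, after passing to finite index) subgroup, and its ``logarithms'' under suitable absolute values must detect every direction in $\Z^m$.

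\textbf{Second step: choose finitely many absolute values detecting all directions.}
This is where the real work lies, and I expect it to be the main obstacle. The subgroup $D\leq(\F^*)^d$ generated by all the diagonal entries $\chi_r(a_j)$ is finitely generated; write its free part as $\Z^k$. I want absolute values $\ep_1,\ldots,\ep_N$ on $\F$ so that the combined map
\[
a\longmapsto\bigl(\log\ep_i(\chi_r(a))\bigr)_{i,r}\in\R^{Nd}
\]
is injective with image a full-rank lattice direction-wise, i.e.\ so that $(n_1,\ldots,n_m)\mapsto\max_{i,r}\bigl|\log\ep_i(\chi_r(a))\bigr|$ is bounded below by $c(|n_1|+\cdots+|n_m|)$. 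Using Lemma~4.1 of \cite{tts} (for each infinite-order element of a finitely generated field there is an absolute value nontrivial on it) one gets, for each of finitely many generators of the torsion-free part of $D$, an absolute value separating it; the delicate point is upgrading ``nontrivial on each generator separately'' to ``the vector of logarithms is nondegenerate on all of $\Z^m$ simultaneously.'' I would argue this by a compactness/linear-independence argument on the unit sphere of $\R^m$: the set of $(n_1,\ldots,n_m)$ with $\sum n_j^2=1$ is compact, and for each such direction the corresponding product of diagonal entries has infinite order (by NIU), hence is separated by some $\ep_i$; finitely many $\ep_i$ then suffice by a covering argument, giving a uniform $c>0$. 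Two cases must be handled separately, exactly as flagged in the text: the generic case uses \cite{tts} directly, but when all the relevant elements are units in a number field one has only archimedean and finitely many chosen non-archimedean valuations available, and here I would invoke Dirichlet's unit theorem (the log-embedding of the unit group is a full lattice in the trace-zero hyperplane) to guarantee the archimedean absolute values already detect every direction.

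\textbf{Conclusion.}
Feeding $\ep_1,\ldots,\ep_N$ into Proposition~\ref{subadd} yields a subadditive $f:G\to[0,\infty)$ with $f(M)\geq|\log\ep_i(\lambda)|$ for any eigenvalue $\lambda$ of $M$. Applied to $M=a\in A$, whose eigenvalues are its diagonal entries, the direction-wise lower bound from the second step gives property (ii) with the uniform constant $c$. The general machinery preceding the statement then converts (i) and (ii) into the inequality $l_S(a)\geq(c/C)(|n_1|+\cdots+|n_m|)$, so $A$ is undistorted in $G$.
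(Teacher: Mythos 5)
Your overall architecture (triangularise $A$, produce finitely many absolute values, feed them into Proposition \ref{subadd}) matches the paper's, and you correctly isolate the crux: separating each generator is not enough, one needs a uniform lower bound on all of $\Z^m$. But your proposed resolution of that crux --- the compactness/covering argument on the unit sphere --- does not work, and this is a genuine gap. The NIU hypothesis (via Lemma 4.1 of \cite{tts}) only produces absolute values detecting \emph{group elements}, i.e.\ integer vectors; a point of the unit sphere with irrational coordinates corresponds to no element of $A$, so there is nothing to apply NIU or \cite{tts} to in that direction. Concretely, the paper's own example $A=\langle 2,3\rangle\leq\Q^*$ defeats the method: the archimedean absolute value separates \emph{every} nonzero integer vector (the map $(n_1,n_2)\mapsto n_1\log 2+n_2\log 3$ is injective on $\Z^2$), so a procedure that only asks for each rational direction to be detected could legitimately select this single absolute value every time; yet the lower bound fails because the linear extension to $\R^2$ kills the irrational line spanned by $(\log 3,-\log 2)$, whose neighbourhood on the sphere is never covered. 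Injectivity on the lattice of integer points simply does not imply injectivity of the linear extension, which is what property (ii) requires.

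The idea you are missing, and the one the paper uses, is \emph{discreteness} of the chosen absolute values. The paper first proves only injectivity of the combined log map $\theta:A\to\R^{md}$ (Lemma \ref{inj}, an induction adding one generator at a time), and then re-examines the proof of \cite{tts} Lemma 4.1 to show the absolute values can be taken discrete (valuations from prime ideals of number fields, or degree valuations on function fields, extended through completions and finite extensions). Once each $\log\ep_i$ has image in some $\lambda_{i,j}\Z$, the image $\theta(A)$ sits inside a lattice of $\R^{md}$, so injectivity alone forces $\theta(A)$ to be a rank-$m$ lattice in its span, and the uniform bound $\|\theta(a)\|\geq c(|n_1|+\cdots+|n_m|)$ comes for free. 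The exceptional case is precisely when all diagonal entries of some generators are algebraic units, so that every prime-ideal valuation vanishes on them; there the paper, like you, invokes Dirichlet's unit theorem --- but note that what Dirichlet supplies is again discreteness of the archimedean log image (a lattice in the trace-zero hyperplane), not that the archimedean values ``detect every direction'' per se, and the paper must then splice the two kinds of generators together, writing $A=A_u\oplus A_v$ and checking that the combined map $M\times\theta$ still has discrete image. So your proposal has the right skeleton and the right special case, but the step carrying the real weight in the generic case is unsound as stated.
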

\begin{proof}
We take $G$ to be a subgroup of $GL(d,\F)$, where $\F$ is some field
which we initially assume to be algebraically closed.
By dropping to a finite index subgroup of $A$ if required, we can assume
that $A=\langle a_1,\ldots ,a_m\rangle$ is free abelian of rank $m$.
Moreover
we can conjugate $G$ within $GL(d,\F)$ so that every element
of $A$ is upper triangular. Having done this, we henceforth assume that
$\F$ is the finitely generated field over the relevant prime subfield
which is generated by the entries of $G$.

By taking logs, any absolute value $\ep_i$ on $\F$ can also be thought of 
as a group homomorphism $\mbox{log}\,\ep_i$ from $\F^*$ to $\R$. Moreover
from a single absolute value $\ep_i$, we obtain $d$ functions $\theta_{i,j}$
from $A$ to $\R$ where $1\leq j\leq d$ by setting 
$\theta_{i,j}(a)=\mbox{log}\,\ep_i(a_{jj})$ for $a_{jj}\in \F^*$ the $j$th
diagonal entry of the element $a\in A$. As any $a$ is an upper triangular 
matrix, we see that $\theta_{i,j}:A\rightarrow\R$ is actually a
homomorphism.
\begin{lem} \label{inj}
There exist absolute values $\ep_1,\ldots ,\ep_m$ on $\F$
such that if we let $\theta:A\rightarrow\R^{md}$ be the homomorphism
obtained from the coordinate functions $\theta_{i,j}$ then $\theta$ is
injective.
\end{lem}
\begin{proof} We proceed by induction on $m$. We first write
$A=A_0\oplus\langle a_m\rangle$ for $A_0=\langle a_1,\ldots,a_{m-1}\rangle$
and we can
assume that there exist absolute values $\ep_1,\ldots ,\ep_{m-1}$
such that the resulting homomorphism $\theta_0:A\rightarrow \R^{(m-1)d}$,
obtained by combining the relevant
coordinate functions $\theta_{i,j}$, is injective
on $A_0$. If it is injective on $A$ too then we are happy and can just
add any $\ep_m$ to the list of absolute values. Otherwise $\theta_0$
vanishes on some element $a$ of $A$ which must be of the form 
$a=a_0a_m^n$ for
$a_0\in A_0$ and $n\neq 0$. We now pretend that actually $A$ was the
finite index subgroup $\langle a_1,\ldots ,a_{m-1},a_m^n\rangle$ which is
also equal to $\langle a_1,\ldots ,a_{m-1},a\rangle$
(with the subgroup $A_0$ unchanged).

Next we use Tit's Lemma 4.1 in \cite{tts}: as $\F$ is assumed
finitely generated, we have that for any $z\in\F^*$ of infinite order,
there exists an absolute value $\omega$ on a locally compact field containing
$\F$, and thus on $\F$ itself by restriction, such that $\omega(z)\neq 1$.
We now apply this to each diagonal element of $a$: if every one has finite
order then the eigenvalues of $a$ are all roots of unity and so some 
power of $a$ is unipotent, which is excluded by the NIU-linear
hypothesis unless $a$ has finite order but $A$ is free abelian of
rank $m$.

Consequently there is some $1\leq k\leq d$ where 
the $k$th diagonal element $a_{kk}\in\F^*$ of $a$ has infinite
order, thus there is also an absolute value 
$\omega:\F\rightarrow[0,\infty)$ with
$\omega(a_{kk})\neq 1$. We set $\ep_m=\omega$ so that 
$\theta_{m,k}(a)=\mbox{log}\,\ep_m(a_{kk})\neq 0$. Hence on letting
$\theta$ be the extension of $\theta_0$ from $A$ to $\R^{md}$ obtained
by including $\theta_{m,1},\ldots ,\theta_{m,d}$ as extra coordinate
functions, we have that $\theta$ is injective on $A$: suppose $\theta$
vanishes on an element of $A$, which can also be written as $a_0a^n$ for
some $a_0\in A_0$, so that
$\theta(a_0a^n)=0\in \R^{md}$. Then
looking at the first $(m-1)d$ coordinate functions $\theta_{i,j}$ tells
us that $a_0$ is the identity (because we suppose that here $a$ vanishes
under $\theta_0$ but that $A_0$ injects), whereas 
$\theta_{m,k}(a^n)=0$ implies $n=0$.
\end{proof}

An absolute value $\ep$ on a field $\F$ is called {\bf discrete}
if the image of $\ep:\F^*\rightarrow (0,\infty)$ is a discrete set,
which is equivalent to saying that there exists $\lambda>0$ such that
the image of the function $\mbox{log}\,\ep$ in $\R$ is $\lambda\Z$.
We now proceed on the assumption that all absolute values obtained
in Lemma \ref{inj} are discrete (thus in $\Q$ this would mean we
have taken the $p$-adic evaluations but not the modulus) and we
will remove this assumption at the end of the proof. 

We have the coordinate functions $\theta_{i,j}$ on $A$ and we can
now assume that there exists real numbers $\lambda_{i,j}>0$
with $\theta_{i,j}(A)\leq\lambda_{i,j}\Z\leq\R$. Thus by Lemma \ref{inj}
$A$ embeds via $\theta$ in the subgroup
\[\prod_{1\leq i\leq m,1\leq j\leq d}
\lambda_{i,j}\Z\qquad\mbox{ of }\R^{md}\]
which is clearly a lattice in $\R^{md}$, namely  a discrete subgroup
of $\R^{md}$ which spans. Thus on letting $V$ be the vector subspace
of $\R^{md}$ spanned by $\theta(A)$, which is itself a lattice
in $V$, we see that for $a=a_1^{n_1}\ldots a_m^{n_m}\in A$ and for
any norm $||\cdot ||$ on $V$, we have $c>0$ such that
\[||\theta(a)||\geq c(|n_1|+\ldots +|n_m|).\]
Thus if we put the norm on $V$ induced from the max norm on $\R^{md}$,
we obtain
\[\mbox{max}\,\{|\theta_{i,j}(a)|:1\leq i\leq m,1\leq j\leq d\}
\geq c(|n_1|+\ldots +|n_m|).\]
But $\theta_{i,j}(a)=\log\,\ep_i(a_{jj})$ for $a_{jj}$ the $j$th diagonal
entry of $a$. As $a$ is upper triangular, this is an eigenvalue of
$a$. Hence by applying Proposition \ref{subadd} with
$\ep_1,\ldots ,\ep_m$, we obtain our subadditive function
$f:G\rightarrow [0,\infty)$ where for any $a=a_1^{n_1}\ldots a_m^{n_m}\in A$
and $1\leq i\leq m,1\leq j\leq d$, we have
$f(a)\geq \big|\log\,\ep_i(a_{jj})\big|$ so that
\[f(a)\geq c(|n_1|+\ldots +|n_m|)\mbox{ for some }c>0.\]
Thus $f$ satisfies both Property (i) and Property (ii) and hence $A$ is
undistorted in $G$.
 
We must now consider when we are able to use absolute values which
are discrete, thus we turn again to \cite{tts} Lemma 4.1 but this
time we examine the proof rather than just the statement. Our field 
$\F$ is finitely generated over its prime subfield
$\PP$ (namely $\PP=\Q$ in characteristic zero and $\PP=\F_p$ in
characteristic $p>0$) and we suppose we are given a non zero element $t\in \F$ 
which is not a root of unity (so $t$ has infinite order in $\F^*$).
Let us first assume that $t$ is transcendental over $\PP$, in which case
this proof proceeds as follows. We set $\F_{\mbox{alg}}$ to be the subfield of
$\F$ consisting of those elements which are algebraic over $\PP$, with
$\F_{\mbox{alg}}$ being finitely generated over $\PP$ because $\F$ is 
and so $\F_{\mbox{alg}}$ is a finite extension of $\PP$. Moreover 
finite generation of $\F$ (now over $\F_{\mbox{alg}}$) 
implies that we have a finite
transcendence basis of $\F$ over $\F_{\mbox{alg}}$, namely
a finite number of elements $t_1,\ldots ,t_u\in \F$, where we can and do
take $t_1=t$, which are algebraically independent over 
$\F_{\mbox{alg}}$ and such that $\F$ is a finite extension of 
$\F_{\mbox{alg}}(t_1,\ldots ,t_u)$.

We set the field $\E$ to be $\F_{\mbox{alg}}$ in the characteristic zero
case and $\F_{\mbox{alg}}(t)$ in positive characteristic, and put a suitable
discrete absolute value $\ep$ on $\E$. For 
$\E=\F_{\mbox{alg}}(t)$ we can simply use $\ep(f(t)/g(t))=
e^{\mbox{deg}(f)-\mbox{deg}(g)}$ which is clearly
discrete and with $\ep(t)\neq 1$. In characteristic zero $\E=\F_{\mbox{alg}}$
is a number field and we can use any absolute value $\ep$ obtained from
a prime ideal, which is also discrete. In either case $\E$ is a global
field and thus we can extend $(\E,\ep)$ to its completion 
$(\widehat{\E},\ep)$ which is locally
compact and with $\ep$ still discrete.

Now $\F_{\mbox{alg}}(t_1,\ldots ,t_u)$ embeds in $\widehat{\E}$ 
as the latter field has infinite transcendence degree over $\E$
by countability reasons, thus we can now regard
$\ep$ as defined on 
$\F_{\mbox{alg}}(t_1,\ldots ,t_u)$. Moreover in the characteristic
zero case we can send $t=t_1$ to a transcendental element of $\widehat{\E}$
with $\ep(t)\neq 1$ (for instance if $\alpha\neq 0$ is algebraic with 
$\epsilon(\alpha)\neq 1$ but $\epsilon(t)=1$ then $\alpha t$ is transcendental
and $\epsilon(\alpha t)\neq 1$), thus we have $\ep(t)\neq 1$ in either event. 
Finally as $\widehat{\E}$ is complete and locally compact,
there exists a finite field extension
$\E'$ of $\widehat{\E}$ with $\ep$ extending to $\E'$ (still discretely) 
and $\F$ embedding in $\E'$, so $\ep$ is a well defined discrete absolute 
value on $\F$ with $\ep(t)\neq 1$.

In positive characteristic any non zero element of the algebraic extension
$\F_{\mbox{alg}}$ is a root of unity as $\F_{\mbox{alg}}$ is a finite field,
thus we have established the non distortion of $A$
in this case and we now assume we are in characteristic zero for the rest of 
the proof. But here the above 
argument also tells us that we are done unless we come across an element $a$
of our free generating set $a_1,\ldots ,a_m$ for $A$ with no diagonal
entry which is a transcendental number. However if $t$ is an algebraic
number, and hence here also an element of $\F_{\mbox{alg}}$, then
our argument above of the existence of a suitable 
discrete absolute value $\ep$ on $\F_{\mbox{alg}}$,
which is now a number field,
will also go through as long as we can obtain $\ep$
from evaluation at a prime ideal where 
$\ep(t)\neq 1$ and this will be discrete
(whereupon we do not take $t_1=t$ anymore in the above,
rather $t_1$ becomes merely the first element in an
arbitrary transcendence basis). 

Thus we are only left with the case where every diagonal entry of $a$ is
algebraic and thus an element of the number field $\F_{\mbox{alg}}$, but
such that evaluation of each diagonal entry at any prime ideal in
$\F_{\mbox{alg}}$ equals 1. Let $\cal O$ be the ring of integers of
$\F_{\mbox{alg}}$, so that we have unique factorisation of a fractional
ideal of $\cal O$ into prime ideals, with the powers appearing in the
factorisation given by evaluation at these prime ideals. This extends to 
elements $x\in\F_{\mbox{alg}}^*$ by factorising the fractional ideal
$x\cal O$. Thus if $x$ is any diagonal entry of $a$ then it must now
factor trivially, giving us $x\cal O=\cal O$ and thus both $x$ and
$x^{-1}$ are in $\cal O$ so that $x$ is a unit of $\F_{\mbox{alg}}$.
This means that we can use Dirichlet's units theorem, which states
that if $(r_1,r_2)$ is the signature of $\F_{\mbox{alg}}$ (where $r_1$
is the number of real embeddings of $\F_{\mbox{alg}}$ and $r_2$ is the
number of pairs of complex conjugate embeddings then the group of units
${\cal U}(\F_{\mbox{alg}})$ is a finitely generated abelian group of $\Z$-rank
$r_1+r_2-1$ with torsion group equal to the roots of unity of 
$\F_{\mbox{alg}}$. Moreover the theorem also states that if the real
embeddings are $\sigma_1,\ldots ,\sigma_{r_1}$ and $\sigma_{r_1+1},
\ldots ,\sigma_{r_1+r_2}$ is a choice of complex embeddings from each
complex conjugate pair then the map
\[L(x)=\big(\log{|\sigma_1(x)|},\ldots , \log{|\sigma_{r_1}(x)|},
2 \log{|\sigma_{r_1+1}(x)|},\ldots , 2 \log{|\sigma_{r_1+r_2}(x)|},\big )\]
is an abelian group homomorphism from 
${\cal U}(\F_{\mbox{alg}})$ to the subspace
\[S=\{(\lambda_1,\ldots ,\lambda_{r_1+r_2})\in\R^{r_1+r_2}:
\lambda_1+\ldots +\lambda_{r_1+r_2}=0\}\]
of $\R^{r_1+r_2}$ with image of dimension $r_1+r_2-1$ which is a discrete
subgroup.

We can now finish off as follows: if we come across a generator $a$ of $A$
all of whose diagonal entries are units in $\F_{\mbox{alg}}$, we must have some
diagonal entry $d\in\F_{\mbox{alg}}^*$ with $L(d)\neq 0$, or
else all these diagonal entries are roots of unity and so $a$ has
a power which is unipotent. Now given any field embedding 
$\sigma_i$ from $\F_{\mbox{alg}}$
to $\R$ or $\C$, we can extend this to $\F$ by sending the elements
$t_1,\ldots ,t_u$ of our transcendental basis to 
algebraically independent transcendentals
in $\R$ or $\C$, then defining $\sigma_i$ over the finite extension
up to all of $\F$. Consequently we have that $L$ extends to a map from
$\F$ to $\R^{r_1+r_2}$ which is still an abelian group homomorphism.

We now write our abelian subgroup $A$ as a direct sum $A_u\oplus A_v$
where the generators $a_1,\ldots ,a_u$ of $A_u$ have all diagonal entries
which are units, but such that the generators $a_{u+1},\ldots ,a_m$ of
$A_v$ do not have this property. We then proceed as before on $A_v$
with coordinate functions to obtain the group homomorphism
$\theta:A\mapsto\R^{(m-u)d}$ which embeds $A_v$ discretely, so we have
$k_1>0$ such that $||\theta(a)||\geq k_1$ for $a\in A_v\setminus\{id\}$
but $\theta(A_u)=0$ which means that $||\theta(a)||\geq k_1$ for any
$a$ in $A\setminus A_u$. We also take the map
$M:A\rightarrow\R^{(r_1+r_2)d}$ defined by the $j$ coordinate
functions $M_j$ for $1\leq j\leq d$ where $M_j$ applies the 
Dirichlet map $L$
to the $j$th diagonal entry $a_{jj}$ of $a\in A$. Although $A_v$ could be mapped
in a highly non discrete way by $M$, we have that $M$ sends $A_u$ into
a discrete subset of $S^d\leq \R^{(r_1+r_2)d}$ and thus there exists
$k_2>0$ such that $||M(a)||\geq k_2$ for $a\in A_u\setminus\{id\}$. Thus
we can put $M$ and $\theta$ together to obtain the map
\[M\times\theta:A\rightarrow \R^{(r_1+r_2)d}\times \R^{(m-u)d}\]
where we put the max norm on the product. Consequently  for any non identity
element $a\in A$ we have \[||(M\times\theta)(a)||=\mbox{max}(||M(a)||,
||\theta(a)||)\geq\mbox{min}(k_1,k_2)>0.\]
Thus $A$ embeds discretely under $M\times\theta$, giving us $c>0$
such that
\[||(M\times\theta)(a_1^{n_1}\ldots a_m^{n_m})||\geq
c(|n_1|+\ldots +|n_m|).\]

Finally we apply Proposition \ref{subadd} where our list of absolute
values on $\F$ is not just $\epsilon_1,\ldots ,\epsilon_{m-u}$ obtained
by applying Lemma \ref{inj} to $A_v$, but also
$|\sigma_1|,\ldots,|\sigma_{r_1+r_2}|$ obtained from the embeddings
of $\F$ into $\R$ and $\C$. This results in our subadditive function
$f:GL(d,\F)\rightarrow[0,\infty)$. Moreover for $a\in A$ we have that
each coefficient of $(M\times\theta)(a)$ is obtained by applying
one of our absolute values to a particular diagonal entry $a_{jj}$
(hence also an eigenvalue) of $a$ and then taking logs. As we have
$f(a)\geq|\log{\epsilon_i(a_{jj})|}$ for all $i$ and $j$ by
Proposition \ref{subadd}, we conclude that $f(a)\geq
||(M\times\theta)(a)||$. Thus $f$ does have both properties and
therefore $A$ is undistorted in $G$.
\end{proof}   

\section{Some applications}
\subsection{Mapping class groups}
Of course any finitely generated group $G$ which fails to
satisfy any of the four conditions in Theorem \ref{know2}
cannot act properly  and semisimply by isometries on a CAT(0) space.
As we have now shown that $G$ also cannot be NIU-linear, we can
look to see if any of these obstructions have been established
for well known groups in the course of showing that they
do not admit nice actions on CAT(0) spaces.   
An important example of this is the mapping
class group $Mod(\Sigma_g)$ where here
$\Sigma_g$ will be an orientable surface of finite topological type having
genus $g$ at least 3 (which might be closed or might have any
number of punctures or boundary components). 
In \cite{bri} Bridson shows that for all the
surfaces $\Sigma_g$ mentioned above, the mapping class group $Mod(\Sigma_g)$
does not admit a proper and semisimple action on a complete CAT(0) space,
a result first credited to \cite{kaplb}. This is done using the
following obstruction which is similar to Theorem \ref{know2} Part (iv).

\begin{prop} \label{bri} (\cite{bri} Proposition 4.2)\\
If $\Sigma$ is an orientable surface of finite type having genus at
least 3 (with any number of boundary components and punctures) and if $T$
is the Dehn twist about any simple closed curve in $\Sigma$ then the
abelianisation of the centraliser in $Mod(\Sigma)$ of $T$ is finite.
\end{prop}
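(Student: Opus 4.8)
The plan is to identify the centraliser $C(T)$ concretely and then compute its abelianisation from an exact sequence obtained by cutting along the twisting curve. Write $c$ for the simple closed curve with $T=T_c$. Since $\phi T_c\phi^{-1}=T_{\phi(c)}$ and an essential Dehn twist determines its curve up to isotopy, we have $\phi\in C(T_c)$ if and only if $\phi$ fixes the isotopy class of $c$; thus $C(T_c)$ is exactly the stabiliser $\mbox{Stab}(c)$ in $Mod(\Sigma)$. Cutting $\Sigma$ along $c$ yields a standard short exact sequence
\[1\to\langle T_c\rangle\to\mbox{Stab}(c)\to Q\to 1,\]
in which $\langle T_c\rangle\cong\Z$ is central (everything in the centraliser commutes with $T_c$) and $Q$ is a finite-index subgroup of the mapping class group of the cut surface $\Sigma_c$, the index accounting for the possible interchange of the two sides of $c$ or reversal of its orientation. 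The five-term exact sequence in homology then reduces the claim to two points: that $H_1(Q)$ is finite, and that the image of $T_c$ in $H_1(\mbox{Stab}(c))$ is torsion.

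For the first point I would invoke the known computations of the abelianisations of mapping class groups. When $c$ is non-separating, $\Sigma_c$ is connected of genus $g-1\geq 2$; when $c$ is separating, $\Sigma_c$ splits into pieces whose genera sum to $g\geq 3$, so at least one piece has genus $\geq 2$, and whenever one piece has small genus the other necessarily has genus $\geq 3$. In every case the relevant mapping class groups have finite first homology (trivial for genus $\geq 3$, and finite in the low genus pieces), so $H_1(Q)$ is finite. The care needed here is to ensure that the Dehn twists about the new boundary curves of $\Sigma_c$ do not contribute free abelian summands, and it is precisely at this step that the hypothesis $g\geq 3$ does real work.

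For the second point I would realise a lantern relation inside $\mbox{Stab}(c)$ with $c$ playing the role of one boundary curve of an embedded four-holed sphere. Because $g\geq 3$ leaves a complementary subsurface of genus at least $2$ on one side of $c$, such a lantern exists with its remaining six curves all disjoint from $c$; their twists then all lie in $C(T_c)=\mbox{Stab}(c)$, and the lantern relation expresses $T_c$ as a product of these six twists. Passing to $H_1(\mbox{Stab}(c))$ and using that the six twist classes are torsion in $H_1(Q)$ (by the first point), a coefficient computation exactly as in the standard proof that genus $\geq 3$ mapping class groups are perfect forces the class of $T_c$ to be torsion. The same lantern relations, with the problematic boundary curves of $\Sigma_c$ taken as the other three boundary components of the four-holed sphere, simultaneously kill the boundary-twist classes feared in the previous paragraph.

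The main obstacle I anticipate is the interaction of these two ingredients: showing that the central $\langle T_c\rangle$ really maps to a torsion subgroup of $H_1(\mbox{Stab}(c))$ rather than to a free $\Z$, equivalently that some positive power of $T_c$ is a genuine product of commutators within the centraliser. The lantern relation is the device that supplies these commutators, and $g\geq 3$ is exactly what guarantees enough room to embed the lantern with $c$ on its boundary, which is where the argument breaks down for $g\leq 2$. The fiddliest situation is a separating $c$ that cuts off a piece of small genus (in particular a holed sphere, whose mapping class group can have large abelianisation), where one must still manufacture all the required relations from the larger-genus side while keeping every curve involved disjoint from $c$.
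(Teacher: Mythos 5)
First, a framing remark: the paper does not prove this proposition at all --- it is imported verbatim from Bridson's paper \cite{bri} and used as a black box --- so the only comparison available is with Bridson's own argument, whose skeleton (centraliser $=$ stabiliser of $c$, the cutting sequence $1\to\langle T_c\rangle\to \mathrm{Stab}(c)\to Q\to 1$, finiteness of $H_1(Q)$, and a lantern relation with $c$ as a boundary curve to deal with the class of $T_c$) is exactly the one you propose. The skeleton is right, but two of your steps fail as written.

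The first failure is at the heart of the proof: killing $[T_c]$ in $H_1(\mathrm{Stab}(c))$. Knowing that the six other lantern twists have torsion image in $H_1(Q)$ is not enough. The kernel of $H_1(\mathrm{Stab}(c))\to H_1(Q)$ is precisely the image of $\langle T_c\rangle$, so this information pins down the six classes in $H_1(\mathrm{Stab}(c))$ only up to multiples of $[T_c]$; substituting into the lantern relation yields an identity of the form $N[T_c]=(\mbox{integer})\cdot[T_c]$, which is vacuous when the two integers coincide, so no ``coefficient computation'' closes the loop. What is actually needed --- and what Bridson's argument uses --- is that the lantern can be chosen so that each of the six curves forms a pair with $c$ of the same topological type (all six non-separating, with the complement of each curve together with $c$ connected and of the same homeomorphism type). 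The change-of-coordinates principle then produces elements of $\mathrm{Stab}(c)$ itself conjugating the six twists to one another, so their classes in $H_1(\mathrm{Stab}(c))$ are literally equal, say to $s$, and the lantern relation gives $[T_c]+3s=3s$, i.e.\ $[T_c]=0$ exactly. Genus at least $3$ is what guarantees such a lantern on one side of $c$: the cut surface has genus $\geq 2$ when $c$ is non-separating, and the larger piece has genus $\geq 2$ when $c$ separates.

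The second failure is your treatment of boundary. The proposed ``simultaneous lantern'' cannot exist: a lantern relation lives inside $C(T_c)$ only if $c$ is one of the four \emph{boundary} curves of the four-holed sphere, because if $c$ were an interior curve then the other two interior curves would intersect $c$ and their twists would not lie in the centraliser; so $c$ cannot simultaneously be played off against three ``problematic'' boundary curves of $\Sigma_c$. In fact nothing can repair this step if boundary components are required to be fixed pointwise, because then the statement is false: take $\Sigma$ of genus $3$ with two boundary circles and let $c$ be the pants curve cutting both of them off; the centraliser of $T_c$ surjects onto the mapping class group of the pants piece with its $c$-end punctured, which is $\Z^2$ generated by the two boundary twists, so the abelianisation of the centraliser is infinite. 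The proposition, as in \cite{bri}, must be read with homeomorphisms free to move (and permute) boundary components, i.e.\ boundary treated like punctures. Under that convention, cutting along $c$ produces punctures, $Q$ contains no boundary twists at all, and the genuine care needed in your first point lies elsewhere: for a genus $0$ piece the \emph{pure} mapping class group has infinite abelianisation (pure sphere braid groups), and finiteness of $H_1$ of the actual image $Q$ uses the fact that $Q$ contains the puncture-permuting classes, so that the relevant group is a braid group modulo its full twist, whose abelianisation is finite cyclic.
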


As this is covered by Theorem \ref{det}, we have
\begin{co} If $\Sigma$ is an orientable surface of finite type having genus at
least 3 (with any number of boundary components and punctures) then
$Mod(\Sigma)$ is not NIU-linear.
\end{co}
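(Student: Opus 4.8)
The plan is to derive a contradiction by feeding the centraliser of a Dehn twist into Theorem \ref{det}, using Proposition \ref{bri} as the source of the contradiction. Suppose, for contradiction, that $Mod(\Sigma)$ is NIU-linear. Fix a Dehn twist $T$ about a simple closed curve in $\Sigma$, let $G$ denote its centraliser in $Mod(\Sigma)$, and set $A=\langle T\rangle$. The first step is to record that $G$ is itself NIU-linear: it is a subgroup of $Mod(\Sigma)$, so this follows from the closure under subgroups established in Proposition \ref{clos}(i). Moreover, by the very definition of the centraliser, $T$ commutes with every element of $G$, so $A$ is a central cyclic subgroup of $G$.

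The one geometric input I would need is that a Dehn twist about an essential simple closed curve has infinite order in the mapping class group, so that $A\cong\Z$ is a genuinely infinite central subgroup of $G$. With this in hand I would apply Theorem \ref{det}(i) to $G$ with central abelian subgroup $A$: it tells us that the kernel of the abelianisation map $\pi:G\rightarrow G/G'$ is a torsion group. Since $T$ has infinite order and the kernel is torsion, no nonzero power of $T$ can lie in $\mathrm{ker}(\pi)$, and hence $\pi(T)$ has infinite order in $G/G'$ (this is exactly the remark immediately following Theorem \ref{det}). Consequently the abelianisation of the centraliser $G$ of $T$ is infinite.

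This conclusion stands in direct opposition to Proposition \ref{bri}, which asserts that, precisely under the hypothesis that $\Sigma$ has genus at least $3$, the abelianisation of the centraliser in $Mod(\Sigma)$ of any Dehn twist is finite. That contradiction forces the original assumption to fail, so $Mod(\Sigma)$ is not NIU-linear.

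I do not expect a serious obstacle here, since almost all of the weight is carried by the two imported results. The only points requiring care are verifying that $G$ inherits NIU-linearity from $Mod(\Sigma)$ via Proposition \ref{clos}, and confirming that $T$ has infinite order so that the central subgroup $A$ is infinite; both are routine. The genuine content is that the genus hypothesis enters solely through Proposition \ref{bri}, and the mild conceptual step is recognising that the right group to place in Theorem \ref{det} is the centraliser $G$ rather than $Mod(\Sigma)$ itself, with $A=\langle T\rangle$ as the prescribed central subgroup.
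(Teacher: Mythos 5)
Your proposal is correct and takes essentially the same route as the paper: both apply Theorem \ref{det} to the centraliser $G$ of a Dehn twist $T$ with $A=\langle T\rangle$ central, and play the resulting torsion kernel off against the finiteness of the abelianisation of $G$ from Proposition \ref{bri}, using the infinite order of Dehn twists. The only difference is cosmetic (you conclude the abelianisation would be infinite, while the paper concludes $T$ would be torsion), so nothing further is needed.
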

\begin{proof} We can combine Proposition \ref{bri} and Theorem \ref{det}
with $A$ the cyclic subgroup generated by the Dehn twist $T$ and $G$ the
centraliser of $A$ in the mapping class group. Thus our Dehn twist
must lie in the kernel of $\pi$ which is a torsion group if $Mod(\Sigma)$
and thus $G$ were NIU-linear,
but Dehn twists have infinite order.
\end{proof}

This immediately gives us:
\begin{co} \label{yoh}
If $\Sigma$ is an orientable surface of finite type having genus at
least 3 (with any number of boundary components and punctures)
and $\rho:Mod(S)\rightarrow GL(d,\F)$ is any linear representation
of the mapping class group of $S$ in any dimension $d$
where either $\F$ has positive
characteristic or $\F=\C$ and the image of $\rho$ lies in the
unitary group $U(d)$ then $\rho(T)$ has finite order for $T$ any
Dehn twist.
\end{co}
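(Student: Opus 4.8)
The plan is to run essentially the same argument as in the preceding corollary, but without assuming $\rho$ is faithful: instead of deriving a contradiction, I directly bound the order of $\rho(T)$. The first step is to observe that the image $\rho(Mod(\Sigma))$ is NIU-linear in both of the stated situations. If $\F$ has positive characteristic this is automatic by Proposition \ref{diag}, since every unipotent element of $GL(d,\F)$ then has finite order; if $\F=\C$ and the image lies in $U(d)$ it holds because unitary matrices are diagonalisable over $\C$ and so admit no nontrivial unipotents (the Example following the definition of NIU-linear). In particular every subgroup of the image is again NIU-linear, by Proposition \ref{clos}(i).

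Next I would let $C$ denote the centraliser of $T$ in $Mod(\Sigma)$ and set $G=\rho(C)\leq GL(d,\F)$. Since $T$ is central in $C$, its image $\rho(T)$ is central in $G$, so $A=\langle\rho(T)\rangle$ is a central abelian subgroup of the NIU-linear group $G$. Theorem \ref{det}(i) then applies: the kernel of the abelianisation map $\pi\colon G\to G/G'$ is a torsion group, or equivalently (by the remark following that theorem) any infinite order element of $A$ remains of infinite order in $G/G'$.

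The decisive point is that $G/G'$ is finite. Indeed $\rho$ restricts to a surjection $C\to G$, which induces a surjection of abelianisations $C/C'\to G/G'$; but Proposition \ref{bri} tells us that $C/C'$ is finite, so $G/G'$ is finite as well. Consequently $\rho(T)$ cannot have infinite order: if it did, it would be an infinite order element of $A$ and hence of infinite order in the finite group $G/G'$, which is absurd. Therefore $\rho(T)$ has finite order, as claimed.

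I expect no serious obstacle here, since all the genuine work is already packaged into Theorem \ref{det} and Proposition \ref{bri}. The one point to watch is that, unlike in the faithful setting of the previous corollary, the image $G=\rho(C)$ need not coincide with the full centraliser of $\rho(T)$ inside $\rho(Mod(\Sigma))$. This turns out to be harmless: the argument only uses that $\rho(T)$ is central in $\rho(C)$ and that the abelianisation of $\rho(C)$ is finite, and both properties survive passage to a possibly non-faithful representation because $\rho|_C$ is still surjective onto $G$.
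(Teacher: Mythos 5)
Your proposal is correct and is essentially the argument the paper intends: Corollary \ref{yoh} is presented as an immediate consequence of the preceding corollary, whose proof combines Proposition \ref{bri} with Theorem \ref{det} applied to the centraliser of a Dehn twist, and your write-up simply makes explicit the adaptation to a possibly non-faithful $\rho$ by passing to the image $G=\rho(C)$, whose abelianisation is a quotient of the finite group $C/C'$. No gap here; this is the paper's route, spelled out.
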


We note that there are ``quantum'' linear representations with
infinite image but where every Dehn twist has finite order.
However 
linearity of the mapping class group in genus $g\geq 3$
is a longstanding open question, although in genus 2 linearity over
$\C$ was established in \cite{bbud} and in \cite{kork}
by applying results on braid groups. 

\subsection{3-manifold groups}
Here we will restrict our 3-manifold groups to $\pi_1(M^3)$
for $M^3$ any closed 3-manifold. We note that linearity of
$\pi_1(M^3)$ is still unknown for some graph manifolds $M^3$,
but from the work of Agol - Wise and other related results
we have that $\pi_1(M^3)$ is virtually special (and hence
linear over $\Z$) if and only
if $\pi_1(M^3)$ is non positively curved, which means that
$M^3$ admits a smooth Riemannian metric where all sectional curvatures
at every point are bounded above by zero. 
We might therefore ask how 
NIU-linearity or VUF-linearity of $\pi_1(M^3)$ relates to the
non positive curvature of $M^3$.

Whilst we cannot answer
this in full here because of the open cases of graph manifolds,
we can at least do this quickly for 3-manifolds admitting a
geometric structure. Note that for very small fundamental
groups, namely those which are virtually cyclic and so have
geometry modelled on $S^3$ or $S^2\times\R$, we have a dichotomy
because the manifolds fail to be non positively curved but the
fundamental groups are even word hyperbolic. However apart from these examples
the correspondence works.
\begin{thm} \label{3m}
Let $M^3$ be a closed 3-manifold which admits one
of Thurston's eight model geometries and where $\pi_1(M^3)$
is not virtually cyclic. Then $\pi_1(M^3)$ is
NIU-linear if and only if it is VUF-linear if and only if
$M^3$ is non positively curved.
\end{thm}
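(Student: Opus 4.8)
The plan is to go through Thurston's geometries one at a time, using the standard classification of which closed geometric $3$-manifolds carry a non positively curved Riemannian metric. Discarding $S^3$ and $S^2\times\R$ (whose fundamental groups are virtually cyclic, and so excluded by hypothesis), the remaining six geometries split into those admitting a non positively curved metric, namely $\E^3$, $\HH^3$ and $\HH^2\times\R$, and those that do not, namely Nil, Sol and $\widetilde{SL_2(\R)}$. Since VUF-linearity always implies NIU-linearity, it suffices to prove that $\pi_1(M^3)$ is VUF-linear for each of the first three geometries and fails to be NIU-linear for each of the last three: this yields the cycle of implications non positively curved $\Rightarrow$ VUF-linear $\Rightarrow$ NIU-linear $\Rightarrow$ non positively curved, and hence the stated triple equivalence.

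For the non positively curved geometries I would argue as follows. In the $\E^3$ case $\pi_1(M^3)$ is a Bieberbach group, hence contains a copy of $\Z^3$ with finite index; as $\Z^3$ is VUF-linear via a faithful diagonal representation with multiplicatively independent entries (all of whose elements are semisimple), Proposition \ref{clos}(ii) makes $\pi_1(M^3)$ VUF-linear. In the $\HH^3$ case $\pi_1(M^3)$ is a cocompact lattice in the isometry group of $\HH^3$, so it contains no parabolic elements and therefore no non trivial unipotents; being in characteristic zero it is thus VUF-linear. In the $\HH^2\times\R$ case the Euler number of the Seifert fibration vanishes, so $\pi_1(M^3)$ is virtually a direct product $\Sigma\times\Z$ with $\Sigma$ a cocompact Fuchsian group; such a $\Sigma$ again has no parabolics and hence no non trivial unipotents, $\Z$ is VUF-linear, and Proposition \ref{clos}(iii),(ii) finish the case.

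For the three remaining geometries the obstructions are exactly those furnished by the earlier sections. The Nil and $\widetilde{SL_2(\R)}$ manifolds are Seifert fibered with non zero Euler number, so the central fibre class $h$ has infinite order in $\pi_1(M^3)$ while a non trivial power of $h$ lies in the commutator subgroup (the fibre is rationally null homologous precisely when the Euler number is non zero); thus $h$ has finite order in the abelianisation. Applying Theorem \ref{det} to the central subgroup $A=\langle h\rangle$ shows this is impossible once $\pi_1(M^3)$ is NIU-linear, since there every infinite order central element remains of infinite order in the abelianisation. (For Nil one may equally pass to the integral Heisenberg subgroup and quote Example 1.4 directly.) For Sol the group $\pi_1(M^3)$ is virtually $\Z^2\rtimes_A\Z$ with $A$ an Anosov matrix, and here the subgroup $\Z^2$ is exponentially distorted exactly as in Example 1.5; by Theorem \ref{undis} this distortion cannot occur in a finitely generated NIU-linear group, so $\pi_1(M^3)$ is not NIU-linear.

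I expect the main obstacle to be the $\widetilde{SL_2(\R)}$ case: one must correctly identify the central fibre and verify that non vanishing of the Euler number forces a power of it into the commutator subgroup, so that Theorem \ref{det} applies. The remaining bookkeeping (that NIU-linearity and distortion behave well under passage to and from the finite index model groups, and that the geometries admitting a non positively curved metric are exactly $\E^3$, $\HH^3$ and $\HH^2\times\R$) is standard, but should be stated carefully so that the three chains of implication genuinely close up.
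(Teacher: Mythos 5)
Your proposal is correct, and its overall scheme is the same as the paper's: a case-by-case run through the six remaining geometries, using closure under commensurability, subgroups and direct products (Proposition \ref{clos}) for the non positively curved geometries $\E^3$, $\HH^3$, $\HH^2\times\R$, and the abelianisation-of-centralisers obstruction (Theorem \ref{det}) applied to the central fibre class of a Seifert fibration with non zero Euler number. The one genuine divergence is in how Nil and Sol are dispatched. The paper treats them together and more cheaply: a closed Nil- or Sol-manifold has virtually polycyclic but not virtually abelian fundamental group, so Proposition \ref{coi} (polycyclic NIU-linear groups are virtually abelian) rules out NIU-linearity in one stroke, with Theorem \ref{det} reserved only for the non zero Euler class Seifert case arising in the $\widetilde{PSL(2,\R)}$ geometry. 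You instead fold Nil into the Euler-class/Theorem \ref{det} argument (which works, since a power of the central fibre class of a Heisenberg lattice is a commutator, as in Example 1.4) and handle Sol by exhibiting the exponentially distorted $\Z^2$ and invoking Theorem \ref{undis}. Both of your routes are valid, but they call on heavier results from the paper than necessary --- in particular the Sol case needs only the Section 4 material (Proposition \ref{coi}), not the full undistortion theorem of Section 5 --- and your worry about the $\widetilde{SL_2(\R)}$ case is exactly the point the paper settles by citing (C.10) and (C.11) of \cite{afw3m}: non vanishing Euler class forces the infinite order central element to die in the abelianisation, which is all Theorem \ref{det} requires.
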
  
\begin{proof}
We will work in characteristic zero, so that the difference
between NIU and VUF-linearity disappears (the arguments do
all work in positive characteristic except for the hyperbolic
case, where it is not clear if we have faithful linear representations).
As $\pi_1(M^3)$ is not virtually cyclic, we have six geometries to check.

First if $M^3$ is hyperbolic then it has a faithful (and indeed
discrete) representation
in $PSL(2,\C)$ and also in $SL(2,\C)$ where every element is
loxodromic, thus without unipotents. Also if $M^3$ is Euclidean
then its fundamental group is virtually $\Z^3$ and all such groups
are NIU-linear (by commensurability and taking direct products).

If $M^3$ has Nil or Sol geometry then it cannot admit a metric of
non positive curvature and its fundamental group is virtually
polycyclic, but not virtually abelian so 
$\pi_1(M^3)$ is not NIU-linear by Section 4.

We are now left with the $\widetilde{PSL(2,\R)}$ and $\HH^2\times\R$
geometries, leading us to Seifert fibred spaces which here can be defined
as closed 3-manifolds $M^3$ which are finitely covered by circle bundles
over surfaces $B^3$ and where $\pi_1(M^3)$ is not virtually cyclic.
By say (C.10) and (C.11) of \cite{afw3m}, we have two
possibilities for $B^3$. The first is that it
has a finite cover which is of the form $S^1\times F$ for $F$ a closed
orientable surface, whereupon $M^3$ is non positively curved.
Thus again by commensurability and direct products,
the fundamental group of this finite cover, and hence of $M^3$ itself, is
NIU-linear.

The other case, where $\pi_1(M^3)$ will fail to be non positively curved,
is when $B^3$ has non zero Euler class, in which case the infinite order
central element of $\pi_1(B^3)$ has 
finite order in the abelianisation of $\pi_1(B^3)$ so Theorem
\ref{det} immediately applies to tell us that
$\pi_1(B^3)$ and $\pi_1(M^3)$ cannot be NIU-linear.
\end{proof}

\subsection{Arbitrary groups of Euclidean isometries}
A standard result is that the group ${\cal E}_d$ of Euclidean isometries
of $\R^d$ in dimension $d$ is a semidirect product of the form
$T_d\rtimes O(d)$, where the translation subgroup $T_d$ is a copy of
$(\R^d,+)$. As $O(d)$ is VUF-linear and hence shares
the nice properties of non positively curved groups as shown
in the previous sections, we can ask whether ${\cal E}_d$ and its
subgroups do too. This seems likely given that ${\cal E}_d$ is ``nearly''
VUF-linear in that we just have to deal with the translation subgroup.
In fact we will see that the abelianisation of centralisers property
still holds and the property of small subgroups almost goes through,
whereas things are rather different for distortion of abelian
subgroups. We emphasise here that we are interested in arbitrary groups
of Euclidean isometries in that we will not assume they are discrete, proper 
or cocompact.

First ${\cal E}_d$ is itself well known to be linear
as a subgroup of $GL(d+1,\R)$ via sending the map $Mx+b$ to the matrix
\[\sma{cc}\boxed{M}&b\\
0&1\fma\mbox{ for }M\in O(d)\mbox{ and }b\in\R^d.
\]
Of course under this embedding the translations map to unipotent elements.
In fact ${\cal E}_d$ is not NIU-linear as an abstract group either, even
for $d=2$ because the subgroup generated by a translation and an infinite
order rotation is (again) $\Z\,\wr\,\Z$. Now this group is not NIU-linear
(for instance by Corollary \ref{cos} as it is torsion free).

We first show the equivalent of Corollary \ref{coiv} for Euclidean isometries.
\begin{co} \label{cene}
If $G$ is any finitely generated group of Euclidean isometries in dimension
$d$ and 
$A\cong\Z^m$ is central in $G$ then there exists a subgroup
of finite index in $G$ that contains $A$ as a direct factor.
\end{co}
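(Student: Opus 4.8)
The plan is to reduce to Corollary \ref{coiv} by building a single homomorphism from $G$ to a finitely generated abelian group that is injective on $A$; once such a homomorphism is in hand, the final splitting argument is a verbatim repetition of the one in Corollary \ref{coiv}. Writing ${\cal E}_d = T_d \rtimes O(d)$, let $p:G\to O(d)$ be the restriction of the projection that forgets translations, so that $p(G)$ is a finitely generated subgroup of $O(d)$ and hence NIU-linear. Since $A$ is central in $G$, its image $p(A)$ is central in $p(G)$, so Theorem \ref{det} applies and the kernel of the abelianisation map $\pi:p(G)\to p(G)/p(G)'$ is a torsion group. I would set $\theta_1=\pi\circ p:G\to p(G)/p(G)'$, a homomorphism to a finitely generated abelian group. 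This $\theta_1$ detects the rotational part of $A$ up to finite order, but it kills the translation subgroup $L=A\cap T_d$ entirely (as $L\subseteq\ker p$), so a second homomorphism is needed to see $L$.

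For the translation part I would use the common fixed subspace $W=\{b\in\R^d:Mb=b\text{ for all }M\in p(G)\}$ of $\R^d$. Writing a general element of $G$ as a pair $(M,c)$ acting by $x\mapsto Mx+c$, define $q:G\to(W,+)$ by $q((M,c))=P_W(c)$, where $P_W$ is orthogonal projection onto $W$. The key computation is that $q$ is a homomorphism: since $(M_1,c_1)(M_2,c_2)=(M_1M_2,\,M_1c_2+c_1)$ and, for every $w\in W$, $\langle M_1c_2,w\rangle=\langle c_2,M_1^{-1}w\rangle=\langle c_2,w\rangle$ (using that $M_1^{-1}\in p(G)$ fixes $w$), one gets $P_W(M_1c_2)=P_W(c_2)$, whence $q$ respects multiplication into the abelian group $W$. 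Moreover any element of $L$ is a central translation $(I,b)$, and centrality forces $Mb=b$ for every $M\in p(G)$, i.e. $b\in W$, so $q((I,b))=b$ and $q$ restricts to the identity on $L$.

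I would then combine these into $\Theta=(\theta_1,q):G\to\big(p(G)/p(G)'\big)\times W$ and check injectivity on $A$, using crucially that $A\cong\Z^m$ is torsion free. If $\Theta(a)=0$ for $a=(M,c)\in A$, then $p(a)=M$ lies in the torsion kernel of $\pi$, so $M^r=I$ for some $r>0$; hence $a^r=(I,b')$ is a translation lying in $A\cap T_d=L\subseteq W$, and then $q(a^r)=r\,q(a)=0$ gives $P_W(b')=b'=0$, so $a^r=e$ and therefore $a=e$. Finally, because $G$ is finitely generated, $\Theta(G)$ is a finitely generated abelian group; projecting away its finite torsion and then onto a rank-$m$ quotient exactly as in Corollary \ref{coiv} yields a homomorphism onto $\Z^m$ still injective on $A$ with finite-index image, whose kernel $K$ produces $KA\cong K\times A$ of finite index in $G$.

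The main obstacle is verifying that $q$ is genuinely a homomorphism and, hand in hand with that, that the combined map $\Theta$ separates the translational part $L$ from the rotational part of $A$. The observation that central translations must lie in the fixed subspace $W$ is exactly what makes both the homomorphism property of $q$ and the injectivity of $\Theta$ work; everything after injectivity is routine and identical to Corollary \ref{coiv}.
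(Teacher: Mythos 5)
Your proof is correct and takes essentially the same approach as the paper: both build a homomorphism from $G$ to an abelian group that is injective on $A$ by combining Theorem \ref{det} applied to the image of $G$ in $O(d)$ with orthogonal projection of the translational part onto a subspace fixed pointwise by every rotation part of $G$ (your $W$ versus the paper's span $U$ of the central translation vectors), and then conclude exactly as in Corollary \ref{coiv}. The only cosmetic difference is that you obtain injectivity on $A$ via a power/torsion argument, whereas the paper first splits a finite index subgroup of $A$ as (translations) $\oplus$ (complement) and checks injectivity on each factor.
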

\begin{proof} From before it is enough to find a homomorphism from
$G$ to an abelian group which is injective on $A$. 
First consider the translation subgroup $S=T_d\cap A\cong\Z^r$ (for
$0\leq r\leq m$) of $A\cong\Z^m$ which must be a direct factor
of $B=S\oplus R$, where $B$ has finite index in $A$ and 
$R$ is some complementary subgroup. We will henceforth work with $B$ and
replace it by $A$ at the end.

Now the homomorphism $\psi:G\rightarrow O(d)$
which sends an isometry $Mx+b$ to $M$ is
injective on $R$ with $\psi(R)$ VUF-linear and torsion free, so we
can further compose with $\theta:\psi(G)\rightarrow(\C^*)^k$ as in Theorem
\ref{det}. Then $\theta\psi$ is injective on $R$ and trivial on the
translations $S$ in $A$, so we now need a homomorphism 
$\chi$ from $G$ to an abelian group which is injective on $S$. To obtain
this, first note that $\gamma\in{\cal E}_d$ commutes with the translation
$x\mapsto x+t$ if and only if $M_{\gamma}$ fixes the vector $t\in\R^d$, where
we write $\gamma(x)=M_{\gamma}x+b_{\gamma}$. Thus if the 
subgroup $S$ is freely generated
by the translations with vectors $t_1,\ldots ,t_r$ then for all
$g\in G$ we see that $M_g$ fixes the subspace 
$U=span(t_1,\ldots ,t_r)\leq \R^d$ pointwise.

Now on writing $\R^d=U\oplus U^{\perp}$ and letting
$\pi:\R^d\rightarrow U$ be orthogonal projection onto $U$, it is
easily checked that sending the element $g\in G$ to the map
$\pi\circ g|_U$ from $U$ to itself results in the translation
$u\mapsto u+\pi(b_g)$ and this is a group homomorphism $h$ from $G$
to the (abelian) translation group of $U$. This certainly sends
the translations $s \in S$ to (restrictions of) themselves, so
is injective on $S$.

Finally, putting $h$ and $\theta\psi$ together results in a homomorphism
from $G$ to an abelian group which is injective on $B$ and therefore
on $A$, as $B$ has finite index in $A$ which is torsion free.
\end{proof}

Any result on small subgroups of Euclidean groups has to take account
of the fact that wreath products can appear. The following is a
corollary of the results in Section 4.
\begin{co} \label{smae}
Suppose that $S$ is an arbitrary group of Euclidean isometries in any
dimension which does not contain a non abelian free group. Then $S$
is virtually metabelian.

Suppose further that $G$ is an arbitrary finitely generated group
of Euclidean isometries and and $S$ is any subgroup of $G$ which
does not contain a non abelian free group. If the translation
subgroup of $S$ is finitely generated
then $S$ is finitely generated, though the converse need
not hold. 
\end{co}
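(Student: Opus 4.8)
The plan is to prove the two assertions of Corollary~\ref{smae} separately, in both cases reducing the structure of a Euclidean group without $F_2$ to the already-established facts about the orthogonal part and the translation part. Recall ${\cal E}_d=T_d\rtimes O(d)$ with $T_d\cong(\R^d,+)$, and write $\psi:{\cal E}_d\rightarrow O(d)$ for the projection sending $Mx+b$ to $M$, whose kernel is exactly $T_d$.

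For the first statement, let $S$ be any group of Euclidean isometries not containing $F_2$. The key observation is that $\psi(S)\leq O(d)$ is VUF-linear, and it also contains no non-abelian free subgroup (a free subgroup of $\psi(S)$ would lift, since preimages of generators generate a subgroup surjecting onto it, and one can use the Tits alternative to rule this out). Thus $\psi(S)$ is a linear group in characteristic zero containing no $F_2$, so by the Tits alternative it is virtually solvable, and then by Corollary~\ref{cos} (applied to its solvable finite-index subgroup, which is NIU-linear as a subgroup of $O(d)$) it is virtually abelian. The kernel $S\cap T_d$ is a subgroup of $(\R^d,+)$, hence abelian. So $S$ is an extension of the abelian group $S\cap T_d$ by the virtually abelian group $\psi(S)$; passing to the finite-index subgroup $S_0$ with $\psi(S_0)$ abelian gives $S_0$ metabelian, whence $S$ is virtually metabelian. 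The wreath-product phenomenon flagged before the corollary is exactly why one cannot hope for ``virtually abelian'' here: $\Z\wr\Z$ embeds in ${\cal E}_2$, is metabelian, contains no $F_2$, but is not virtually abelian.

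For the second statement, suppose $G$ is finitely generated and $S\leq G$ contains no $F_2$, with translation subgroup $S\cap T_d$ finitely generated. The approach is to sandwich $S$ between two finitely generated pieces. Consider the short exact sequence $1\rightarrow (S\cap T_d)\rightarrow S\xrightarrow{\psi} \psi(S)\rightarrow 1$. By hypothesis the kernel $S\cap T_d$ is finitely generated. For the quotient, $\psi(S)\leq\psi(G)\leq O(d)$; since $G$ is finitely generated, so is $\psi(G)$, and $\psi(S)$ is a subgroup of a finitely generated VUF-linear group which contains no $F_2$. Hence Corollary~\ref{sbgp} applies to $\psi(G)$ and tells us that $\psi(S)$ is virtually abelian \emph{and finitely generated}. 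An extension of a finitely generated group by a finitely generated group is finitely generated, so $S$ is finitely generated.

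The main obstacle I expect is controlling $\psi(S)$ rather than $S$ itself: I need $\psi(S)$ to inherit both the ``no $F_2$'' property and a finite-generation conclusion, and the cleanest route is to invoke Corollary~\ref{sbgp} for $\psi(G)$ directly, which requires checking that $\psi(G)$ is a finitely generated VUF-linear group and that $\psi(S)$ omits $F_2$. The omission of $F_2$ in the image is the delicate point, since homomorphic images can in principle acquire free subgroups; but here one uses that any $F_2$ in $\psi(S)$ would pull back to an $F_2$ in $S$ by the standard argument that a subgroup of $S$ mapping onto a free group splits the surjection, contradicting the hypothesis on $S$. Finally, to justify the parenthetical ``the converse need not hold,'' I would exhibit a finitely generated Euclidean group whose translation subgroup is not finitely generated---again $\Z\wr\Z\hookrightarrow{\cal E}_2$ serves, as it is two-generated but its translation subgroup is the infinitely generated $\bigoplus_{i}\Z$.
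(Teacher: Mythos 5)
Your proposal is correct and takes essentially the same route as the paper: project via $\psi$ onto the VUF-linear group $\psi(S)\leq O(d)$, use the Section 4 results (Tits alternative together with Theorem \ref{alt}, and Corollary \ref{sbgp} applied to $\psi(G)$ for finite generation) to get a virtually abelian, finitely generated image, note that the kernel consists of translations and is hence abelian, and invoke $\Z\,\wr\,\Z\hookrightarrow{\cal E}_2$ to show the converse fails. The only cosmetic difference is that the paper applies the Tits alternative to $S$ itself (which is linear in characteristic zero inside $GL(d+1,\R)$) so as to assume $S$ solvable from the outset, whereas you apply it to $\psi(S)$ after excluding $F_2$ from the image via the splitting-of-surjections-onto-free-groups argument; both are valid.
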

\begin{proof}
By the Tits alternative in zero characteristic and Theorem \ref{alt},
we have that $S$ is solvable without loss of generality and $\psi(S)$
is VUF-linear so is virtually abelian, meaning that $S$ has a finite
index subgroup $H$ with $\psi(H)$ abelian but the kernel of $\psi$
contains only translations.

Then by Corollary \ref{sbgp} applied to $\psi(G)$ and $\psi(S)$, we
see that $\psi(S)$ is finitely generated, so $S$ is an extension
of two finitely generated groups if its translation subgroup is also
finitely generated. However for $\Z\,\wr\,\Z$ in dimension
2 in our example above, the translation subgroup is not finitely generated. 
\end{proof}

Finally we show as an immediate corollary of Theorem \ref{undis}
that infinite cyclic subgroups of any finitely
generated group of Euclidean isometries are undistorted. This may not seem
surprising, until we combine it with \cite{conir} Lemma 3.5
which states that if $\phi\in GL(n,\Z)$ for $n\geq 2$ has
irreducible characteristic polynomial over $\Z$ and some
eigenvalue on the unit circle then the group
$G=\Z^n\rtimes_\phi\Z$ embeds
in ${\cal E}_3$ (with the $\Z^n$ as translations and $\Z$ a skew
translation). But if $\phi$ has some other eigenvalue not on
the unit circle then $\Z^n$ is distorted in $G$, just as in Example
1.5. Thus in Euclidean groups the geometric behaviour of $\Z$ subgroups
and higher rank free abelian subgroups can be quite different.  
\begin{co} \label{undse} 
If $G$ is any finitely generated group of Euclidean isometries and
$\gamma\in G$ is any infinite order element then 
$\langle\gamma\rangle$ is undistorted in $G$.
\end{co}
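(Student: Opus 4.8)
The plan is to split the argument according to the order of the rotation part $\psi(\gamma)$, where $\psi:G\to O(d)$ is the homomorphism sending an isometry $Mx+b$ to $M$, as used in Corollary \ref{cene}. Writing $\gamma(x)=M_\gamma x+b_\gamma$, the two regimes are: $M_\gamma=\psi(\gamma)$ has infinite order, or it has finite order. In each I aim to invoke the general criterion set out at the start of Section 5, namely that a subadditive $f:G\to[0,\infty)$ with a linear lower bound $f(\gamma^n)\geq c|n|$ for some $c>0$ forces $\langle\gamma\rangle$ to be undistorted; and as always it is enough to treat a finite index subgroup of $\langle\gamma\rangle$.

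First suppose $\psi(\gamma)$ has infinite order. Since $G$ is finitely generated so is $\psi(G)$, and $\psi(G)\leq O(d)$ is NIU-linear by the orthogonal example in Section 2. Thus $\langle\psi(\gamma)\rangle\cong\Z$ is an infinite cyclic, hence finitely generated abelian, subgroup of the finitely generated NIU-linear group $\psi(G)$, so Theorem \ref{undis} applies and gives $l_{\psi(G)}(\psi(\gamma)^n)\geq c|n|$ for some $c>0$. Because $\psi$ carries a finite generating set of $G$ onto a generating set of $\psi(G)$, it cannot increase word length, so $l_G(\gamma^n)\geq l_{\psi(G)}(\psi(\gamma)^n)\geq c|n|$ and $\langle\gamma\rangle$ is undistorted.

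Now suppose $\psi(\gamma)=M_\gamma$ has finite order $k$. Then $\gamma^k$ has rotation part $M_\gamma^k=I$, so $\gamma^k$ is a pure translation by a vector $b_{\gamma^k}$ which is non zero since $\gamma$, and hence $\gamma^k$, has infinite order. As $\langle\gamma^k\rangle$ has finite index in $\langle\gamma\rangle$ it suffices to handle $\langle\gamma^k\rangle$, and for this I would use $f(g)=|b_g|$, the Euclidean norm of the translation part. Composing $g$ with $h$ gives $b_{gh}=b_g+M_gb_h$, and since $M_g\in O(d)$ preserves the norm we get $|b_{gh}|\leq|b_g|+|b_h|$, so $f$ is subadditive. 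Because $\gamma^k$ is a pure translation, $b_{(\gamma^k)^n}=n\,b_{\gamma^k}$, whence $f((\gamma^k)^n)=|n|\,|b_{\gamma^k}|$ with $|b_{\gamma^k}|>0$; this is exactly the required linear lower bound, and the criterion finishes the case.

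The two cases differ sharply in difficulty, and the main obstacle is entirely in the infinite order case: there the conclusion rests on the full strength of Theorem \ref{undis}, whose substantive content (discrete absolute values and Dirichlet's units theorem) is needed precisely because every $\psi(\gamma)\in O(d)$ has all eigenvalues on the unit circle, rendering the operator norm function of Section 5 useless here. By contrast the finite order case is elementary once one observes that $f(g)=|b_g|$ is subadditive, this being the one computation worth recording. The only other point needing care, namely the passage from undistortion in $\psi(G)$ back to $G$, is routine given that a surjection taking generators to generators is distance non-increasing on word metrics.
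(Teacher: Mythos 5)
Your proof is correct and follows essentially the same route as the paper: the same dichotomy on the order of the rotation part $\psi(\gamma)$, with the infinite order case handled by applying Theorem \ref{undis} to the NIU-linear group $\psi(G)\leq O(d)$ and noting that $\psi$ does not increase word length, and the finite order case reduced to a pure translation. Your function $f(g)=|b_g|$ is exactly the paper's displacement function $\delta_{x_0}$ taken at the basepoint $x_0=0$, so even that step coincides up to presentation.
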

\begin{proof} If the rotation part $M_{\gamma}$ of $\gamma$ has infinite order
then $\langle \gamma\rangle$ distorted in $G$ means that 
$\langle \psi(\gamma)\rangle$
is distorted in the NIU-linear group $\psi(G)\leq O(d)$ which contradicts
Theorem \ref{undis}. Thus we can assume that some power of $\gamma$, so
$\gamma$ itself without loss of generality, is a translation. 
However translations are undistorted because they are translations: a
fact that has been recognised in many contexts (for instance \cite{lmr}
Lemma 2.8). In slightly more detail: as $G$ acts on a metric space $X$
by isometries, on taking an arbitrary basepoint $x_0\in X$ we obtain 
the displacement function $\delta_{x_0}(g)$ which measures
the distance in $X$ from $x_0$ to $g(x_0)$ and this is submultiplicative
on $G$ because it acts by isometries. 
Consequently finite generation says
we have $K>0$ such that 
$\delta_{x_0}(g)\leq Kl_S(g)$ for all $g\in G$ (where $l_S$ is the word
length on $G$ under a finite generating set $S$). Now if $\gamma$ is any
element of $G$ having a point $x_0\in X$ and a constant $c>0$ such that 
$cn\leq\delta_{x_0}(\gamma^n)$ for all $n\in\N$ then $\langle\gamma\rangle$
is undistorted in $G$ because $c/K>0$.
\end{proof}

\subsection{$Out(F_n$) and related results}
We finish by discussing the case of the 
outer automorphism group $Out(F_n)$ of the free group, as well as
some related groups. Now, at least for $n\geq 4$, 
$Out(F_n)$ is not linear over any field by \cite{fp} so 
we cannot apply any of our results directly to $Out(F_n)$.
However the Tits alternative was shown to hold
by \cite{bfh1} and \cite{bfh2}. Then
\cite{alib} (which
ostensibly shows that all cyclic subgroups of $Out(F_n)$ are undistorted)
is able to obtain the strong small subgroups property for $Out(F_n)$ using
a result of G.\,Conner in \cite{consol}. 

This was described as follows: if $G$ is finitely generated then
put the word length $l_S$ on $G$ (with respect to
some finite generating set $S$) and let $\tau$ be the associated translation
length, that is $\tau(g)=\mbox{lim}_{n\rightarrow\infty}l_S(g^n)/n$.
Thus having $\tau(g)>0$ for all infinite order elements $g$ is equivalent
to saying that every cyclic subgroup of $G$ is undistorted. This is
not equivalent to saying that there exists $c>0$ with $\tau(g)\geq c$ for
all infinite order
$g\in G$, as seen by examples of the form $\Z^n\rtimes\Z$ where
all cyclic subgroups are undistorted but $\Z^n$ is distorted.

However now suppose the finitely generated group $G$ has finite
virtual cohomological dimension. Then  \cite{consol} Theorem 3.4 states
that if $G$ does possess such a constant $c>0$ as above then every
solvable subgroup of $G$ is virtually abelian and finitely generated.
Thus if also the classical Tits alternative
is known to hold for $G$, namely every subgroup either contains $F_2$ or
is virtually solvable, we obtain the strong small subgroups property for
$G$. As Alibegovic actually shows in \cite{alib} that there is indeed such
a $c_n>0$ for $Out(F_n)$ with a suitable finite generating set, we are
done. (We remark though that this approach still requires 
establishing the classical Tits alternative
for $Out(F_n)$ as in \cite{bfh1} and \cite{bfh2}, which is highly non trivial, 
as well as relying on the
finite virtual cohomological dimension
of $Out(F_n)$ from \cite{cv}.)

But what about abelian subgroups of $Out(F_n)$? The recent preprint
\cite{wrig} shows, by  employing
technical knowledge of train track maps,
that they are all undistorted. However here we can use the above result
of Alibegovic to provide a quick proof of this fact.
\begin{thm} \label{trids}
Let $G$ be a finitely generated group where the translation length
function $\tau:G\rightarrow [0,\infty)$
with respect to word length of some (equivalently any)
finite generating set 
has $c>0$ such that $\tau(g)\geq c$ for all infinite order elements
$g\in G$. Then any finitely generated abelian subgroup $A$ of $G$ is
undistorted in $G$.
\end{thm}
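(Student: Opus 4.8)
The plan is to show that the word length $l_S$ dominates the translation length $\tau$, that $\tau$ restricted to the abelian subgroup $A$ is a seminorm, and that the hypothesis forces this seminorm to be a genuine norm; once this is in hand, equivalence of norms on a finite-dimensional real vector space yields undistortion at once. First I would reduce to the case $A=\langle a_1,\ldots,a_m\rangle\cong\Z^m$, since a finitely generated abelian group has a free abelian subgroup of finite index and, as noted earlier, undistortion of a finite index subgroup of $A$ gives it for $A$ itself. I would fix a symmetric generating set $S$ (so $l_S(w^{-1})=l_S(w)$) and record the two elementary facts about $\tau$: homogeneity $\tau(g^k)=|k|\,\tau(g)$, where the defining limit exists by subadditivity of $n\mapsto l_S(g^n)$, and the bound $\tau(w)\le l_S(w)$.

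Next I would establish that $\tau|_A$ is a seminorm on the lattice $A\cong\Z^m$. The only nontrivial point is subadditivity, and here commutativity is used: for commuting $g,h$ we have $l_S((gh)^n)=l_S(g^nh^n)\le l_S(g^n)+l_S(h^n)$, and dividing by $n$ and letting $n\to\infty$ gives $\tau(gh)\le\tau(g)+\tau(h)$. Combined with the symmetry $\tau(g^{-1})=\tau(g)$ and homogeneity, this makes $\tau|_A$ a seminorm on $\Z^m$, which extends uniquely to a seminorm $\|\cdot\|$ on $A\otimes\R\cong\R^m$: it is Lipschitz for the $\ell^1$-norm because $\|v\|\le\sum_i|n_i|\,\tau(a_i)$ for $v=(n_1,\ldots,n_m)$, so it extends by continuity from $\Q^m$.

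The crucial step is to promote this seminorm to an honest norm using the uniform hypothesis. Every nonzero element of $A\cong\Z^m$ has infinite order, so $\|v\|=\tau(a_1^{n_1}\cdots a_m^{n_m})\ge c$ for every lattice point $v\neq0$. Let $W=\{v:\|v\|=0\}$ be the kernel subspace; then $W\cap\Z^m=\{0\}$, and $\|\cdot\|$ descends to a genuine norm on $\R^m/W$. Under the projection $p:\R^m\to\R^m/W$ the map $p$ is injective on $\Z^m$ and every nonzero element of $p(\Z^m)$ has norm $\ge c$, so $p(\Z^m)$ is a discrete subgroup of $\R^m/W$ and hence free abelian of rank at most $m-\dim W$; injectivity forces its rank to equal $m$, whence $\dim W=0$ and $\|\cdot\|$ is a norm on $\R^m$.

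Finally, since all norms on $\R^m$ are equivalent there is $c'>0$ with $\|v\|\ge c'(|n_1|+\ldots+|n_m|)$, and combining with $l_S\ge\tau$ gives $l_S(a_1^{n_1}\cdots a_m^{n_m})\ge\|v\|\ge c'(|n_1|+\ldots+|n_m|)$, which is exactly undistortion of $A$ in $G$. I expect the main obstacle to be the third paragraph, verifying that the seminorm has trivial kernel, since that is where the uniform lower bound $c>0$ is genuinely needed rather than mere positivity of $\tau$ on each infinite-order element: positivity on individual elements alone would not suffice, because the kernel could a priori be an irrational subspace meeting the lattice only at $0$ yet approached arbitrarily closely by lattice points, and it is precisely the uniform constant $c$, exploited through the discreteness-and-rank argument, that rules this out.
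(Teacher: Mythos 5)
Your proposal is correct, and its overall skeleton is the same as the paper's: reduce to $A\cong\Z^m$, observe that $\tau\le l_S$, that $\tau$ is homogeneous, and that $\tau$ is subadditive on commuting elements, so that $\tau|_A$ is a $\Z$-seminorm which the hypothesis $\tau\ge c$ makes into a \emph{discrete} $\Z$-norm; extend it to an $\R$-seminorm on $\R^m$; show it is a genuine norm; and finish by equivalence of norms on $\R^m$. The one place you genuinely diverge is the promotion of the seminorm to a norm: the paper does not prove this step but cites Steprans \cite{step} for the fact that a discrete $\Z$-norm on $\Z^m$ extends to an $\R$-norm on $\R^m$, whereas you give a short self-contained argument — take the kernel subspace $W$ of the seminorm, note that the quotient seminorm on $\R^m/W$ is a norm, that the image of the lattice is a subgroup all of whose nonzero elements have norm at least $c$ and is therefore discrete, hence free abelian of rank at most $\dim(\R^m/W)=m-\dim W$, while injectivity of the projection on $\Z^m$ (from $W\cap\Z^m=\{0\}$) forces rank $m$, so $\dim W=0$. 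That argument is complete and correct, and it buys you independence from the reference; it also makes visible exactly where the uniform constant $c$ (as opposed to mere positivity of $\tau$ on infinite-order elements) is used, which is the same point the paper flags by insisting on discreteness of the $\Z$-norm: without it the kernel $W$ could be an irrational subspace meeting $\Z^m$ only at $0$.
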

\begin{proof}
As usual, without loss of generality we take $A$ to be isomorphic
to $\Z^m$ and pick out some free abelian basis $a_1,\ldots ,a_m$.
Here by an $\R$-norm $||\cdot ||$on $\R^m$ we mean the standard definition from
normed vector spaces, that is it satisfies the triangle inequality with
$||v||$ being zero if and only if $v$ is the zero vector, and also
$||\lambda v||=|\lambda|\cdot ||v||$ for $|\cdot |$ the usual modulus
on $\R$. We will also define a $\Z$-norm on $\Z^m$ to be a function
$f:\Z^m\rightarrow [0,\infty)$ having the same properties, except
the last becomes $f(na)=|n|\cdot||a||$ for all $n\in\Z$ and $a\in\Z^m$.
We also have $\R$- and $\Z$-seminorms where we remove the $||\cdot ||=0$
implies $\cdot=0$ condition.

Now consider word length $l_S$ on $G$ with respect to some finite
generating set $S$ and the associated translation length $\tau$
(where changing the generating set replaces both
$l_S$ and $\tau$ by Lipschitz equivalent functions). We have
$0\leq\tau(g)\leq l_S(g)$ for any $g\in G$ by repeated use of
the triangle inequality and $\tau$ also satisfies 
$\tau(g^n)=|n|\tau(g)$. We further have $\tau(gh)\leq\tau(g)+\tau(h)$
for commuting elements $g,h$ (but not in general). Thus on
restricting $\tau$ to the abelian subgroup $A$, we see that $\tau$ is
 a $\Z$-seminorm on $A$. However here it is also a $\Z$-norm as
$\tau(a)>0$ for all $a\in A\setminus\{id\}$. Moreover $\tau$ is actually
a discrete $\Z$-norm in that we have $\tau(a)\geq c>0$ for 
all $a\in A\setminus\{id\}$.

On regarding $A\cong\Z^m$ as embedded in $\R^m$ as the integer lattice
points, we can extend $\tau$ to $\Q^m$ by dividing through and to
$\R^m$ by taking limits, so that $\tau$ is also an $\R$-seminorm
on $\R^m$. However it is not too hard to see that $\tau$ is in fact
a genuine $\R$-norm, as explained carefully in \cite{step}, and we
denote this by $||\cdot ||_\tau$.

Now take $a=a_1^{n_1}\ldots a_m^{n_m}\in A$ which is also the lattice
point $(n_1,\ldots ,n_m)\in\R^m$. We have the $\ell_1$ norm 
$||\cdot ||_1$ on $\R^m$
but all norms on $\R^m$ are equivalent, so there is $k>0$ such that
\[l_S(a)\geq\tau(a)=||(n_1,\ldots ,n_m)||_\tau
\geq k||(n_1,\ldots ,n_m)||_1=k(|n_1|+\ldots +|n_m|)\]
so $A$ is undistorted in $G$.
\end{proof}

We end by pointing out that $Out(F_n)$ having
undistorted abelian subgroups can be used to
establish some further consequences.       
\begin{co} If $A$ is any abelian subgroup of the automorphism
group $Aut(F_n)$, or of any free by cyclic group $G=F_n\rtimes_\alpha\Z$
for $\alpha\in Aut(F_n)$ then $A$ is finitely generated and
undistorted in $Aut(F_n)$ or in $G$.
\end{co}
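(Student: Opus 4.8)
The plan is to reduce both assertions to the two facts already in hand for $Out(F_n)$ --- that its abelian subgroups are finitely generated and undistorted --- via the translation-length criterion of Theorem \ref{trids}. I would treat finite generation and undistortion separately, and I expect the only real work to lie in controlling distortion along the ``fibre'' directions: along inner automorphisms in $Aut(F_n)$, and along $F_n$ in $F_n\rtimes_\alpha\Z$.

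Finite generation is the easy half. For $Aut(F_n)$ I would use $1\to\mbox{Inn}(F_n)\to Aut(F_n)\to Out(F_n)\to 1$ with $\mbox{Inn}(F_n)\cong F_n$: if $A$ is abelian then $A\cap\mbox{Inn}(F_n)$ is an abelian subgroup of a free group, hence cyclic, while the image of $A$ in $Out(F_n)$ is abelian and so finitely generated by the small subgroups property established above for $Out(F_n)$; thus $A$ is cyclic-by-(finitely generated abelian), hence finitely generated. For $G=F_n\rtimes_\alpha\Z$ the sequence $1\to F_n\to G\to\Z\to 1$ shows $A$ is cyclic-by-cyclic, so finitely generated (indeed free abelian of rank at most $2$).

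For undistortion I would verify that each of $Aut(F_n)$ and $G$ satisfies the hypothesis of Theorem \ref{trids}: a uniform $c>0$ with $\tau(g)\geq c$ for every infinite-order $g$. Translation length does not increase under a quotient homomorphism (neither does word length), so the projection $p\colon Aut(F_n)\to Out(F_n)$ gives $\tau_{Aut}(g)\geq\tau_{Out}(p(g))$, and $G\to\Z$ gives $\tau_G(g)\geq 1$ whenever $g\notin F_n$. This settles every $g$ whose image in the quotient has infinite order, using the lower bound of Alibegovic \cite{alib} for $Out(F_n)$ in the first case. What remains are the elements mapping to torsion: in $Aut(F_n)$ a bounded power $g^k=\iota_w$ lands in $\mbox{Inn}(F_n)$ (orders of torsion in $Out(F_n)$ are bounded as it has finite virtual cohomological dimension), and in $G$ the leftover infinite-order elements are exactly the nontrivial $g\in F_n$. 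In both cases I am reduced to a single statement: a cyclic subgroup generated by an inner automorphism is undistorted in $Aut(F_n)$, uniformly, i.e. $\tau_{Aut}(\iota_w)\geq c'>0$ for all $w\neq 1$. For $G$ I would transport this across the homomorphism $\Phi\colon F_n\rtimes_\alpha\Z\to Aut(F_n)$, $w\mapsto\iota_w$, $t\mapsto\alpha$ (a homomorphism since $\alpha\iota_w\alpha^{-1}=\iota_{\alpha(w)}$), giving $\tau_G(g)\geq C^{-1}\tau_{Aut}(\iota_g)$ for $g\in F_n$. Granting the reduced statement, the uniform bound holds for both groups and Theorem \ref{trids} yields undistortion of every finitely generated abelian subgroup.

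The main obstacle is precisely this reduced statement, that $\mbox{Inn}(F_n)\cong F_n$ is quasi-isometrically embedded in $Aut(F_n)$; note that the naive estimate $l_{Aut}(\iota_w)\gtrsim\log|w|$, coming from how $\iota_w$ stretches a fixed basis, is far too weak, since it does not even force $\tau_{Aut}(\iota_w)>0$. I would instead obtain it from the standard embedding $Aut(F_n)\hookrightarrow Out(F_{n+1})$ (extend each automorphism to fix an extra generator $x_{n+1}$; this is injective into $Out(F_{n+1})$ because only the identity can fix $x_{n+1}$ while becoming inner), which is known to be a quasi-isometric embedding. Then $\iota_w$ maps to the outer class of the partial conjugation $x_i\mapsto wx_iw^{-1}$, $x_{n+1}\mapsto x_{n+1}$, an infinite-order element of $Out(F_{n+1})$ whose cyclic subgroup is undistorted there with Alibegovic's uniform constant $c_{n+1}$; pulling back through the quasi-isometric embedding gives the required uniform $c'>0$. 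In fact this embedding also yields the cleanest route to the whole $Aut(F_n)$ statement at once: an abelian $A\leq Aut(F_n)$ has finitely generated, undistorted image in $Out(F_{n+1})$, and both properties pull back. The one external input I rely on, and where the genuine difficulty sits, is the quasi-isometric quality of $Aut(F_n)\hookrightarrow Out(F_{n+1})$.
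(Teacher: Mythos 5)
Your finite generation argument is fine, and your reduction of undistortion to a uniform bound $\tau_{Aut}(\iota_w)\geq c'>0$ over nontrivial inner automorphisms (plus its transport to $F_n\rtimes_\alpha\Z$ via $\Phi$) is sound and close in spirit to the paper, which also works through the Magnus embedding $Aut(F_n)\hookrightarrow Out(F_{n+1})$ and Alibegovic's uniform constant. The genuine problem is the step you yourself flag as the crux: you assume that $Aut(F_n)\hookrightarrow Out(F_{n+1})$ is a quasi-isometric embedding. That is a deep theorem in its own right (of the same order of difficulty as the Handel--Mosher Lipschitz retraction results) and is not available anywhere in this paper, so as written your proof is conditional on an external input far heavier than the statement being proved.

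The observation you are missing is that this input is not needed, because you only ever use the inequality in the trivial direction. If $S\leq H\leq G$ are finitely generated and a finite generating set of $H$ is extended to one of $G$, then $l_G\leq l_H$ on $H$, hence also $\tau_G\leq\tau_H$ on $H$; so every lower bound established in the big group $G$ restricts for free to the intermediate group $H$. In particular undistortion of $S$ in $G$ implies undistortion of $S$ in $H$ with no hypothesis whatsoever on how $H$ sits inside $G$; quasi-isometric embeddedness of $H$ in $G$ would only be needed for the reverse transfer. This is exactly the paper's proof: an abelian subgroup $A\leq Aut(F_n)\leq Out(F_{n+1})$ is finitely generated and undistorted in $Out(F_{n+1})$ by the results already established for $Out$, hence undistorted in $Aut(F_n)$. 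The same remark repairs your reduced statement directly: the image of $\iota_w$, $w\neq 1$, in $Out(F_{n+1})$ is a partial conjugation of infinite order, so $\tau_{Aut}(\iota_w)\geq\tau_{Out(F_{n+1})}(\iota_w)\geq c_{n+1}>0$ by the bound of \cite{alib}, with no distortion control on $Aut(F_n)$ at all. For the free by cyclic case the paper splits on the order of $[\alpha]$ in $Out(F_n)$ rather than on the quotient $G\to\Z$: if $[\alpha]$ has infinite order then your map $\Phi$ is injective, so $A\leq G\leq Aut(F_n)\leq Out(F_{n+1})$ and the same downward transfer applies; if $[\alpha]$ has finite order then $G$ is virtually $F_n\times\Z$, whose abelian subgroups are plainly finitely generated and undistorted, and undistortion promotes across finite index. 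With these replacements your argument closes up and becomes essentially the paper's proof.
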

\begin{proof}
If $S,H,G$ are all finitely generated groups with $S\leq H\leq G$ and
$S$ is undistorted in $G$
then $S$ is undistorted in $H$ (else extend the generating set of $H$ to
one of $G$, whereupon the distortion persists). The converse also holds
if $H$ has finite index in $G$.
Now an observation dating
back to Magnus is that $Aut(F_n)$ embeds in $Out(F_{n+1})$ by
considering automorphisms of $F_{n+1}$ which fix the last element of the
basis.

Moreover for the free by cyclic group $G$, we have that if $\alpha$
has infinite order in $Out(F_n)$ then $G$ embeds in $Aut(F_n)$. This
can be seen on taking the copy $F_n$ of inner automorphisms in $Aut(F_n)$
and then $G$ is isomorphic to $\langle \alpha,F_n\rangle\leq Aut(F_n)$.
If however $\alpha$ has finite order then $G$ contains the finite index
subgroup $H=F_n\times\Z$ and this certainly has finitely generated and
undistorted abelian subgroups.
\end{proof}

In \cite{hgwsp} various non hyperbolic
free by cyclic groups are shown to
act freely on a CAT(0) cube complex, though this complex might not
be finite dimensional or locally finite. In particular the action
need not be cocompact (unlike the hyperbolic case where they
had already established the existence of geometric actions).
It is pointed out that this implies such groups have undistorted
abelian subgroups, by using axis theorems of Haglund and of Woodhouse
(since here acting freely implies acting properly,
because any compact set contains only finitely many vertices).
From the result above, we have a unified proof for all free by
cyclic groups (though of course this fact is immediate if the group
is word hyperbolic).

Free by cyclic groups are good test cases for NIU-linearity, as they are for
non positive curvature, because their restricted subgroup structure
means that they satisfy all properties in Theorem \ref{know2} (or
indeed in Theorem \ref{know}). In \cite{meoth} we look in more
detail as to which well known abstract groups are NIU-linear. In
particular we show that the famous Gersten free by cyclic group
$F_3\rtimes\Z$ is not NIU-linear. This accords with the fact that
it does not act properly and semisimply on any complete CAT(0)
space, even though it passes all the obstructions we have seen
for non positive curvature.

\Address


\begin{thebibliography}{99}

\bibitem{alib} E.\,Alibegovic,
{\it Translation Lengths in $Out(F_n)$},
Geom. Dedicata {\bf 92} (2002) 87--93.

\bibitem{alsh} R.\,C.\,Alperin and P.\,B.\,Shalen,
{\it Linear groups of finite cohomological dimension},
Invent. Math. {\bf 66} (1982) 89--98. 

\bibitem{afw3m} M.\,Aschenbrenner, S.\,Friedl and H.\,Wilton, 
3-manifold groups. EMS Series of Lectures in Mathematics. European 
Mathematical Society (EMS), Zurich, 2015. 

\bibitem{bfh1} M.\,Bestvina, M.\,Feighn and M.\,Handel,
{\it The Tits alternative for $Out(F_n)$ I: Dynamics of
exponentially growing automorphisms},
Annals of Math. {\bf 151} (2000) 517--623.

\bibitem{bfh2} M.\,Bestvina, M.\,Feighn and M.\,Handel,
{\it The Tits alternative for $Out(F_n)$ II: A Kolchin
type theorem},
Annals of Math. {\bf 156} (2005) 1--59. 

\bibitem{bbud} S.\,J.\,Bigelow and R.\,D.\,Budney,
{\it The mapping class group of a genus two surface is linear},
Algebr. Geom. Topol. {\bf 1} (2001) 699--708.

\bibitem{brima} M.\,R.\,Bridson, 
{\it Finiteness properties for subgroups of GL(n,Z)},
Math. Ann. {\bf 317} (2000) 629--633. 

\bibitem{bri} M.\,R.\,Bridson, 
{\it Semisimple actions of mapping class 
groups on CAT(0) spaces}, Geometry of Riemann surfaces, 1--14, London 
Math. Soc. Lecture Note Ser., 368, Cambridge Univ. Press, Cambridge, 2010.

\bibitem{bdhf} M.\,R.\,Bridson and A.\,Haefliger,
Metric spaces of non-positive curvature,
Springer, 1999.


\bibitem{meoth} J.\,O.\,Button,
{\it Minimal dimension faithful linear representations of common 
finitely presented groups},
\texttt{http://arxiv.org/1610.03712}

\bibitem{conir} G.\,R.\,Conner,
{\it A class of finitely generated groups with irrational translation
numbers},
Arch. Math. {\bf 69} (1997) 265--274.

\bibitem{consol} G.\,R.\,Conner,
{\it Discreteness properties of translation numbers in solvable
groups},
J. Group Theory {\bf 3} (2000) 77--94.

\bibitem{cv} M.\,Culler and K.\,Vogtmann,
{\it Moduli of graphs and automorphisms of free groups},
Invent. Math. {\bf 84} (1986) 91--119.

\bibitem{fp} E.\,Formanek and C.\,Procesi,
{\it The automorphism group of a free group is not linear},
J. Algebra {\bf 149} (1992) 494--499.

\bibitem{grpl} F.\,Grunewald and V.\,Platonov, 
{\it On finite extensions of arithmetic groups},
C. R. Acad. Sci. Paris Sér. I Math. {\bf 325} (1997) 1153--1158. 

\bibitem{hgwsp} M.\,F.\,Hagen and D.\,T.\,Wise,
{\it Cubulating mapping tori of polynomial growth free group
automorphisms},
\texttt{http://arxiv.org/1605.07879}

\bibitem{kaplb} M.\,Kapovich and B.\,Leeb,
{\it Actions of discrete groups on nonpositively curved spaces},
Math. Ann. {\bf 306} (1996) 341--352. 

\bibitem{kork} M.\,Korkmaz, 
{\it On the linearity of certain mapping class groups}, 
Turkish J. Math. {\bf 24} (2000) 367--371.

\bibitem{lmr}  A.\,Lubotzky, S.\,Mozes and M.\,S.\,Raghunathan, 
{\it The word and Riemannian metrics on lattices of semisimple groups},
Inst. Hautes Études Sci. Publ. Math. No. 91 (2000) 5--53. 

\bibitem{nis} V.\,L.\,Nisnevi\u{c},
{\it \"Uber Gruppen, die durch Matrizen \"uber einem kommutativen Feld
isomorph darstellbar sind}, Rec. Math. [Mat. Sbornik] N.S. {\bf 8 (50)}
(1940) 395--403.

\bibitem{sha} P.\,B.\,Shalen, {\it Linear representations of certain
amalgamated products}, J. Pure. Appl. Algebra {\bf 15} (1979) 187--197.

\bibitem{step} J.\,Steprans,
{\it A characterization of free abelian groups},
Proc. Amer. Math. Soc. {\bf 93} (1985) 347--349. 

\bibitem{tts} J.\,Tits,
{\it Free Subgroups in linear groups},
J. Algebra {\bf 20} (1972) 250--270.


\bibitem{wehbk} B.\,A.\,F.\,Wehrfritz:
Infinite linear groups. Ergebnisse der Matematik und ihrer Grenzgebiete,
Band 76. Springer-Verlag, New York-Heidelberg (1973).

\bibitem{wrig} D.\,Wigglesworth,
{\it Distortion for abelian subgroups of $Out(F_n)$},
\texttt{http://arxiv.org/1612.01134}


\end{thebibliography}
\end{document}